 \theoremstyle{plain}
 \newtheorem{theorem}{Theorem}
 \newtheorem{lemma}[theorem]{Lemma}
 \newtheorem{proposition}[theorem]{Proposition}
\newtheorem{remark}[theorem]{Remark}
\DeclareMathAlphabet{\mathbf}{OT1}{cmr}{bx}{it}
\newcommand{\mc}{\mathcal}
\newcommand{\bs}[1]{\boldsymbol #1}
\DeclareMathAlphabet{\mathbf}{OT1}{cmr}{bx}{it}
\newcommand{\argmax}{\operatornamewithlimits{arg\ max}}
\newcommand{\essinf}{\operatornamewithlimits{ess\ inf}}
\newcommand{\ve}{\mathbf e}
\newcommand{\vi}{\mathbf i}
\newcommand{\vj}{\mathbf j}
\newcommand{\vk}{\mathbf k}
\newcommand{\vp}{\mathbf p}
\newcommand{\vx}{\mathbf x}
\newcommand{\vy}{\mathbf y}
\newcommand{\vz}{\mathbf z}
\newcommand{\vrho}{\boldsymbol{\rho}}
\newcommand{\vGamma}{\boldsymbol{\Gamma}}
\newcommand{\bbN}{\mathbb{N}}
\newcommand{\bbR}{\mathbb{R}}
\renewcommand{\d}{\mathrm{d}}
\newcommand{\F}{\mathcal F}
\newcommand{\Cand}{\mathcal C}
\newcommand{\Mark}{\mathcal M}
\newcommand{\Marg}[1]{\mathrm{Marg}(#1)}
\newcommand{\RMarg}[1]{\mathrm{R}(#1)}
\newcommand{\m}{\mathfrak m}
 \newcommand{\rev}[1]{\textcolor{black}{#1}}
 \newcommand{\rewrite}[2]{\textcolor{black}{#2}}
\begin{document}
\title{On the convergence of adaptive stochastic collocation for elliptic partial differential equations with affine diffusion}
\author{Martin Eigel$^1$, Oliver Ernst$^2$, Bj\"orn Sprungk$^3$, Lorenzo Tamellini$^4$\\
{\footnotesize  $^1$Weierstrass Institute, Berlin, Germany}\\
{\footnotesize  $^2$Department of Mathematics, TU Chemnitz, Germany}\\ 
{\footnotesize  $^3$Faculty of Mathematics and Computer Science, TU Bergakademie Freiberg, Germany}\\ 
{\footnotesize  $^4$Istituto di Matematica Applicata e Tecnologie Informatiche ``E. Magenes'', Pavia, Consiglio Nazionale delle Ricerche, Italy} 
}

\maketitle

\begin{abstract}
Convergence of an adaptive collocation method for the parametric stationary diffusion equation with finite-dimensional affine coefficient is shown.
The adaptive algorithm relies on a recently introduced residual-based reliable a posteriori error estimator.
For the convergence proof, a strategy recently used for a stochastic Galerkin method with a hierarchical error estimator is transferred to the collocation setting.
Extensions to other variants of adaptive collocation methods (including the now classical approach proposed in ``T. Gerstner and M. Griebel, Dimension-adaptive tensor-product quadratuture, Computing, 2003'') \rev{are} explored.
\end{abstract}

\paragraph{Keywords.} Random PDEs, parametric PDEs, sparse grids, stochastic collocation, adaptive algorithms, high-dimensional approximation, high-dimensional interpolation

\paragraph{AMS subject classifications:} 
	65D05, 
    65D15, 
    65C30, 
    60H25 
 

\section{Introduction} \label{sec:introduction}

Collocation methods are now a mainstay for solving equations containing high-dimensional parameters such as arise in uncertainty quantification (UQ) analyses of ordinary or partial differential equations (ODE/PDE) with uncertain model coefficients
\cite{MathelinHussani2003TR,XiuHesthaven2005,BabuskaEtAl2007}.
It was realized early on that already moderately high-dimensional problems become tractable only
when the approximations are based on sparse subspaces of the basic tensor product construction 
\cite{NobileEtAL2008a,NobileEtAl2008b,BieriSchwab2009,MaZabaras2009,Bieri2011,BeckEtAl2012}.

Subsequent work established that, under mild conditions, certain classes of random PDEs are tractable even in presence of countably many parameter variables
\cite{cohen.devore.schwab:nterm1,cohen.devore.schwab:nterm2,SchillingsSchwab2013,ChkifaEtAl2014,ChkifaEtAl2014,BachmayrEtAl2017,zech.schwab:convergence,
  HoangSchwab2014,bachmayr:partIIlognormal,chen:adaptive.lognormal,ErnstEtAl2018}.
These results prove that \emph{there exists} a sequence of converging approximation operators (be they of collocation or Galerkin/projection nature) and derive the corresponding convergence rates.
Such sequences of converging approximation operators can be sometimes estimated a priori as in
\cite{zech.schwab:convergence,chen:adaptive.lognormal,ErnstEtAl2018}.
Another possible procedure is to rely instead on a posteriori adaptive strategies:
the details of these strategies vary depending on the type of approximation operators (projection/collocation) and, moreover, these a posteriori adaptive strategies are often based on heuristics known to behave well in practice (even better than the a priori constructions) but for which a proof of convergence is \rev{often} lacking.

For projection approaches, adaptive stochastic Galerkin finite element methods (ASGFEM), which control the discretization of both physical and parametric variables, are well-studied.
The extensive research activity in the last years comprises in particular residual-based error estimators
\cite{EigelEtAl2014,EigelEtAl2015,eigel2016local,eigel2017adaptive} and hierarchical error estimators
\cite{BespalovEtAl2014,BespalovSilvester2016,CrowderEtAl2019,BespalovEtAl2019}.
The setting in these works is similar to the one considered here, i.e., linear elliptic PDEs with affine parametric coefficients.
However, the cited works allow for a countably infinite expansion, which makes an additional dimension adaptivity necessary.
With the employed Legendre chaos discretization for the parameter space, only the margin of an active set of polynomials has to be considered in the error estimator.
The developed error estimators have been shown to be reliable and efficient, which for hierarchical estimators usually requires additional assumptions.
Convergence of an ASGFEM algorithm was first shown in~\cite{EigelEtAl2015} for a residual estimator and, using a different argument, in~\cite{BespalovEtAl2019} for a hierarchical estimator.
A goal-oriented error estimator was presented in~\cite{BespalovEtAl2019a} and the more involved case of nonlinear coefficients and Gaussian parameters has only been considered recently in~\cite{eigel2020adaptive} with a low-rank hierarchical tensor discretization.

On the stochastic collocation side, the current literature discusses quite extensively algorithms for stochastic adaptivity, whereas much less attention has been devoted to (reliable) spatial adaptivity.
To date, most adaptive sparse grid approximation schemes involve some variation of the basic procedure proposed by Gerstner and Griebel in \cite{GerstnerGriebel2003}, see also \cite{Hegland:adaptive}.
This algorithm drives adaptivity in the parameter variables by exploring at each iteration a certain number of \rewrite{admissible}{} sparse subspaces \rev{admissible} to the approximation and then evaluating for each of these an \emph{error indicator}; this requires solving a certain number of PDEs. 
The subspace with the largest error indicator is selected and added to the approximation, and a new set of admissible sparse subspaces for the next enrichment step is generated. 
Several error indicators and variations of the selection strategy have been considered,
see e.g. \cite{klimke:thesis,GriebelKnapek2009,SchillingsSchwab2013,ChkifaEtAl2014,NobileEtAl2016,farcas:adaptivity}.
A crucial point is that these error indicators are \emph{heuristics}.
Conversely, the work \rewrite{by}{} \cite{GuignardNobile2018} \rev{by Guignard and Nobile}
proposes a variation of the Gerstner--Griebel algorithm based on a reliable residual-based error \emph{estimator} which can control adaptivity in both the physical and parametric variables.
Another significant difference compared with typical indicator-based adaptive algorithms
is that the procedure proposed in \cite{GuignardNobile2018} evaluates the error estimator \emph{without solving additional PDEs}.
This allows significant computational savings with relative to the basic Gerstner--Griebel algorithm.
For other works discussing spatial adaptivity in the context of stochastic collocation methods, see \cite{SchiecheLang2014,LangEtAl2019TR}.

\rev{Guignard and Nobile} give no convergence analysis in \cite{GuignardNobile2018} for \rewrite{the}{their} proposed algorithm, and our contribution in this work is to close this gap.
We do this by proving convergence of a slight modification of \rewrite{the algorithm in [33]}{their algorithm (cf. Algorithm \ref{alg:asc}), }
thus establishing \rewrite{the first (to our knowledge)}{a} convergence result for an adaptive sparse collocation method. 
\rev{This result is stated in Theorem \ref{theo:GN_conv}.}
Our convergence analysis is based on a convergence theorem for abstract adaptive approximations (i.e., which covers both projection and collocation approximations, as well as other possible approximation strategies) w.r.t. the parameter variables. 
We derive this theorem by generalizing results given in \cite{BespalovEtAl2019}
on convergence of adaptive stochastic Galerkin methods.
This approach for proving convergence requires that the employed error estimator possesses the property of \emph{reliability}.
In \cite{GuignardNobile2018} \rewrite{the authors}{Guignard and Nobile} already established this property for their error estimator, but only for a specific model problem, namely, an elliptic PDE whose diffusion coefficient depends linearly on a finite number of parameters.
Moreover, we also require the underlying univariate sequence of collocation points to be nested in order that the sparse collocation construction be interpolatory.
Hence, our particular convergence result is also tied to these assumptions on the underlying PDE and collocation points.
However, we believe that the general approach for establishing convergence of adaptive sparse collocation methods presented in this paper might be adapted to more general cases in the future. For instance, upon assuming that the error indicator used in the \rev{basic} Gerstner--Griebel  adaptive algorithm is indeed a reliable error estimator, we are able to prove convergence of this variant of the algorithm as well \rev{(see Theorem \ref{theo:GG_alg})}.
We note that our analysis considers adaptivity in the parameter variables only, i.e., we focus on the semi-discrete setting.
Finally, we mention the simultaneous and independent work \cite{Feischl2020},
which also provides a convergence result (and a convergence rate)
for adaptive stochastic collocation methods applied to an elliptic PDE with diffusion coefficient depending affinely on finitely many random variables. While the overall framework and the focus of that work is similar to ours, some differences are noteworthy: the algorithm for which \cite{Feischl2020} proves convergence is
essentially \rewrite{that}{the one} discussed \rev{by Guignard and Nobile} in \cite{GuignardNobile2018} while we consider a different version and, in addition, we
also provide a convergence proof for the original Gerstner--Griebel variant. 
Furthermore, the line of proof in \cite{Feischl2020}, while similar to the present one, has of course some different technical aspects:
\rev{in particular, our proof is valid for any choice of collocation points over the parameter space, whereas the proof in \cite{Feischl2020}
 assumes that Clenshaw--Curtis collocation points are used when constructing the sparse grid.}

The remainder of this paper is structured as follows. 
Sections \ref{sec:setting} and \ref{sec:collocation} contain preliminary information: in particular, Section~\ref{sec:setting} states the model problem and recalls the results in \cite{BespalovEtAl2019} that will be instrumental for the rest of the work, while
Section~\ref{sec:collocation} gives details on the construction of adaptive sparse grid collocation schemes.
Sections~\ref{sec:convergence} and \ref{sec:proof} contain our main results:
Section~\ref{sec:convergence} contains the statement of the specific adaptive collocation algorithm
\rev{that we consider (i.e., our version of the Guignard--Nobile algorithm, see Algorithm \ref{alg:asc})},
the associated convergence result \rev{(Theorem \ref{theo:GN_conv}), the convergence result of the Gerstner--Griebel Algorithm (Theorem \ref{theo:GG_alg})},
and some discussion on computational aspects, while Section~\ref{sec:proof} contains the proof of the convergence result.
Finally, conclusions and future research directions are outlined in Section~\ref{sec:conclusions}.

\section{Preliminaries} \label{sec:setting}

In this section we specify the model problem under consideration and recall basic properties of its solution.
Furthermore, we discuss general adaptive approximations w.r.t.~the 
parameter variables and state an abstract convergence result which provides the basis of our convergence analysis for adaptive sparse grid collocation.

\subsection{Model Problem}\label{sec:model}

We consider a common model problem arising in uncertainty propagation via random differential equations, i.e., the stationary diffusion equation containing a coefficient function which depends linearly on a high-dimensional parameter.
Specifically, we wish to solve the parametric elliptic boundary value problem 
\begin{subequations} \label{eq:bvp}
\begin{align}
	- \nabla \cdot \left(a(\vy) \nabla u(\vy) \right) &= f, && \text{ on } D\subset \bbR^d\\
	\qquad
	u(\vy) &= 0, && \text{ on } \partial D.
\end{align}
\end{subequations}
The  domain $D \subset \bbR^d$ is assumed to be bounded and Lipschitz, $f\in L^2(D)$ and the coefficient $a(\vy) \in L^\infty(D)$ is given by
\begin{equation}
\label{equ:affine-a}
	a(\vx, \vy)
	= 
	a_0(\vx) + \sum_{m=1}^M a_m(\vx) \ y_m,
	\qquad
	\vy \in \vGamma := \Gamma^M, \; \Gamma := [-1,1],
\end{equation}
where $M\in\bbN$ is a finite number and $a_0, \ldots, a_M \in L^\infty(D)$.
The parametric domain $\vGamma$ is equipped with a uniform product measure $\mu(\d \vy) := \bigotimes_{m=1}^M \frac{\d y_m}{2}$, i.e., the components of $\vy$ can be viewed as i.i.d.~uniform random variables over $\Gamma =[-1,1]$.
Further, we assume that the functions $a_0, \ldots, a_M \in L^\infty(D)$ satisfy the \emph{uniform ellipticity condition}
\begin{equation}\label{equ:UEA}
	\sum_{m=1}^M |a_m(\vx)| \leq a_0(\vx) - r, \qquad \forall x\in D,
\end{equation}
for some $r>0$.
This implies that
\begin{equation}
\label{eq:UEA}
	a_{\min}
	:=
	\min_{\vy \in \vGamma} \;
	\essinf_{\vx \in D} \;
	a(\vx, \vy)
	\geq r > 0.
\end{equation}
We then define the constant
\begin{equation}\label{equ:alpha}
\alpha := 1 - \frac {a_{\min}}{\inf_{x\in D} a_0(\vx)} \in (0,1),
\end{equation}
which will turn out to be important in Theorem~\ref{cor:Bachmayr_finite} below.
Due to the uniform ellipticity assumption, the weak solution $u(\vy)\in \mathcal{H}=H_0^1(D)$
exists for any $\vy \in \vGamma$ and satisfies $u \in C(\vGamma; \mathcal{H})$. 

\paragraph{Polynomial expansions}
In order to approximate the solution $u$ of \eqref{eq:bvp}, or rather the parameter-to-solution map $\vy \mapsto u(\cdot,\vy) \in \mathcal{H}$,
we shall analyze polynomial expansions of $u$ in the parameter $\vy\in \vGamma$, 
\begin{equation}\label{equ:Poly_Exp}
	u(\vx, \vy) 
	=
	\sum_{\vk \in \F} u_\vk(\vx)  P_\vk(\vy), 
	\qquad
	\F := \bbN_0^M, \quad u_\vk \in \mathcal{H},
\end{equation}
where $P_\vk(\vy) = \prod_{m=1}^M P_{k_m}(y_m)$ is a finite product of univariate polynomials $P_{k}\colon \Gamma \to \bbR$ of degree $k$ with $P_0 \equiv 1$.
Two common choices for the basic polynomials $P_k$ are 
\begin{enumerate}
\item
\emph{Taylor polynomials:} $P_\vk(\vy) := \vy^\vk = \prod_{m=1}^M y_m^{k_m}$ where then
\[
	u_\vk(\vx) = t_\vk(\vx) := \frac{1}{\vk!} \partial^{\vk}u(\vx, \boldsymbol 0),
\]

\item
\emph{Legendre polynomials:} $P_\vk(\vy) := L_\vk(\vy) = \prod_{m=1}^M L_{k_m}(y_m)$ with $L_k$ denoting the $k$th $L^2_{\mu_1}$-normalized Legendre polynomial w.r.t.~the uniform distribution $\mu_1(\d x) = \frac{\d y}{2}$ on $\Gamma =[-1,1]$ and
\[
	u_\vk(\vx) := \int_{\vGamma} u(\vx,\vy) L_\vk(\vy)\ \mu(\d \vy).
\]
\end{enumerate}

Since $u \in C(\vGamma; \mathcal{H}) \subset L^2_\mu(\vGamma;\mathcal{H})$
we have that the expansion \eqref{equ:Poly_Exp} using Legendre polynomials converges in $L^2_\mu(\vGamma;\mathcal{H})$.
The following result due to \cite{BachmayrEtAl2017} establishes under suitable assumptions an $\ell^p$-summability
of both Taylor and Legendre coefficients which, for instance,
implies that the Taylor expansion \eqref{equ:Poly_Exp} of $u$ converges in $L^\infty(\vGamma;\mathcal{H})$.


\begin{theorem}[{\cite[Theorem 2.2 \& 3.1, Corollary 2.3 \& 3.2]{BachmayrEtAl2017}}]\label{cor:Bachmayr_finite}
Let the condition \eqref{equ:UEA} for $a$ as in \eqref{equ:affine-a} be satisfied.
Then a unique solution $u$ of the corresponding elliptic problem \eqref{eq:bvp} exists and belongs to $C(\vGamma; \mathcal{H})$.
Moreover, for any $\vrho := (\rho_m)_{m=1}^M$ with $1 < \rho_m < \alpha^{-1}$ with $\alpha$ as in \eqref{equ:alpha} 
\begin{enumerate}
\item
the Taylor coefficients $t_\vk \in \mathcal{H}$ of $u$ satisfy $(\vrho^{\vk}\|t_\vk\|_{\mathcal{H}})_{\vk\in\F} \in \ell^2(\F)$,
		
\item
  and the Legendre coefficients $u_\vk \in \mathcal{H}$ of $u$ satisfy
  $(b^{-1}_\vk \vrho^{\vk} \|u_\vk\|_{\mathcal{H}})_{\vk\in\F} \in \ell^2(\F)$ with $b_\vk := \prod_{m=1}^M\sqrt{1+2k_m}$.
	\end{enumerate}
\end{theorem}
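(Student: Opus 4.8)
The plan is to prove the three assertions in order: well-posedness with parametric continuity, weighted $\ell^2$-summability of the Taylor coefficients, and the analogous statement for the Legendre coefficients. Existence, uniqueness and continuity are the routine part. For each fixed $\vy\in\vGamma$ the bilinear form $(v,w)\mapsto\int_D a(\vy)\,\nabla v\cdot\nabla w\,\d\vx$ is bounded on $\mathcal H\times\mathcal H$ and, by \eqref{eq:UEA}, coercive with constant $a_{\min}\ge r>0$; Lax--Milgram yields a unique $u(\vy)\in\mathcal H$ with $\norm{u(\vy)}{\mathcal H}\le\norm{f}{L^2(D)}/a_{\min}$. Continuity $u\in C(\vGamma;\mathcal H)$ then follows from the (affine, hence Lipschitz) dependence of $a(\vy)$ on $\vy$ together with this uniform resolvent bound, via a standard stability estimate.

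For the Taylor coefficients I would first differentiate the weak formulation. Because $a$ is affine in $\vy$, the only nonvanishing derivatives of $a$ are $\partial_{y_m}a=a_m$, so applying $\partial^{\vk}$ at $\vy=\boldsymbol 0$ and dividing by $\vk!$ produces, for every $\vk\neq\boldsymbol 0$, the recursion
\[
  \int_D a_0\,\nabla t_\vk\cdot\nabla v\,\d\vx
  \;=\;
  -\sum_{m\in\supp\vk}\int_D a_m\,\nabla t_{\vk-\ve_m}\cdot\nabla v\,\d\vx,
  \qquad v\in\mathcal H,
\]
with $t_{\boldsymbol 0}$ solving the nominal problem with coefficient $a_0$. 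To convert this into summability I would test with $v=t_\vk$, multiply by $\vrho^{2\vk}$, and sum over $\vk\in\F$, working throughout in the energy norm $\norm{v}{a_0}^2:=\int_D a_0\,|\nabla v|^2\,\d\vx$. Keeping the spatial integral intact, the coupling terms are dominated pointwise in $\vx$ using the uniform ellipticity $\sum_m|a_m(\vx)|\le a_0(\vx)-a_{\min}$, the factors $\rho_m$ being absorbed against this bound; a Cauchy--Schwarz/Young splitting then yields a self-improving estimate forcing $S:=\sum_{\vk\in\F}\vrho^{2\vk}\norm{t_\vk}{a_0}^2<\infty$, and norm equivalence gives $(\vrho^\vk\norm{t_\vk}{\mathcal H})_\vk\in\ell^2(\F)$. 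Convergence of the Taylor series in $L^\infty(\vGamma;\mathcal H)$ is then immediate, since $\sum_\vk\norm{t_\vk}{\mathcal H}\le S^{1/2}\,(\sum_\vk\vrho^{-2\vk})^{1/2}<\infty$ whenever $\rho_m>1$.

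For the Legendre coefficients I would pass through the holomorphic extension of the solution map. The same ellipticity mechanism keeps the complexified problem well-posed for $\vz$ in a Bernstein polyellipse $\bigotimes_{m}E_{\tilde\rho_m}$ (foci $\pm1$) as long as the $\tilde\rho_m$ are admissible, so $\vy\mapsto u(\vy)$ extends holomorphically and boundedly there. Since $\rho_m<\alpha^{-1}$ is an open condition, one may pick $\tilde\rho_m\in(\rho_m,\alpha^{-1})$; the classical Bernstein estimate for the $L^2_\mu$-normalized Legendre coefficients then gives $\norm{u_\vk}{\mathcal H}\le C\prod_m\sqrt{1+2k_m}\,\tilde\rho_m^{-k_m}$, where the factor $\prod_m\sqrt{1+2k_m}$ is exactly $b_\vk$. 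Hence $b_\vk^{-1}\vrho^\vk\norm{u_\vk}{\mathcal H}\le C\prod_m(\rho_m/\tilde\rho_m)^{k_m}$, a geometrically decaying and therefore $\ell^2(\F)$ sequence. The main obstacle, for both parts, is pinning the admissibility threshold to the sharp value $\alpha^{-1}$: a naive worst-case coercivity (or ``positive real part'') bound only reaches $\sup_D a_0$ in place of $\inf_D a_0$, so the delicate task is to organize the weighted summation --- equivalently, the complex resolvent estimate --- so that the contraction is governed by $\alpha=1-a_{\min}/\inf_D a_0$ and the entire range $\rho_m<\alpha^{-1}$ is attained; this is precisely where the argument of \cite{BachmayrEtAl2017} must be followed with care.
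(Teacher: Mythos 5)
The first thing to note is that the paper does not prove this theorem at all: it is imported verbatim from \cite{BachmayrEtAl2017}, and the paper's only original contribution is the Remark immediately following it, which reduces the finite-$M$ statement to the infinite-dimensional result of \cite{BachmayrEtAl2017} by padding with $a_m \equiv 0$ and arbitrary $\rho_m > 1$ for $m > M$, and then verifying that $1 < \rho_m < \alpha^{-1}$ implies the hypothesis of that result, namely $\bigl\| \sum_m \rho_m |a_m| / a_0 \bigr\|_{C(D)} < 1$. You instead attempt to re-derive the cited theorems from scratch. Your well-posedness argument is fine, and your Taylor strategy (differentiate the weak form to get the recursion for $t_\vk$, test with $t_\vk$, sum with weights $\vrho^{2\vk}$ in the $a_0$-energy norm) is in fact exactly the internal mechanism of \cite{BachmayrEtAl2017}. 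But the decisive step there is not a generic ``Cauchy--Schwarz/Young splitting yields a self-improving estimate'': it is a level-by-level contraction, obtained by applying Young's inequality pointwise in $\vx$ with the weights $\rho_m |a_m| / \sum_j \rho_j |a_j|$, whose contraction factor is precisely $\bigl\| \sum_m \rho_m |a_m| / a_0 \bigr\|_{C(D)} < 1$. You never verify that $\rho_m < \alpha^{-1}$, with $\alpha$ as in \eqref{equ:alpha}, yields this condition from \eqref{equ:UEA} --- and that verification is the \emph{entire} content of the paper's own proof. So the one piece of argument the paper actually supplies is the one piece missing from your proposal.

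For the Legendre part there is a more structural gap: the Bernstein-polyellipse route cannot deliver the stated range. Complexified ellipticity of $a_0 + \sum_m a_m z_m$ on $\bigotimes_m E_{\tilde\rho_m}$ forces the pointwise condition
\[
  \sum_{m=1}^M \frac{\tilde\rho_m + \tilde\rho_m^{-1}}{2}\, |a_m(\vx)| < a_0(\vx),
  \qquad \vx \in D,
\]
i.e., the same weighted condition as above, and the naive bound via \eqref{equ:UEA} then caps the admissible $\tilde\rho_m$ at the threshold governed by $\sup_{\vx} \left(1 - a_{\min}/a_0(\vx)\right)$ --- you correctly sense this in your closing paragraph, but you offer no mechanism to go beyond it. The point of \cite{BachmayrEtAl2017} is exactly that analyticity domains are lossy: their Legendre bounds (Theorem 3.1, with the factors $b_\vk$ emerging naturally) come from a direct recursion/bootstrap on weighted sums of Legendre coefficients, not from ellipse estimates as in \cite{cohen.devore.schwab:nterm1}. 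Your final sentence, that ``the argument of \cite{BachmayrEtAl2017} must be followed with care'' at precisely this point, is an admission that the proposal is not self-contained where it matters; a blind proof must either supply that argument or do what the paper does --- cite the theorem and verify its hypothesis, which is shorter and complete. (Incidentally, the pointwise estimates only ever produce the condition $\bigl\|\sum_m \rho_m|a_m|/a_0\bigr\|_{C(D)}<1$, which for non-constant $a_0$ is governed by $\sup_D a_0$ rather than $\inf_D a_0$; the two coincide under the normalization $a_0 \equiv 1$, and the paper's Remark glosses this distinction, so your instinct that the threshold in \eqref{equ:alpha} is the delicate point is well placed.)
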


\begin{remark}
  The authors of \cite{BachmayrEtAl2017} actually consider the infinite-dimensional noise case, i.e.,
  with $M=\infty$ in \eqref{equ:affine-a}, and prove the results stated in Theorem \ref{cor:Bachmayr_finite} under the assumption that
\[
	\left\| \frac{\sum_{m=1}^\infty \rho_m |a_m|}{a_0}\right\|_{C(D)} < 1,
\]
for a sequence $\vrho := (\rho_m)_{m\geq 1}$ with $\rho_m>1$.
Hence, Theorem \ref{cor:Bachmayr_finite} can be derived easily from this general case by setting $a_m(\vx) \equiv 0$ and $\rho_m > 1$ arbitrarily for $m>M$:
	\begin{align*}
	\left\| \frac{\sum_{m=1}^\infty \rho_m |a_m|}{a_0}\right\|_{C(D)}
	& = \left\| \frac{\sum_{m=1}^M \rho_m |a_m|}{a_0}\right\|_{C(D)} 
	< \alpha^{-1} \left\| \frac{\sum_{m=1}^\infty |a_m|}{a_0}\right\|_{C(D)}
	\leq \alpha^{-1} \ (1 - a_{\min}) = 1.
	\end{align*}
\end{remark}

\subsection{Adaptive Polynomial Approximation} \label{sec:adaptive_algorithms}

Given the decay rate  stated in Theorem \ref{cor:Bachmayr_finite} for the norms of the coefficients $u_\vk$ 
of the expansion \eqref{equ:Poly_Exp}, 
a polynomial approximation of $u$ seems feasible.
To this end, we consider the truncated expansions $u_\Lambda$ based on a finite multi-index set $\Lambda \subset \F$,
\[
	u_\Lambda := 
	S_\Lambda u
	=
	\sum_{\vk \in \Lambda} \widehat{u}_\vk P_\vk,
	\qquad
	 \widehat{u}_\vk \in \mc H,
\]
where $S_\Lambda$ denotes a suitable \emph{approximation operator} and $\widehat{u}_\vk$ are approximations to the true coefficients ${u}_\vk$ of $u$ (cf.\ \eqref{equ:Poly_Exp}).
For instance, $S_\Lambda$ could be the operator associated with a Galerkin approach for approximating $u$ using the finite-dimensional polynomial space
\[
	\mc P_\Lambda(\vGamma) := \mathrm{span}\left\{ P_\vk \colon \vk \in \Lambda\right\},
\]
or, as we in our case later, the operator associated to sparse grid collocation based on $\Lambda$.
At this point we do not need to further specify $S_\Lambda$.

We consider in particular an \emph{adaptive approach} to compute such polynomial approximations $u_\Lambda$.
More specifically, starting from an initial set $\Lambda_0 \subset \F$ we construct nested multiindex sets $\Lambda_n \subset \Lambda_{n+1}$, $n\in\bbN_0$,
and compute the associated polynomial approximations $u_n := S_{\Lambda_n} u$
by the generic adaptive algorithm detailed in Algorithm \ref{alg:aaa}.
\begin{algorithm}[t]
  \caption{Generic adaptive algorithm}\label{alg:aaa}
  \begin{algorithmic}
    \STATE $\Lambda_0 = \{\boldsymbol 0\}$
    \STATE $u_0 := S_{\Lambda_0}u$
    \FOR{ $n\in\bbN_0$} 
    \STATE Choose a \emph{candidate set} of multi-indices 
           $\Cand_n \subset \F \setminus \Lambda_n$ for enriching $\Lambda_n$
    \STATE Evaluate estimates of the error contribution on the candidate set: 
    \[
      \eta_n(\vk) = \eta(\vk, u_n), \quad \vk \in \Cand_n
    \]
    \STATE Determine \emph{marked indices} $\Mark_n \subset \Cand_n$
    (according to a given marking strategy based on $\eta_n(\vk)$);
    \STATE Set $\Lambda_{n+1} := \Lambda_n \cup \Mark_n$
    \STATE Set $u_{n+1} := S_{\Lambda_{n+1}}u$.
    \ENDFOR
  \end{algorithmic}
\end{algorithm}
Again, we do not further specify how to compute the estimates  $\eta_n(\vk) = \eta(\vk, u_n)$ at this point.
Instead, we provide a fairly general convergence theorem for Algorithm \ref{alg:aaa},
stating conditions on $\eta_n(\vk)$ that guarantee convergence of the algorithm.

The following theorem draws upon the work \cite{BespalovEtAl2019} on the convergence of adaptive stochastic Galerkin methods.
Specifically, it is a compact summary of a way of proving for convergence for stochastic Galerkin outlined in detail in \cite[Section 6 and 7]{BespalovEtAl2019},
slightly modified to fit the application to adaptive sparse grid collocation.
We state the theorem here and provide the proof at the end of the section.

\begin{theorem}[cf. \cite{BespalovEtAl2019}]\label{theo:abstract_adapt}
Let $u_n$ denote the approximations constructed via Algorithm \ref{alg:aaa}. 
Assume that
\begin{enumerate}
\item
the total error estimator $\eta_n := \sum_{\vk \in \Cand_n} \eta_n(\vk)$ is reliable, i.e., there exists a constant $C<\infty$ independent of $n$ such that
\[
	\|u - u_n\|
	\leq C \eta_n,
\]
where $\|\cdot\|$ denotes a suitable norm for functions $v\colon \vGamma \to \mc H$,

\item
there exists a sequence of non-negative numbers $(\eta_\infty(\vk))_{\vk \in \F} \in \ell^1(\F)$ such that for $( \widehat \eta_n(\vk) )_{\vk \in \F}$ with $\widehat \eta_n(\vk) := \eta_n(\vk)$ for $\vk \in \Cand_n \cup \Lambda_n$ and $\widehat \eta_n(\vk) = 0$ otherwise, we have
\[
	\lim_{n\to \infty} \|\eta_\infty  - \widehat  \eta_n\|_{\ell^1} = 0,
\]

\item
there exists a constant $c > 0$ independent of $n$ such that for all $\vk \in \Cand_n\setminus \Mark_n$ we have
\[
	\eta_{n}(\vk)
	\leq
	c \sum_{\vi \in \Mark_n} \eta_n(\vi).
\]
\end{enumerate}
From these assumptions it follows that
\[
	\lim_{n\to\infty} \|u - u_n\| = 0.
\]
\end{theorem}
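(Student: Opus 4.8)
The plan is to exploit reliability (assumption~1) to reduce everything to showing that the total estimator vanishes, and then to extract this from the combinatorial structure of the marking together with the $\ell^1$-convergence in assumption~2. Concretely, since $\|u-u_n\|\le C\eta_n$, it suffices to prove that $\eta_n=\sum_{\vk\in\Cand_n}\eta_n(\vk)\to0$. I would introduce $S_n:=\sum_{\vi\in\Mark_n}\eta_n(\vi)$ for the marked mass and $M_n:=\max_{\vk\in\Cand_n}\eta_n(\vk)$ for the largest indicator, and aim to show first $S_n\to0$, then $M_n\to0$, and finally $\eta_n\to0$.

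The heart of the argument, and the step I expect to be the main obstacle, is showing $S_n\to0$. The key structural observation is that the marked sets are pairwise disjoint: since $\Mark_n\subset\Cand_n\subset\F\setminus\Lambda_n$ while $\Mark_m\subset\Lambda_{m+1}\subset\Lambda_n$ for $m<n$, no index can be marked twice. Assuming toward a contradiction that $S_n\ge\delta'>0$ along an infinite set of indices, I would use assumption~2 to fix a finite set $F\subset\F$ carrying all but $\delta'/4$ of the $\ell^1$-mass of $\eta_\infty$, so that for $n$ large the tail $\sum_{\vk\notin F}\widehat\eta_n(\vk)$ lies below $\delta'/2$. Splitting $S_n$ over $\Mark_n\cap F$ and $\Mark_n\setminus F$ then forces $\sum_{\vk\in\Mark_n\cap F}\widehat\eta_n(\vk)>\delta'/2$, and in particular $\Mark_n\cap F\neq\emptyset$, for infinitely many $n$. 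This contradicts disjointness, because only finitely many pairwise disjoint sets can meet the finite set $F$. The delicate point is the interplay between the uniform tail control supplied by $\ell^1$-convergence and the ``each index marked at most once'' combinatorics; arranging the quantifiers in the right order is what makes the contradiction close.

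Once $S_n\to0$, the remaining steps are routine. Assumption~3 bounds every unmarked indicator by $c\,S_n$, while every marked indicator is trivially at most $S_n$ (the indicators being non-negative), so $M_n\le\max(1,c)\,S_n\to0$. Finally, for $\epsilon>0$ I would again choose a finite $F_\epsilon$ with $\sum_{\vk\notin F_\epsilon}\widehat\eta_n(\vk)<2\epsilon$ for $n$ large, and estimate $\eta_n\le\sum_{\vk\in\Cand_n\cap F_\epsilon}\eta_n(\vk)+\sum_{\vk\in\Cand_n\setminus F_\epsilon}\eta_n(\vk)\le|F_\epsilon|\,M_n+2\epsilon$. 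Letting $n\to\infty$ yields $\limsup_{n}\eta_n\le2\epsilon$, and since $\epsilon$ is arbitrary, $\eta_n\to0$. Reliability then gives $\|u-u_n\|\to0$, as claimed.
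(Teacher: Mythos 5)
Your proof is correct, but it takes a genuinely different route from the paper's. The paper first uses the triangle inequality to replace $\eta_n(\vk)$ by the limit values $\eta_\infty(\vk)$ on $\Cand_n$, and then invokes the abstract Lemma~\ref{lem:tech} (imported from the stochastic Galerkin analysis, stated for general $\ell^p$ and a general continuous $g$ with $g(0)=0$) with $\mc J_n = \Lambda_n$ and $g(s)=cs$, concluding $\sum_{\vk\notin\Lambda_n}\eta_\infty(\vk)\to 0$, which dominates $\sum_{\vk\in\Cand_n}\eta_\infty(\vk)$. You instead work directly with $\eta_n$ and never sum the limit sequence over the candidate set: the pairwise disjointness of the marked sets $\Mark_n$ (each index is marked at most once, since $\Mark_m\subset\Lambda_n$ for $m<n$ while $\Cand_n\cap\Lambda_n=\emptyset$), combined with the uniform $\ell^1$ tail control from assumption~2, forces the marked mass $S_n\to 0$ via your finite-set contradiction; assumption~3 then transfers this to $M_n\le\max(1,c)\,S_n\to 0$, and one more tail split gives $\eta_n\le|F_\epsilon|\,M_n+2\epsilon$. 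The quantifier bookkeeping checks out: the estimate $\sum_{\vk\notin F}\widehat\eta_n(\vk)\le\sum_{\vk\notin F}\eta_\infty(\vk)+\|\widehat\eta_n-\eta_\infty\|_{\ell^1}$ is exactly what closes the contradiction, your bounds only ever use $\widehat\eta_n=\eta_n$ on subsets of $\Cand_n$ where this identity holds, and degenerate cases such as $\Mark_n=\emptyset$ cause no trouble since then $S_n=0$. In effect you have inlined, for the special case $p=1$ and linear $g$, the content of Lemma~\ref{lem:tech}, whose proof in the cited reference rests on the same disjointness-plus-tail mechanism. What the paper's modular route buys is generality---the lemma covers $\ell^p$ sums and nonlinear $g$, as needed for, e.g., D\"orfler-type marking in the Galerkin setting---together with the slightly stronger intermediate statement $\sum_{\vk\notin\Lambda_n}\eta_\infty(\vk)\to 0$; what your route buys is a self-contained, elementary argument that makes the convergence mechanism transparent within the theorem's stated hypotheses, without appeal to any external lemma.
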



\begin{remark}\label{rem:abstract_theorem}
Before we prove the theorem, we comment on the second and third assumption: 
\begin{enumerate}
\item
The third assumption is generally easily to satisfy.
For instance, simply choosing $\Mark_n := \argmax_{k\in \Cand_n} \eta_n(\vk)$ satisfies the assumption with $c=1$. 

\item
For sparse grid collocation, the second assumption turns out to be the most difficult to verify.
Moreover, it is probably the most cryptic assumption of the theorem.
It can usually be verified as follows: 
assuming the sequence $u_n$ has a limit $u_\infty$ with corresponding error estimators $\eta_\infty(\vk) := \eta(\vk, u_\infty)$,
conclude from $u_n\to u_\infty$ that $\|\eta_\infty  - \widehat  \eta_n\|_{\ell^1}\to 0$ by exploiting continuity properties
of the error estimator $\eta(\vk, u_n)$ w.r.t.~$u_n$.
Note that in principle $u_\infty$ is just the limit of $u_n$, but does not necessarily coincide with the actual solution
of the PDE (\ref{eq:bvp}). The fact that $u_\infty=u$ is the asesrtion of the theorem.

\item
As we will see in the proof of Theorem \ref{theo:abstract_adapt}, the second assumption represents some kind of saturation of the reliable error estimators $\eta_n$: since $\|\eta_\infty  - \widehat  \eta_n\|_{\ell^1}\to 0$ we have that 
\[
	\eta_n 
	\leq 
	\sum_{\vk \in \Cand_n} \eta_\infty(\vk) 
	  + \sum_{\vk \in \Cand_n} |\eta_n(\vk) - \eta_\infty(\vk)|
	\leq 
	\sum_{\vk \in \Cand_n} \eta_\infty(\vk)  
	  +  \| \widehat \eta_n - \eta_\infty\|_{\ell^1}
\]
converges to zero if $\sum_{\vk \in \Cand_n} \eta_\infty(\vk)$ does.
Since $\| \widehat \eta_n - \eta_\infty\|_{\ell^1} < \infty$ we can expect $\eta_\infty(\vk)$ to decay for large multi-indices $\vk$.
Thus, if $\Cand_n$ tends to include increasingly larger multi-indices $\vk$, then $\sum_{\vk \in \Cand_n} \eta_\infty(\vk)$ should decay to zero.
This will be made rigorous in the subsequent proof. 
\end{enumerate}
\end{remark}
 
\noindent The proof of Theorem \ref{theo:abstract_adapt} employs the following abstract lemma which was shown for the case $p=2$ in \cite[Lemma 15]{BespalovEtAl2019}.
Since their proof can be generalized to arbitrary $1\leq p < \infty$ without any significant modification we simply state the result and refer to \cite[Lemma 15]{BespalovEtAl2019} for a detailed proof.

\begin{lemma}[{cf. \cite[Lemma 15]{BespalovEtAl2019}}]\label{lem:tech}
Let $\vz = (z_k)_{k\in\bbN} \in \ell^p(\bbN)$, $p\in[1,\infty)$, and $\vz^{(n)} = (z^{(n)}_k)_{k\in\bbN} \in \ell^p(\bbN)$, $n\in\bbN_0$, be sequences of non-negative numbers satisfying $\lim_{n\to \infty} \|\vz - \vz^{(n)}\|_{\ell^p}= 0$.
Assume further that there exists a continuous function $g\colon [0,\infty) \to [0,\infty)$ with $g(0)=0$  and a sequence of nested subsets $\mc J_n \subset \bbN$, i.e., $\mc J_n \subset \mc J_{n+1}$, such that 
\[
	\forall n \in\bbN_0 \ \forall k \notin \mc J_{n+1}\colon \ 
	 z_k^{(n)} 
	 \leq 
	 g \left( \sum_{i \in \mc J_{n+1}\setminus \mc J_n} \left( z_i^{(n)} \right)^p \right).
\]
Then $\lim_{n\to\infty} \sum_{k \notin \mc J_n} z_k^p = 0$.
\end{lemma}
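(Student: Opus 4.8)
The plan is to reduce the statement about the fixed limit sequence $\vz$ to a statement about the disjoint \emph{increments} of the exhaustion $\mc J_n$, and then transfer the domination hypothesis — which is phrased in terms of the moving approximants $\vz^{(n)}$ — back to $\vz$ using the convergence $\vz^{(n)}\to\vz$ in $\ell^p$. The central structural observation is that, since the $\mc J_n$ are nested, the sets $A_n := \mc J_{n+1}\setminus \mc J_n$ are pairwise disjoint. I will \emph{not} assume that $\bigcup_n \mc J_n$ equals all of $\bbN$; instead I will show that the indices never captured by the exhaustion carry no mass, i.e.\ $z_k = 0$ for every $k\notin \mc J_\infty := \bigcup_{n} \mc J_n$.

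First I would show that $s_n := \sum_{i\in A_n}(z_i^{(n)})^p \to 0$. By Minkowski's inequality (valid since $p\geq 1$) applied in $\ell^p(A_n)$,
\[
	s_n^{1/p} \leq \Big(\sum_{i\in A_n} z_i^p\Big)^{1/p} + \Big(\sum_{i\in A_n}|z_i^{(n)} - z_i|^p\Big)^{1/p} \leq \Big(\sum_{i\in A_n} z_i^p\Big)^{1/p} + \|\vz^{(n)} - \vz\|_{\ell^p}.
\]
The second summand tends to zero by hypothesis. For the first, disjointness of the $A_n$ gives $\sum_{n}\big(\sum_{i\in A_n} z_i^p\big) = \sum_{i\in \bigcup_n A_n} z_i^p \leq \|\vz\|_{\ell^p}^p < \infty$, so the numbers $\sum_{i\in A_n} z_i^p$, being the terms of a convergent series of non-negatives, tend to zero. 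Hence $s_n\to 0$, and by continuity of $g$ together with $g(0)=0$ we obtain $g(s_n)\to 0$.

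Next I would invoke the domination hypothesis precisely at the never-captured indices. For $k\notin \mc J_\infty$ we have $k\notin\mc J_{n+1}$ for every $n$, so $z_k^{(n)} \leq g(s_n) \to 0$. On the other hand $\ell^p$-convergence forces the pointwise bound $|z_k^{(n)} - z_k| \leq \|\vz^{(n)} - \vz\|_{\ell^p}\to 0$, hence $z_k^{(n)}\to z_k$, whence $z_k = 0$ for all $k\notin \mc J_\infty$. Consequently
\[
	\sum_{k\notin \mc J_n} z_k^p = \sum_{k\in \mc J_\infty\setminus\mc J_n} z_k^p + \sum_{k\notin \mc J_\infty} z_k^p = \sum_{k\in \mc J_\infty\setminus\mc J_n} z_k^p,
\]
and the remaining sum is the tail of the convergent series $\sum_{k\in \mc J_\infty} z_k^p$ over the exhausting family $\mc J_n \uparrow \mc J_\infty$, so it tends to zero. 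This yields the claim.

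The main obstacle is conceptual rather than computational: the conclusion concerns the fixed sequence $\vz$ while the hypothesis is stated for the moving sequences $\vz^{(n)}$, and a priori nothing guarantees that $\mc J_n$ exhausts $\bbN$. Both difficulties are resolved by the same mechanism — the $\ell^p$-convergence, used once to pass from $\sum_{i\in A_n} z_i^p$ to $s_n$ and once to pass from $z_k^{(n)}\to 0$ to $z_k = 0$ — together with the disjointness of the increments $A_n$, which is the only place where the nestedness of the $\mc J_n$ is genuinely used.
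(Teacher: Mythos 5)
Your proof is correct, and it is essentially the argument the paper relies on: the paper gives no in-text proof, deferring to \cite[Lemma 15]{BespalovEtAl2019} with the remark that the $p=2$ case extends to $1\leq p<\infty$ without modification, and that proof follows exactly your structure --- disjointness of the increments $\mc J_{n+1}\setminus\mc J_n$ to show $s_n\to 0$ and hence $g(s_n)\to 0$, transfer via the $\ell^p$-convergence $\vz^{(n)}\to\vz$ to conclude $z_k=0$ for every $k\notin\bigcup_n \mc J_n$, and finally a tail estimate for the convergent series $\sum_{k}z_k^p$ along the exhaustion $\mc J_n$. Your explicit treatment of the possibility that $\bigcup_n \mc J_n\neq \bbN$ is exactly the right point of care, and the rest (Minkowski on $\ell^p(A_n)$, the pointwise bound $|z_k^{(n)}-z_k|\leq\|\vz^{(n)}-\vz\|_{\ell^p}$, dominated convergence for the tail) is sound.
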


\begin{proof}[\textbf{Proof of Theorem \ref{theo:abstract_adapt}}]
Since the error estimator is reliable, we only need to show that
\[
	\lim_{n\to \infty} \eta_n
	=
	\lim_{n\to \infty} \sum_{\vk \in \Cand_n} \eta_n(\vk)
	=
	0. 
\]
Due to 
\begin{align*}
	\sum_{\vk \in \Cand_n} \eta_n(\vk)
	& \leq \sum_{\vk \in \Cand_n} \eta_\infty(\vk) + \sum_{\vk \in \Cand_n} |\eta_n(\vk) - \eta_\infty(\vk)|
	\leq \sum_{\vk \in \Cand_n} \eta_\infty(\vk)  +  \| \widehat \eta_n - \eta_\infty\|_{\ell^1(\bbN)},
\end{align*}
as well as $\| \widehat \eta_n - \eta_\infty\|_{\ell^1}\to 0$ by assumption, the statement of the theorem follows if
\[
	\lim_{n\to\infty} \sum_{\vk \in \Cand_n} \eta_\infty(\vk) = 0.
\]
In order to show this we apply Lemma \ref{lem:tech} as follows:
we identify the countable set $\F$ with $\bbN$, $\eta_\infty$ with $\vz$ and $\widehat \eta_n$ with $\vz^{(n)}$.
Recall that by assumption 
$\| \widehat{\eta}_n - \eta_\infty \|_{\ell^1} \rightarrow 0$.
Thus, the first assumption of Lemma \ref{lem:tech} is satisfied.
Moreover, we identify the $\Lambda_n\subset \F$ with $\mc J_n\subset \bbN$.
These sets are nested and $\mc J_{n+1}\setminus \mc J_n$ corresponds to $\Mark_n$.
By our third assumption and the construction of $\widehat \eta_n$ there holds for each $n\in\bbN$
\[
	\widehat \eta_n(\vk)
	\leq
	c \sum_{\vi \in \Mark_n} \widehat \eta_n(\vi)
	\qquad
	\forall \vk \notin \Lambda_{n+1}, 
\]
since $\widehat \eta_n(\vk) = 0$ for $\vk \notin \Cand_n \cup \Lambda_n$ and
$(\Cand_n \cup \Lambda_n)\setminus\Lambda_{n+1} = \Cand_n\setminus \Mark_n$.
Thus, the second assumption of Lemma \ref{lem:tech} is also satisfied with $g(s) = cs$.
Hence, we can apply Lemma \ref{lem:tech} to $\vz \simeq \eta_\infty$ and $\vz_n \simeq \widehat \eta_n$ and obtain that
\[
	\lim_{n\to\infty} \sum_{\vk \notin \Lambda_{n}} \eta_\infty(\vk) = 0,
\]
which by $\sum_{\vk \in \Cand_n} \eta_\infty(\vk) \leq \sum_{\vk \notin \Lambda_{n}} \eta_\infty(\vk)$ concludes the proof.
\end{proof}


\section{Adaptive Sparse Collocation}\label{sec:collocation}

We now introduce the sparse collocation approach and discuss how adaptive sparse grid algorithms can be derived from the abstract Algorithm \ref{alg:aaa}.
In particular, we show how to obtain the classical a-posterior adaptive algorithm
\rewrite{from}{by Gerstner and Griebel} \cite{GerstnerGriebel2003} based on heuristic error \emph{indicators}
(as opposed to reliable error \emph{estimators}, as \rev{proposed by Guignard and Nobile} in \cite{GuignardNobile2018}).
As already discussed in the introduction, changing from indicators to estimators is key to proving convergence.
Our version of the estimator-based algorithm \rev{by Guignard and Nobile} and its convergence are then discussed in the subsequent sections.

\paragraph{Univariate interpolation nodes}

The first ingredient for any sparse grid construction is the choice of the underlying univariate sequences of collocation points. 
In this work, we consider nested point sequences:
Let $(y_{(i)})_{i\in\bbN_0} \subset [-1,1]$ denote a sequence of univariate interpolation nodes and define the associated node sets
\begin{equation}\label{eq:node_sets}
	\mc Y_k := \{y_{(i)}\colon i = 0,\ldots, \m(k) \} \subset \Gamma,
	\qquad
	k\in\bbN_0,
\end{equation}
where $\m \colon \bbN_0 \to \bbN_0$ denotes the \emph{growth function} of the sets $\mc Y_k$, i.e., $|\mc Y_k| = 1 + \m(k)$.
We assume throughout that $\m(0) = 0$ and that $\m$ is strictly increasing.
Thus, we exclude \emph{delayed sequences} of node sets with $\mc Y_k = \mc Y_{k+1}$ for certain $k$ as sometimes employed for sparse grid methods, see \cite{Petras2003}.
As an immediate consequence of our assumption, we have $\m(k) \geq k$ and $|\mc Y_k| \geq k+1$.
We later also use the \emph{generalized inverse} of the growth function given for $i \in \bbN_0$ by
\begin{equation}\label{eq:m_inverse}
	\m^{-1}(i) 
	:= 
	\min\{ k \in \bbN_0 : i \leq \m(k)\} \leq i,
\end{equation}
which gives the index of the first node set $\mc Y_k$ which contains $y_{(i)}$.
A particularly convenient construction of such nested nodes is provided by Leja points.
Leja sequences on $\Gamma=[-1,1]$ are defined recursively by first choosing $y_{(0)} \in \Gamma$ and then setting
\begin{equation} \label{eq:leja.def}
    y_{(k)} = \argmax_{y \in \Gamma} \prod_{i=0}^{k-1} |y-y_{(i)}|,  
    \qquad k \in \bbN_0,
\end{equation}
see e.g. \cite{Chkifa2013,ChkifaEtAl2014,Chkifa2015,SchillingsSchwab2013,nobile.etal:leja-points} and the references therein.
The standard choice is to set $y_{(0)}=-1$; the rule \eqref{eq:leja.def} then leads to
\[
  y_{(0)}=-1,\quad 
  y_{(1)}=1,\quad
  y_{(2)}=0, \quad
  y_{(3)} \approx -0.57735,\quad
  y_{(4)} \approx 0.65871, ~\ldots ~.
\]
Another common sequence, referred to as \emph{R-Leja} (real Leja) points, is obtained by 
carrying out the Leja construction on the upper unit circle in the complex plane in place of $\Gamma=[-1,1]$ and then projecting the sequence thus obtained onto the real line.
This results in (see e.g. \cite{Chkifa2013} for a proof):
\begin{gather*}
   y_{(i)} = \cos \phi_{(i)}, \quad i \in \bbN_0, \\
  \phi_{(0)}=0, \quad \phi_{(1)}=\pi, \quad \phi_{(2)}=\pi/2, \quad 
  \phi_{(2n+1)} = \frac{\phi_{(n+2)}}{2}, \quad \phi_{(2n+2)} = \phi_{(2n+2)} +\pi.  
\end{gather*}
For both Leja and R-Leja nodes, we may utilize any strictly increasing growth function $\m$ with $\m(0)=0$ to construct nested node sets $\mc Y_k\subset\mc Y_{k+1}$ as in \eqref{eq:node_sets}. 
The most common choice uses sets growing in unit increments, i.e., $\m(i) = i$.

Besides the Leja construction, \emph{Clenshaw--Curtis} nodes are also popular collocation points. 
Here, the node sets $\mc Y_k$ consist of the extrema of Chebyshev polynominals
\[
	\mc Y_0 = \{0\}, \qquad
	\mc Y_k = \left\{ - \cos\left( \pi i \ / \m(k) \right) \colon i = 0,\ldots,\m(k) \right\}, \quad k \in \bbN.
\]
Nestedness of the $\mc Y_k$ is then achieved by the \emph{doubling rule} $\m(k) = 2^{k}$ for $k\geq 1$.
The corresponding sequence of nodes $(y_{(i)})_{i\in\bbN_0}$ is given, suitably arranged, by \begin{align*}
	&y_{(0)} = 0,\\
	&y_{(1)} = - \cos\left( 0 \right),
	&&y_{(2)} = - \cos\left( \pi \right),\\
	&y_{(3)} = - \cos\left( 1/4 \pi\right),
	&&y_{(4)} = - \cos\left( 3/4 \pi \right),
	\ldots
\end{align*}

\paragraph{Sparse collocation}

We consider \emph{hierarchical} sparse collocation based on nested sequences of node sets $\mc Y_k$ as introduced above.
Let $\mc P_k(\Gamma)$ denote the set of univariate polynomials on $\Gamma$ of degree at most $k\in\bbN_0$.
We can then define for any Hilbert space-valued continuous function $f \colon \Gamma \to \mc H$ two objects:
\begin{itemize}
\item 
a Lagrange interpolant $\mc I_k \colon  C(\Gamma; \mc H) \to  \mc P_{\m(k)}(\Gamma; \mc H)$,
\item 
a univariate \emph{detail operator} 
$\Delta_{k} \colon C(\Gamma; \mc H) \to \mc P_{\m(k)}(\Gamma; \mc H)$,
\[
	\Delta_0 = \mc I_0, 
	\qquad 
	\Delta_{k} := \mc I_{k} - \mc I_{k-1},
	\quad
	k\in\bbN.
\]
\end{itemize}
With these definitions, we have that 
\begin{equation}\label{eq:Delta_P}
  \Delta_i f = 0 \qquad \forall f \in \mc P_{k}(\Gamma,\mathcal{H}),  \quad \forall i > \m^{-1}(k).
\end{equation}
Since $\Delta_k f = \mc I_{k}f - \mc I_{k-1}f = \mc I_{k}(f - \mc I_{k-1}f)$,
and due to the nestedness of the node sets $\mc Y_{k-1} \subset \mc Y_k$, the detail operators may be expressed as
\begin{align*}
	\Delta_k f
	&=
	\sum_{i = \m(k-1)+1}^{\m(k)}
	\left[
	f(y_{(i)}) - \mc I_{n-1}f(y_{(i)})
	\right]
	\ell^{(\m(k))}_i, \\
	\ell_{i}^{(\m(k))}(y)
	& :=
 	 \prod_{j = 0, j \neq i}^{\m(k)} \frac{y - y_{(j)}}{y_{(i)} - y_{(j)}}
  	\in
 	\mc P_{\m(k)} \quad \rev{\mbox{for } i  \in \{\m(k-1) +1, \ldots, \m(k)\}}.
\end{align*}
It is therefore convenient to introduce the notation      
\begin{equation}\label{eq:hier-lagr}
	h_i(y)
	:=
	 \ell_{i}^{(\m(k))}(y),
	 \qquad y\in\Gamma,
\end{equation}
where $i \in \{\m(k-1)+1, \ldots, \m(k)\}$.
The polynomials $h_i$, each associated to a node $y_{(i)}$, $i\in\bbN_0$, are called
\emph{hierarchical Lagrange polynomial}%
\footnote{\rev{The difference from the standard Lagrange polynomials is that $h_i$ is only defined for the nodes $y_{(i)}$ most recently added, with $i  \in \{\m(k-1) +1, \ldots, \m(k)\}$, whereas the standard Lagrange polyomials are redefined for all $i \in \{1, \ldots, \m(k)\}$ when new nodes are added.}},
$h_i\in \mc P_{\m(k)}$.
The \rewrite{term}{quantity} 
$
f(y_{(i)})-\mc I_{n-1}f(y_{(i)})=(f-\mc I_{n-1}f)(y_{(i)})
$
is also called \emph{hierarchical surplus}.
Next, consider tensorized detail operators 
\[
	\Delta_{\vi} := \bigotimes_{m=1}^M \Delta_{i_m},
	\qquad
	\Delta_{\vi}\colon C(\vGamma; \mc H) \to \mc P_{\m(\vi)}(\vGamma; \mc H),
\]
where $\m(\vi) = (\m(i_1),\ldots,\m(i_M)) \in \bbN^M$ and
\[
	\mc P_{\m(\vi)}
	=
	\mathrm{span}\{\vy^\vj \colon j_m \leq \m(i_m) \ \text{ for } m=1,\ldots,M\}.
\]
Given a (finite) subset $\Lambda \subset \F$ we define the \emph{sparse grid collocation operator} associated with the \emph{sparse grid} $\mc Y_\Lambda$ by
\[
	S_\Lambda := \sum_{\vi \in \Lambda} \Delta_\vi,
	\qquad
	\mc Y_{\Lambda} := \bigcup_{\vi \in \Lambda} \mc Y_{\vi},
	\qquad
	\mc Y_{\vi}
	:= \mathcal{Y}_{i_1} \times \mathcal{Y}_{i_2} \times \ldots \times \mathcal{Y}_{i_M}.
\]
We require the multi-index sets $\Lambda\subset \F$ to be \emph{downward-closed} (or \emph{monotone}), which means that
$
	\vi \in \Lambda
	\text{ implies } 
	\vi - \ve_m \in \Lambda,
$ 
where $\ve_m$ denotes the $m$th canonical unit multi-index. 
Downward-closedness of $\Lambda$ implies three facts (see e.g.\ \cite{ErnstEtAl2018}):
First, 
  \[
    \mc Y_{\Lambda} = \left\{\vy_{(\vj)}\colon \vj \leq \m(\vi), \ \vi \in \Lambda\right\},
    \qquad
    \vy_{(\vj)} := (y_{(j_1)}\,\,y_{(j_2)}\,\, \cdots \,\,y_{(j_M)})  \in \vGamma,
  \]  
  where $\vj \leq \m(\vi)$ is understood componentwise;
second, that the sparse grid collocation operator yields an approximation in $\mc P_{\m(\Lambda)}(\vGamma;\mathcal{H})$,
\[
  S_\Lambda\colon C(\vGamma;\mathcal{H}) \to \mc P_{\m(\Lambda)}(\vGamma;\mathcal{H}),
  \qquad
  \m(\Lambda)
  :=
  \{\vj \in \F \colon \vj \leq \m(\vi) \text{ for \rewrite{a}{some} } \vi \in \Lambda \};
\]
and third, together with the nestedness of the node sets, that $S_\Lambda$ is \emph{interpolatory}, i.e., 
\[
	S_\Lambda f(\vy_{(\vi)}) = f(\vy_{(\vi)}) \qquad \forall \vy_{(\vi)} \in \mc Y_\Lambda.
\]
\begin{remark}
For finite and monotone multi-index sets $\Lambda$ there exists $N \in \bbN$ multi-indices $\vi_1,\ldots,\vi_N \in \Lambda$ such that
\[
	\Lambda = \bigcup_{n=1}^J \mc R_{\vi_n},
	\qquad
	 \mc R_{\vi}
	 :=
	 \{
	 \vj \in \F\colon \vj\leq \vi
	 \},
\]
i.e., the multiindices $\vi_n$ can be viewed as the \emph{corners} of $\Lambda$. 
As an immediate consequence, we have
\[
	\mc P_{\m(\Lambda)}(\vGamma;\mathcal{H})
	=
	\bigoplus_{n=1}^N \mc P_{\m(\vi_n)}(\vGamma;\mathcal{H}).
\] 
\end{remark}

\paragraph{Adaptive sparse collocation algorithms}
Two ways to construct monotone multi-index sets $\Lambda$ for (hierarchical) sparse grid collocation
are the classical algorithm introduced by Gerstner and Griebel in \cite{GerstnerGriebel2003}
(as well as numerous variations mentioned in the literature surveyed in the introduction)
and the alternative algorithm introduced by Guignard and Nobile in \cite{GuignardNobile2018}.
Both can be seen as specific instances of the generic Algorithm~\ref{alg:aaa}.
We describe the former here and the latter (or rather, a slight variation thereof) in the next section, together with a convergence analysis.
To introduce these algorithms, we need to specify three ``ingredients'': 
the candidate set $\Cand_n$,
a \emph{marking strategy} for determining marked sets $\Mark_n \subset \Cand_n$, 
and corresponding estimates $\eta_n(\vk)$ for the error contribution of indices in the candidate set.
To this end, we require the following definitions \rev{(see also Figure \ref{fig:sets})}:
\begin{itemize}
\item 
The \emph{margin} $\Marg{\Lambda} \subset \F$ of a multi-index set $\Lambda \subset \F$ is given by
\[
   \Marg{\Lambda} 
   := 
   \{ \vk \in \F\setminus \Lambda
      \colon \vk - \ve_m \in \Lambda \text{ for some } m \in \bbN \}.
\]
\item 
The \emph{reduced margin} $\RMarg{\Lambda} \subset \Marg{\Lambda}$ of a subset $\Lambda \subset \F$ is given by
\[
   \RMarg{\Lambda} 
   := 
   \{ \vk \in \Marg{\Lambda}
      \colon \vk - \ve_m \in \Lambda \text{ for all } m \in \bbN \}.
\]
\item 
The \emph{monotone envelope} $E_\Lambda(\vk)\subset \Marg{\Lambda}$ of a multi-index $\vk \in \Marg{\Lambda}$:
\begin{equation}\label{eq:envelope}
   E_\Lambda(\vk) 
   := 
   \bigcap 
   \{ E \subset \Marg{\Lambda} 
      \colon \vk \in E \text{ and } \Lambda \cup E \text{ is monotone}  \}.  
\end{equation}
Note that $E_\Lambda(\vk) \rev{\cup \Lambda}$ is the smallest (in cardinality) monotone multi-index set containing $\Lambda \cup \{\vk\}$
and that for $\vk \in \RMarg{\Lambda}$ we have $E_\Lambda(\vk) = \{\vk\}$ by construction. 
\end{itemize}
The adaptive procedure in \cite{GerstnerGriebel2003} now chooses 
\begin{itemize}
\item 
as \rewrite{its}{} candidate set \rev{$\Cand_n$} the reduced margin of $\Lambda_n$, i.e.\ $\Cand_n = \RMarg{\Lambda_n}$;
\item 
\rewrite{and estimates}{as estimators $\eta_n$,} approximating the error contribution of $\vk \in \Cand_n$ by the $L^p$-norm of the hierarchical surplus, i.e.,
\begin{equation} \label{equ:error_indicator}
     \eta_n(\vk) = \|\Delta_\vk u \|_{L^p_\mu(\vGamma; \mc H)}, 
     \quad \vk \in \RMarg{\Lambda_n}.      
\end{equation}
Note that this is merely an error \emph{indicator} and not a proper \emph{estimator}, i.e.,
no proof of the properties required by Theorem~\ref{theo:abstract_adapt} is available.
A large body of literature, however, provides numerical evidence that this error indicator is quite robust and gives good results in practice;
\item 
\rewrite{its marking strategy selects an}{as marking strategy, to select the} index in the reduced margin which maximizes the value of $\eta_n$, i.e.,
   $\Mark_n = \{ \argmax_{\vk \in \RMarg{\Lambda_n}} \eta_n(\vk) \}$. 
An alternative strategy would be to use D\"orfler marking and mark e.g.\ the 50\% of the indices in the reduced margin with the largest $\eta_n$, cf.\ \cite{doerfler:marking}.
\end{itemize}
\begin{figure}[tbp]
  \centering
  \includegraphics[width=0.5\linewidth]{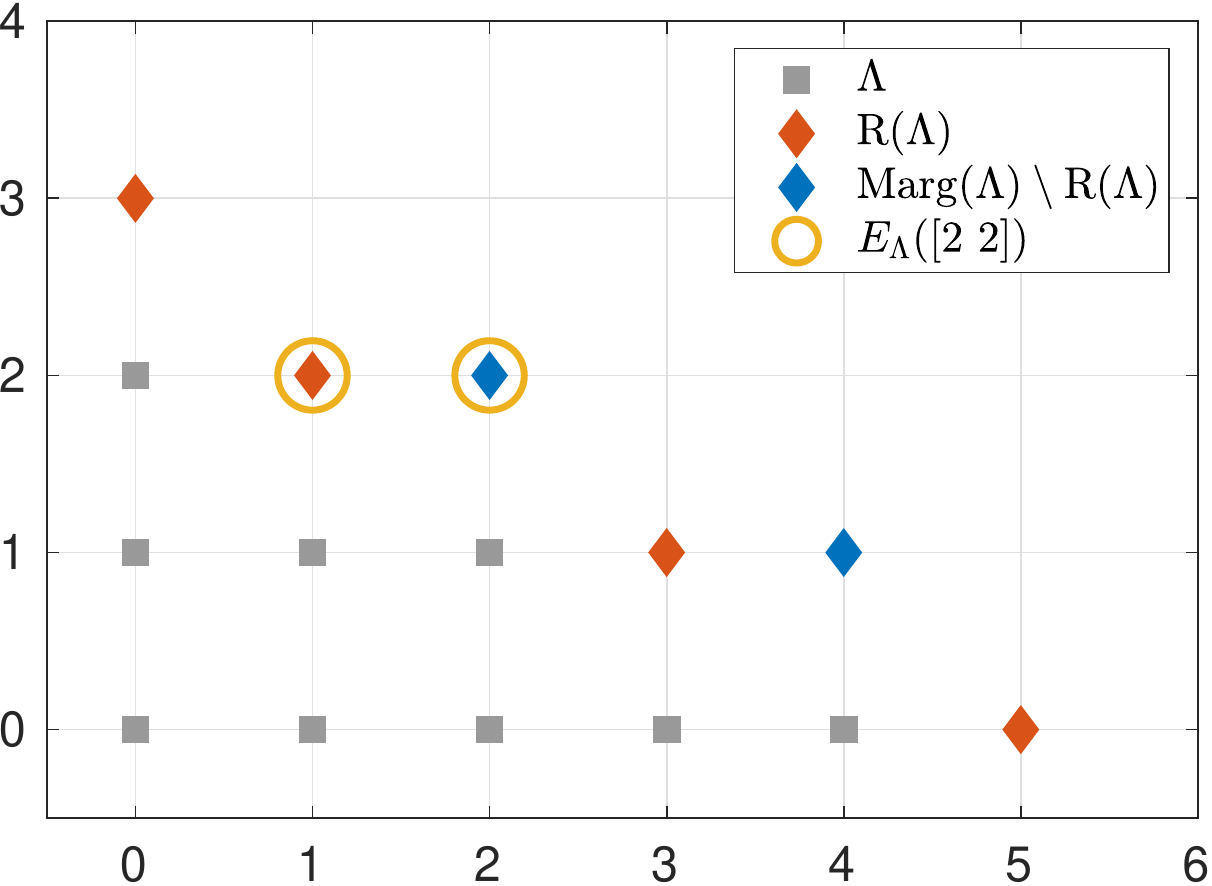}
  \caption{\rev{A multi-index set $\Lambda \subset \mathbb N_0^2$ (gray squares) and its margin $\Marg{\Lambda}$ (colored diamonds): more specifically, the multi-indices
      of $\Marg{\Lambda}$ that also belong to the reduced margin $\RMarg{\Lambda}$ are colored in red, whereas the remaining ones are colored in blue.
      Finally, we mark with yellow circles the indices of $\Marg{\Lambda}$ that constitute $E_\Lambda([2,\,2])$, i.e. the monotone envelope of $\vk=[2, \,2]$.}}
  \label{fig:sets}
\end{figure}
Algorithm~\ref{alg:GGadap} summarizes the Gerstner--Griebel scheme as pseudocode.
\begin{algorithm}[t]
 \begin{algorithmic}[1]
   \STATE $\Lambda_0 := \{\boldsymbol 0\}$
   \STATE $u_0 := S_{\Lambda_0}u$
   \FOR{$n\in\bbN_0$}
   \STATE Compute reduced margin $\RMarg{\Lambda_n}$    
   \STATE Compute error indicators (reduced margin):
   \[
    \rev{\eta_n(\vk) = \|\Delta_\vk u \|_{L^p_\mu(\Gamma; \mc H)}, \quad \vk \in \RMarg{\Lambda_n}}
   \]
   \STATE Choose $\vk^*_n := \argmax_{\vk \in \RMarg{\Lambda_n}} \eta_n(\vk) $
   \STATE Set $\Lambda_{n+1} := \Lambda_n \cup \{ \vk^*_n \}$
          and $u_{n+1} := S_{\Lambda_{n+1}} u$. 
   \ENDFOR
\end{algorithmic}\caption{Adaptive sparse grid algorithm of Gerstner and Griebel  \cite{GerstnerGriebel2003}} \label{alg:GGadap}
\end{algorithm}
Note that, since $S_\Lambda$ is interpolatory for $\mc Y_n$ nested and  $\Lambda$ monotone,
we can efficiently compute $\eta_n$ in \eqref{equ:error_indicator}, and therefore $S_{\Lambda_{n+1}}$ based on $S_{\Lambda_n}$.
For this, let $\vi \in \RMarg{\Lambda_n}$ and $\Lambda_{n+1} = \Lambda_n \cup \{\vi\}$.
Then,
\[
	\Delta_{\vi} u
	=
	\sum_{\vy_{(\vj)} \in \mc Y_{\vi}\setminus \mc Y_{\Lambda} }
	[u(\vy_{(\vj)}) - (S_{\Lambda_n}u)(\vy_{(\vj)})] \, h_\vj,
	\qquad
	h_\vj(\vy):=\prod_{m=1}^M h_{j_m}(y_m),
\]
where the $h_i$ are the univariate hierarchical Lagrange polynomials defined in (\ref{eq:hier-lagr}) and the set of additional nodes $\mc Y^+_{\vi} := \mc Y_{\vi}\setminus \mc Y_{\Lambda}$ is
\[
	\mc Y^+_{\vi}
	= \mathcal{Y}^+_{i_1} \times \mathcal{Y}^+_{i_2} \times \ldots \times \mathcal{Y}^+_{i_M},
	\qquad
	\mathcal{Y}^+_{i}
	:=
	\mathcal{Y}_{i}\setminus \mathcal{Y}_{i-1}
	=
	\left\{ y_{(j)} \colon \m(i-1)+1 \leq j \leq \m(i) \right\}.
\]
The main shortcoming of this approach is that the computation of $\Delta_{\vi} u$ requires solving the PDE to evaluate $u(\vy_{(\vi)})$, and for this reason one may refer to this algorithm as \emph{fully a posteriori}. \rev{Clearly, it would be a waste of computational resources to discard these additional PDE solutions: therefore, practical implementations
  of Algorithm \ref{alg:GGadap} ultimately augment $\Lambda$ to $\Lambda_{\text{end}} = \Lambda_n \cup \RMarg{\Lambda_n}$ at the last iteration and return
  $u_{\text{end}} = S_{\Lambda_{\text{end}}}$ instead of $S_{\Lambda_{n}}$. Nonetheless, this procedure is ``suboptimal'' in terms of computational effort.} 
If the reduced margin is large, this operation can be expensive. 
Moreover, as previously mentioned, the choice of $\eta_n$ in  \eqref{equ:error_indicator} is a heuristic and no convergence proof for the adaptive algorithm is available. 
To overcome this issue, we introduce and analyze in the next section another variation of Algorithm \ref{alg:aaa}, for which we can prove convergence.

We close this section by pointing out that using a hierarchical basis is convenient but not necessary, and the standard (non-hierarchical) Lagrange basis can also be used to implement Algorithm~\ref{alg:GGadap}. To this end, 
one would need to draw on the so-called \emph{combination technique} \cite{Griebel.Schneider.Zenger:1992} for evaluating the detail operators $\Delta_{\vi} u$
as a linear combination of tensorized Lagrange interpolants, 
  \[
    \Delta_{\vi} u = \sum_{\vj \in \{0,1\}^M} (-1)^{|\vj|} (\mathcal{I}_{i_1 - j_1} \otimes \mathcal{I}_{i_2 - j_2} \otimes \cdots \otimes \mathcal{I}_{i_M - j_M})u,
  \]
and to adjust the computation of $S_{\Lambda}u$ accordingly, see e.g.\ \cite{NobileEtAl2016,GuignardNobile2018}; this has the advantage that non-nested sequences of node sets (such as zeros of orthogonal polynomials) can be used if desired, see e.g. \cite{NobileEtAl2016,ErnstEtAl2018}.

\section{Adaptive Sparse Collocation for the Diffusion Problem}
\label{sec:convergence}

We now turn attention to \rewrite{the}{our} above-mentioned slight variation of the adaptive algorithm by Guignard and Nobile from \cite{GuignardNobile2018}; see Remark~\ref{rem:AA_GN} for a discussion on the difference between the two versions.
This algorithm is based on the following error estimator, for which reliability has been established in \cite{GuignardNobile2018}.
\begin{proposition}[{\cite[Proposition 4.3]{GuignardNobile2018} }] \label{propo:diane}
  Let $u$ denote the solution of the random elliptic PDE given in equation \eqref{eq:bvp} with linear diffusion coefficient
    as in \eqref{equ:affine-a}, and let $\Lambda\subset\F$ be a monotone subset such that the sparse
  grid collocation operator $S_\Lambda$ as introduced in Section \ref{sec:collocation} is interpolatory.
  Then, for any $p\in[1,\infty]$ we have
\[
	\|u - S_\Lambda u\|_{L^p_\mu(\vGamma; H_0^1(D))}
	\leq
	\frac1{a_{\min}}
	\sum_{\vk \in \Marg{\Lambda}} \|\Delta_\vk (a \nabla S_\Lambda u) \|_{L^p_\mu(\vGamma; L^2(D))}.
\]
\end{proposition}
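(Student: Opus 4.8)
The plan is to run a residual-based argument separately for each parameter value $\vy\in\vGamma$ and only take the $L^p_\mu$-norm at the very end. For fixed $\vy$ set $e(\vy):=u(\vy)-(S_\Lambda u)(\vy)\in H_0^1(D)$ and introduce the weak residual $r(\vy)\in H^{-1}(D)$ of the interpolant, $r(\vy)(v):=\int_D f v\,\d\vx-\int_D a(\vy)\nabla(S_\Lambda u)(\vy)\cdot\nabla v\,\d\vx$. Since $u(\vy)$ solves \eqref{eq:bvp} weakly, $r(\vy)(v)=\int_D a(\vy)\nabla e(\vy)\cdot\nabla v\,\d\vx$, so testing with $v=e(\vy)$ and using the uniform lower bound $a(\vx,\vy)\ge a_{\min}$ from \eqref{eq:UEA} gives $a_{\min}\|\nabla e(\vy)\|_{L^2}^2\le r(\vy)(e(\vy))\le \|r(\vy)\|_{H^{-1}}\,\|\nabla e(\vy)\|_{L^2}$. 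Identifying $\|\cdot\|_{H_0^1(D)}$ with $\|\nabla\cdot\|_{L^2(D)}$, this yields the pointwise estimate $\|e(\vy)\|_{H_0^1(D)}\le a_{\min}^{-1}\|r(\vy)\|_{H^{-1}(D)}$, which already produces the constant $1/a_{\min}$.

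Next I would rewrite the residual so that only the \emph{approximate} flux $\sigma_\Lambda:=a\nabla S_\Lambda u$ appears. Writing $\sigma:=a\nabla u$ for the exact flux, the PDE gives $\int_D \sigma(\vy)\cdot\nabla v\,\d\vx=\int_D f v\,\d\vx$ for \emph{every} $\vy$, i.e.\ the map $\vy\mapsto\int_D\sigma(\vy)\cdot\nabla v\,\d\vx$ is constant. Applying $S_\Lambda$ and using that $S_\Lambda$ reproduces constants (because $\boldsymbol 0\in\Lambda$ and $\Delta_\vi 1=0$ for $\vi\neq\boldsymbol 0$) shows $\int_D f v\,\d\vx=\int_D (S_\Lambda\sigma)(\vy)\cdot\nabla v\,\d\vx$, whence $r(\vy)(v)=\int_D\big((S_\Lambda\sigma)(\vy)-\sigma_\Lambda(\vy)\big)\cdot\nabla v\,\d\vx$ and therefore $\|r(\vy)\|_{H^{-1}}\le\|(S_\Lambda\sigma-\sigma_\Lambda)(\vy)\|_{L^2}$. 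The crucial simplification is that $S_\Lambda$ depends only on nodal values and, since $S_\Lambda$ is interpolatory, $\sigma$ and $\sigma_\Lambda$ coincide at every collocation node $\vy_{(\vj)}\in\mc Y_\Lambda$ (there $\nabla(S_\Lambda u)=\nabla u$, hence $a\nabla S_\Lambda u=a\nabla u$). Thus $S_\Lambda\sigma=S_\Lambda\sigma_\Lambda$ and the bound becomes $\|r(\vy)\|_{H^{-1}}\le\|(\sigma_\Lambda-S_\Lambda\sigma_\Lambda)(\vy)\|_{L^2}$, i.e.\ the residual is controlled by the sparse-grid interpolation error of $\sigma_\Lambda$ itself.

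It then remains to expand $\sigma_\Lambda-S_\Lambda\sigma_\Lambda$ into detail operators and to localize the expansion to the margin. Since $\sigma_\Lambda$ is a polynomial in $\vy$, its detail expansion is finite and $\sigma_\Lambda-S_\Lambda\sigma_\Lambda=\sum_{\vk\in\F\setminus\Lambda}\Delta_\vk\sigma_\Lambda$; I would show that every nonzero term has $\vk\in\Marg{\Lambda}$, so that $\sigma_\Lambda-S_\Lambda\sigma_\Lambda=\sum_{\vk\in\Marg{\Lambda}}\Delta_\vk\sigma_\Lambda$, and a pointwise $L^2$-triangle inequality followed by the $L^p_\mu$-triangle inequality then gives exactly $\|u-S_\Lambda u\|_{L^p_\mu(\vGamma;H_0^1(D))}\le a_{\min}^{-1}\sum_{\vk\in\Marg{\Lambda}}\|\Delta_\vk(a\nabla S_\Lambda u)\|_{L^p_\mu(\vGamma;L^2(D))}$ (the case $p=\infty$ being identical with $\esssup$ in place of integrals). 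This localization is the one genuinely delicate step and the main obstacle: each summand $\Delta_\vi u$, $\vi\in\Lambda$, lies in $\mc P_{\m(\vi)}$, and the affine factor $a=a_0+\sum_m a_m y_m$ raises the $y_m$-degree by at most one, so $a\nabla\Delta_\vi u$ is a sum of terms of coordinatewise degree $\m(\vi)$ and $\m(\vi)+\ve_m$. Using \eqref{eq:Delta_P} together with $\m^{-1}(\m(i_m))=i_m$ and, since $\m$ is strictly increasing, $\m^{-1}(\m(i_m)+1)=i_m+1$, one finds $\Delta_\vk(a\nabla\Delta_\vi u)=0$ unless $\vk\le\vi+\ve_m$ for some $m$; for such $\vk\notin\Lambda$ one necessarily has $k_m=i_m+1$ and $\vk-\ve_m\le\vi\in\Lambda$, so $\vk-\ve_m\in\Lambda$ by monotonicity and hence $\vk\in\Marg{\Lambda}$. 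Summing over $\vi\in\Lambda$ and $m\in\{1,\dots,M\}$ confirms the margin support and completes the proof.
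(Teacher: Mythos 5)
Your proof is correct. Note that the paper itself does not prove this proposition but imports it verbatim from Guignard--Nobile \cite[Proposition 4.3]{GuignardNobile2018}, and your argument is essentially a faithful reconstruction of that proof: the coercivity/duality bound $\|u(\vy)-S_\Lambda u(\vy)\|_{H_0^1}\leq a_{\min}^{-1}\|r(\vy)\|_{H^{-1}}$, the use of the interpolatory property to replace the exact flux by the collocated flux $a\nabla S_\Lambda u$, and the localization of the detail expansion to $\Marg{\Lambda}$ via the degree count for the affine coefficient. Your handling of the delicate margin-localization step is sound, and correctly exploits the paper's standing assumption that the growth function $\m$ is strictly increasing (which gives $\m^{-1}(\m(i)+1)=i+1$ and would fail for delayed sequences), together with monotonicity of $\Lambda$ to conclude $\vk-\ve_m\in\Lambda$, hence $\vk\in\Marg{\Lambda}$.
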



This proposition suggests  
$\eta_n(\vk) := \|\Delta_\vk (a \nabla S_{\Lambda_n} u) \|_{L^p_\mu(\vGamma; L^2(D))}$ 
as an error estimator for adaptively constructing the sparse grid approximations 
$u_n = S_{\Lambda_n}u_n$ 
and also to consider the entire margins $\Marg{\Lambda_n}$ as candidate sets.
This yields Algorithm \ref{alg:asc}.
Note here that the value $p\in[1,\infty]$ has to be chosen in advance and that $\Cand_n := \Marg{\Lambda_n}\subset \F$ is, in fact, finite for finite $M$.
Moreover, we highlight that Proposition \ref{propo:diane} implies that Algorithm \ref{alg:asc} satisfies the first assumption (reliable error estimator) of the abstract convergence result, \rev{stated in} Theorem \ref{theo:abstract_adapt}.
Besides that, also the third assumption of Theorem \ref{theo:abstract_adapt} is satisfied by construction, i.e, by the marking strategy $\Mark_{n} := E_{\Lambda_n}(\vk^*_n)$
\rev{(where $E_{\Lambda_n}(\vk^*_n)$ is the monotone envelope of $\Lambda_n$, see Equation \eqref{eq:envelope})}
and the choice of $\vk^*_n$, cf. Remark \ref{rem:abstract_theorem}.

\begin{algorithm}
  \caption{Adaptive sparse grid algorithm for the diffusion problem \eqref{eq:bvp}, \rev{variation of Guignard--Nobile in \cite{GuignardNobile2018}}} \label{alg:asc}
 \begin{algorithmic}[1]
  \STATE $\Lambda_0 := \{\boldsymbol 0\}$
  \STATE $u_0 := S_{\Lambda_0}u$
  \FOR{$n\in\bbN_0$}
    \STATE Compute margin as candidate set $\Cand_n := \Marg{\Lambda_n}$\\
    \STATE Compute error estimators:
    \begin{equation}\label{equ:error_estimator}
      \eta_n(\vk) := \|\Delta_\vk (a \nabla u_n) \|_{L^p_\mu(\vGamma; L^2(D))}, \quad \vk \in \Marg{\Lambda_n}     
    \end{equation}

    \STATE choose $\vk^*_n := \argmax_{\vk \in \Cand_n} \eta_n(\vk)$\\
    \STATE set $\Mark_{n} := E_{\Lambda_n}(\vk^*_n)$\\
    \STATE set $\Lambda_{n+1} := \Lambda_n \cup \Mark_n$\\
    \STATE compute $u_{n+1} := S_{\Lambda_{n+1}} u$. 
    \ENDFOR
    \end{algorithmic}
\end{algorithm}


\begin{remark}[Adaptive algorithm in \cite{GuignardNobile2018}]\label{rem:AA_GN}
  The difference between Algorithm \ref{alg:asc} and \rewrite{the}{its original} version \rev{by Guignard and Nobile} in \cite{GuignardNobile2018}
  is that in \cite{GuignardNobile2018} the following \emph{profit indicators} are introduced instead of the error estimator $\eta_n(\vk)$ given in \eqref{equ:error_estimator}:
 \begin{equation}\label{equ:AA_GN_profits}
      \pi_n(\vk) := \frac{\sum_{\vi \in E_{\Lambda_n}(\vk)} \eta_n(\vi)}{\sum_{\vi \in E_{\Lambda_n}(\vk)} W(\vi)}, \qquad \vk \in \Marg{\Lambda_n},   
    \end{equation}
with $W(\vi)$ denoting the work contribution of the multi-index $\vi$, i.e., the number of new grid points in $\mc Y^+_\vi$ required to evaluate $\Delta_\vi$ which is given by
\[
	W(\vi) := |\mc Y_\vi^+| = \prod_{m=1}^M (\m(i_m) - \m(i_m-1)).
\]
Then, $\vk^*_n$ is chosen as
\begin{equation}\label{eq:AA_GN_marking}
	\vk^*_n := \argmax_{\vk \in \Cand_n} \pi_n(\vk), \qquad \Mark_{n} := E_{\Lambda_n}(\vk^*_n).
\end{equation}
In the case of linearly growing univariate node sets $\m(i) = i$ we have $W(\vi)\equiv 1$, i.e., $\pi_n(\vk) = \frac1{|E_{\Lambda_n}(\vk)|} \sum_{\vi \in E_{\Lambda_n}(\vk)} \eta_n(\vi)$ corresponds to the average error estimator on the monotone envelope $E_{\Lambda_n}(\vk)$.
We provide a more detailed discussion on both versions of the adaptive algorithm for the elliptic problem in Section \ref{sec:numerics} with a focus on computational aspects.
\end{remark}

We now turn to our main result stating the convergence of Algorithm \ref{alg:asc}, under rather mild assumptions on the employed univariate interpolation nodes. 
Specifically, we assume an algebraic growth of the operator norm of the associated detail operators
\begin{equation} \label{equ:detail_opnorm}
	\|\Delta_k \|_\infty 
	:= 
	\sup_{0 \not\equiv f \in C(\Gamma; \bbR)} 
	\frac{ \|\Delta_k f\|_{C(\Gamma;\bbR)} }
	     { \|f\|_{C(\Gamma;\bbR)} },		\qquad k\in\bbN_0.
\end{equation}

\begin{theorem}[Convergence of Algorithm \ref{alg:asc}]\label{theo:GN_conv}
Given the assumptions of Theorem \ref{cor:Bachmayr_finite} and assuming there exist finite constants $0\leq c, \theta < \infty$ such that
\begin{equation}\label{equ:Delta_Lebesgue}
	\|\Delta_k \|_\infty 
	\leq (1 + ck )^\theta \qquad
	\forall k\in\bbN_0,
\end{equation}
the approximations $u_n$ constructed by Algorithm \ref{alg:asc} 
satisfy 
\[
	\lim_{n\to\infty} \|u - u_n\|_{L^p_\mu(\vGamma; H_0^1(D))} = 0.
\]
\end{theorem}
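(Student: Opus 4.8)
The plan is to deduce the claim from the abstract convergence result, Theorem~\ref{theo:abstract_adapt}, so it suffices to verify its three hypotheses for the estimators $\eta_n(\vk)=\|\Delta_\vk(a\nabla u_n)\|_{L^p_\mu(\vGamma;L^2(D))}$ with $\Cand_n=\Marg{\Lambda_n}$ produced by Algorithm~\ref{alg:asc}. Hypotheses~(1) and~(3) are immediate. Reliability with $C=1/a_{\min}$ (finite since $a_{\min}>0$) is precisely Proposition~\ref{propo:diane}. For~(3), the marked set $\Mark_n=E_{\Lambda_n}(\vk^*_n)$ contains the maximiser $\vk^*_n=\argmax_{\vk\in\Cand_n}\eta_n(\vk)$, so every $\vk\in\Cand_n\setminus\Mark_n$ satisfies $\eta_n(\vk)\leq\eta_n(\vk^*_n)\leq\sum_{\vi\in\Mark_n}\eta_n(\vi)$, i.e.\ (3) holds with $c=1$. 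All of the work therefore lies in hypothesis~(2): producing an $\ell^1(\F)$-limit $\eta_\infty$ of the extended estimators $\widehat\eta_n$.

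First I would show that the iterates converge. With $\Lambda_\infty:=\bigcup_n\Lambda_n$ (the $\Lambda_n$ being nested), I claim $\sum_{\vi\in\F}\|\Delta_\vi u\|_{C(\vGamma;\mc H)}<\infty$; since $\mu$ is a probability measure the $C$-norm dominates every $L^p_\mu$-norm, so this yields absolute convergence of $u_n=\sum_{\vi\in\Lambda_n}\Delta_\vi u$ to $u_\infty:=\sum_{\vi\in\Lambda_\infty}\Delta_\vi u$ in $L^p_\mu(\vGamma;\mc H)$. To prove the claim I expand $u=\sum_\vk u_\vk L_\vk$ in the Legendre basis and use $\Delta_\vi L_\vk=\bigotimes_m\Delta_{i_m}L_{k_m}$. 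The annihilation property~\eqref{eq:Delta_P} gives $\Delta_{i_m}L_{k_m}=0$ once $i_m>\m^{-1}(k_m)$, while~\eqref{equ:Delta_Lebesgue} together with $\|L_k\|_{C(\Gamma)}=\sqrt{2k+1}$ yields $\|\Delta_{i_m}L_{k_m}\|_{C(\Gamma)}\leq(1+ci_m)^\theta\sqrt{2k_m+1}$. Summing first over the finitely many $i_m\leq\m^{-1}(k_m)\leq k_m$ leaves a polynomially weighted sum of $\|u_\vk\|_{\mc H}$ over $\vk$. Writing $\|u_\vk\|_{\mc H}=(b_\vk^{-1}\vrho^\vk\|u_\vk\|_{\mc H})\,b_\vk\vrho^{-\vk}$, the first factor is $\ell^2(\F)$-summable by Theorem~\ref{cor:Bachmayr_finite}, and the second, multiplied by the polynomial weight and by $b_\vk=\prod_m\sqrt{1+2k_m}$, again lies in $\ell^2(\F)$ because the geometric factor $\vrho^{-\vk}$ (with $1<\rho_m<\alpha^{-1}$) dominates every polynomial weight; Cauchy--Schwarz then closes the estimate.

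Next I would set $\eta_\infty(\vk):=\|\Delta_\vk(a\nabla u_\infty)\|_{L^p_\mu(\vGamma;L^2(D))}$ for $\vk\in\Lambda_\infty\cup\Marg{\Lambda_\infty}$ and $\eta_\infty(\vk):=0$ otherwise; this restriction is unavoidable, because $\widehat\eta_n$ vanishes off the increasing sets $\Lambda_n\cup\Marg{\Lambda_n}$, whose union equals $\Lambda_\infty\cup\Marg{\Lambda_\infty}$. The key structural observation is a \emph{locality estimate}: as $\Delta_\vi u$ has degree $\m(i_m)$ in $y_m$ and $a$ is affine, $a\nabla\Delta_\vi u$ has degree $\m(i_m)+1$, whence~\eqref{eq:Delta_P} and the strict monotonicity of $\m$ force $\Delta_\vk(a\nabla\Delta_\vi u)=0$ unless $\vk\leq\vi+\boldsymbol 1$. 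Coupling this locality with~\eqref{equ:Delta_Lebesgue} and the summability of the previous paragraph gives the double bound $\sum_{\vi\in\F}\sum_{\vk\in\F}\|\Delta_\vk(a\nabla\Delta_\vi u)\|_{L^p_\mu(\vGamma;L^2(D))}<\infty$, so in particular $\eta_\infty\in\ell^1(\F)$. For $\|\widehat\eta_n-\eta_\infty\|_{\ell^1}\to0$ I split over $\vk\in\Lambda_n\cup\Marg{\Lambda_n}$, where the reverse triangle inequality gives $|\widehat\eta_n(\vk)-\eta_\infty(\vk)|\leq\|\Delta_\vk(a\nabla(u_n-u_\infty))\|_{L^p_\mu(\vGamma;L^2(D))}$, and over the remaining indices, where the contribution is $\sum_{\vk\notin\Lambda_n\cup\Marg{\Lambda_n}}\eta_\infty(\vk)$. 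Since $u_n-u_\infty=-\sum_{\vi\in\Lambda_\infty\setminus\Lambda_n}\Delta_\vi u$, the locality estimate exhibits the first term as a tail of the convergent double series above, while the second is a tail of the convergent series $\sum_\vk\eta_\infty(\vk)$; both vanish as $n\to\infty$ by dominated convergence. This establishes~(2) and, with~(1) and~(3), the theorem.

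The main obstacle is the summability step: converting the coefficient decay of Theorem~\ref{cor:Bachmayr_finite} into genuine $\ell^1$-summability of the detail terms despite the algebraically growing operator norms~\eqref{equ:Delta_Lebesgue} and the endpoint growth $\sqrt{2k+1}$ of the normalised Legendre polynomials. Only the geometric weight $\vrho^{-\vk}$ can defeat the accumulated polynomial factors, so the exponents generated by the doubly nested summation (over $\vi$ and over $\vk$) must be tracked carefully to confirm that they remain dominated by the geometric decay. A second, more subtle point is that hypothesis~(2) demands $\ell^1$-convergence of $\widehat\eta_n$, not merely pointwise convergence; this is exactly why the locality estimate $\Delta_\vk(a\nabla\Delta_\vi u)=0$ for $\vk\not\leq\vi+\boldsymbol 1$ is indispensable, as it supplies the summable dominating sequence required to pass to the limit.
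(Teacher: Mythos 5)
Your proposal is correct, and it follows the paper's skeleton exactly at the top level: reduction to Theorem~\ref{theo:abstract_adapt}, reliability from Proposition~\ref{propo:diane}, assumption~(3) from the marking strategy with $c=1$, and the same definitions of $u_\infty$ and $\eta_\infty$. Within the crucial verification of assumption~(2), however, you take a genuinely different technical route. The paper expands $u$ in the \emph{Taylor} basis (part~1 of Theorem~\ref{cor:Bachmayr_finite}), proves the uniform bound $\sup_{\Lambda}\|S_\Lambda T_\vk\|_{C}\leq\prod_m(1+ck_m)^{1+\theta}$ (Proposition~\ref{propo:S_Tk}), and from it derives a pointwise estimator bound $\eta(\vk,S_\Lambda u)\leq C g(\vk)$ valid for \emph{every} monotone $\Lambda$ (Lemma~\ref{lem:eta_bound}); Term~I is then handled by dominated convergence together with the separately proved $C$-norm convergence $\|u_\infty-u_n\|_{C}\to 0$ (Lemma~\ref{lem:u_inf}), exploiting continuity of the estimator in $u_n$. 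You instead expand in the \emph{Legendre} basis (part~2 of Theorem~\ref{cor:Bachmayr_finite}, absorbing the extra factors $b_\vk$ and $\|L_k\|_{C(\Gamma)}=\sqrt{2k+1}$ into the polynomial weight that $\vrho^{-\vk}$ defeats via Lemma~\ref{lem:cohen}), and you organize everything around a single absolutely convergent double series $\sum_{\vi}\sum_{\vk}\|\Delta_\vk(a\nabla\Delta_\vi u)\|_{L^p_\mu(\vGamma;L^2(D))}<\infty$, using the same locality the paper uses in Lemma~\ref{lem:eta_bound} (your $\vk\leq\vi+\bs 1$ is exactly the paper's $\m(\vi)+\bs 1\geq\m(\vk-\bs 1)+\bs 1$). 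Both Term~I and Term~II then become tails of convergent series, so you never need Proposition~\ref{propo:S_Tk}, the $\Lambda$-uniform Lemma~\ref{lem:eta_bound}, or the continuity-in-$u_n$ argument; your route is more self-contained, while the paper's buys a quantitative decay bound for the estimator uniform over all monotone index sets, which is of independent interest and serves as the dominating sequence. One point you gloss (though you flag the bookkeeping yourself): for the double sum, plain $\ell^1$-summability of $\|\Delta_\vi u\|_{C}$ is \emph{not} enough, since the inner sum over $\vk\leq\vi+\bs 1$ contributes a factor $\prod_m(i_m+2)(1+c(i_m+1))^{\theta}$; you need the pointwise bound $\|\Delta_\vi u\|_{C}\lesssim\beta(\vi)\,b_\vi^2\,\vrho^{-\vi}$, which your own Cauchy--Schwarz computation does deliver via the tail estimate of Proposition~\ref{propo:tail_bound}, after which Lemma~\ref{lem:cohen} closes the argument.
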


We already established above that Algorithm \ref{alg:asc} satisfies the first and third assumption of the abstract convergence theorem,
\rev{i.e.} Theorem \ref{theo:abstract_adapt}.
It thus remains to verify the second assumption.
This turns out to be rather technical and is presented in detail in Section \ref{sec:proof}.

We now comment on the additional assumption \eqref{equ:Delta_Lebesgue} of Theorem \ref{theo:GN_conv} regarding the operator norms $\|\Delta_k \|_\infty$ of the univariate detail operators.
Condition \eqref{equ:Delta_Lebesgue} is rather mild and satisfied, e.g., if the corresponding interpolation operators $\mc I_k$
  possess an at most algebraically increasing Lebesgue constant:
\begin{equation}\label{equ:I_Lebesgue}
	\|\mc I_k \|_\infty := \sup_{f \colon \|f\|_{C(\Gamma;\bbR)}=1} \|\mc I_k f\|_{C(\Gamma;\bbR)}
	\leq c_1 + c_2 n^\theta
	\qquad 
	\forall k\geq 1,
\end{equation}
for finite constants $0\leq c_1,c_2,\theta < \infty$, since then with a finite $c  = c(c_1,c_2,\theta) < \infty$
\[
	\|\Delta_k \|_\infty \leq \|\mc I_k \|_\infty + \|\mc I_{k-1} \|_\infty \leq 2c_1 + 2c_2 k^\theta \leq c k^\theta
	\qquad
	\forall k\geq 1,
\]
and $\Delta_0 = \mc I_0$, i.e., $\|\Delta_0 \|_\infty = \|\mc I_0 \|_\infty = 1$.
Note that the algebraic growth bound \eqref{equ:I_Lebesgue} holds, for instance, for interpolation based on Leja and R-Leja nodes $y_{(j)} \in [-1,1]$ introduced above, see \cite{Chkifa2013,Chkifa2015} and references therein, where such bounds were proved for Leja and R-Leja nodes, respectively:
  \[
    \|\mc I_k\|_{\infty} \leq 5k^2 \log k, \mbox{ for } k \geq 2, \qquad 
    \|\mc I_k\|_{\infty} \leq 2k, \mbox{ for } k \geq 1.
  \]
Moreover, for Clenshaw--Curtis nodes combined with the doubling rule $\m(k) = 2^k$, $k\geq 1$, we obtain by classical results \cite{McCabePhillips1973, Brutman1978} that
\[
	\|\mc I_k\|_{\infty}
	\leq
	1 + \frac 2\pi \log\left( \m(k) \right)
	=
	1 + \frac {2\log 2}\pi \ k,
	\qquad
	k\geq 1.
\]


\subsection{Extensions of Theorem  \ref{theo:GN_conv}}

In this subsection we comment on two possible extensions of our convergence analysis. 


\paragraph{Convergence of the adaptive algorithm \rev{by Guignard and Nobile} in \cite{GuignardNobile2018}}
As outlined in Remark \ref{rem:AA_GN}, the adaptive algorithm proposed \rev{by Guignard and Nobile} in \cite{GuignardNobile2018} differs from Algorithm \ref{alg:asc} only in the marking strategy or, to be more precise, by the choice of $\vk^*_n$, see \eqref{eq:AA_GN_marking}.
Thus, in order to extend Theorem \ref{theo:GN_conv} to this algorithm it suffices to verify that the third assumption
of Theorem \ref{theo:abstract_adapt} also holds for the marking strategy \eqref{eq:AA_GN_marking} w.r.t.~to the error estimators $\eta_n$ given in \eqref{equ:error_estimator}. 
We focus on the case of Leja nodes with a linear growth function $\m(i) \equiv i$ here, since the \rewrite{adaptive algorithm by \cite{GuignardNobile2018} using}{the version with} Clenshaw--Curtis nodes was analyzed in the recent \rewrite{work}{work on convergence} \cite{Feischl2020} \rev{mentioned in the introduction}.
\rev{If Leja points are considered,} we can easily \rewrite{then}{} ensure convergence by a mild additional assumption: there exists a constant  $0< c < \infty$ such that for any monotone multi-index set $\Lambda$ we have
\begin{equation}\label{eq:AA_GN_cond}
	\max_{\vk \in \Marg{\Lambda}}
	\eta_{\Lambda}(\vk)
	\leq
	c\  
	\max_{\vk \in \RMarg{\Lambda}}
	\eta_{\Lambda}(\vk),
	\qquad
	\eta_{\Lambda}(\vk)
	:=
	\|\Delta_\vk (a \nabla S_\Lambda u) \|_{L^p_\mu(\vGamma; L^2(D))},
\end{equation}
i.e., the largest error estimator in the \emph{full margin} can be bounded by the constant times the largest error estimator in the \emph{reduced margin}.
Indeed, by construction of the profits $\pi_n$ in \eqref{equ:AA_GN_profits} and of the marking strategy in \eqref{eq:AA_GN_marking} we have for $\m(i)\equiv i$ that $\pi_n(\vk) = \eta_n(\vk)$ if $\vk \in \RMarg{\Lambda_n}$ and
\[
	\max_{\vk \in \rev{\RMarg{\Lambda_n}}}
	\eta_{n}(\vk)
	=
	\max_{\vk \in \rev{\RMarg{\Lambda_n}}}
	\pi_{n}(\vk)
	\leq
	\frac{\sum_{\vi \in \Mark_{n}} \eta_n(\vi)}{\sum_{\vi \in \Mark_{n}} W(\vi)}
	\leq
	\sum_{\vi \in \Mark_{n}}
	\eta_n(\vi).
\]
Hence, condition \eqref{eq:AA_GN_cond} then guarantees that the third assumption of Theorem \ref{theo:abstract_adapt} is also satisfied for the marking strategy \eqref{eq:AA_GN_marking}.
We consider \eqref{eq:AA_GN_cond} as a plausible assumption in practice, although pathological counterexamples may possibly be constructed.

\paragraph{Convergence of the Gerstner--Griebel algorithm}
The abstract convergence result, Theorem \ref{theo:abstract_adapt}, as well as our techniques for proving Theorem \ref{theo:GN_conv} can also be exploited to show convergence of \rev{the adaptive algorithm by Gerstner and Griebel in \cite{GerstnerGriebel2003}, i.e. of} Algorithm \ref{alg:GGadap}.
To this end, \rewrite{we require, of course,}{we need of course to assume} the reliability of the error indicators $\eta_n(\vk) = \|\Delta_\vk u\|_{L^p_\mu(\vGamma;\mc H)}$.
Since these hierarchical surpluses are not connected to the model problem \eqref{eq:bvp},
as is the case for the residual-based error estimators \eqref{equ:error_estimator}, we state the Theorem in a more general setting,
i.e., we consider general Hilbert space-valued mappings $u\colon \vGamma \to \mc H$ and
moreover, we do not restrict to solutions $u$ that admit a Taylor expansion,
but rather consider the more general case of a solution that admits an expansion
over polynomials $P_k$ with a certain growth of their maximum norm.
\rev{Reliability is also not proved here but merely assumed, and must be checked on a case-by-case basis.}
\begin{theorem}[\rev{Convergence of Algorithm \ref{alg:GGadap} by Gerstner and Griebel, \cite{GerstnerGriebel2003}}]\label{theo:GG_alg}
Let $\mc H$ be a separable Hilbert space and let $u \in C(\vGamma;\mc H)$ allow for a polynomial expansion \eqref{equ:Poly_Exp} converging in $L^p_\mu(\vGamma;\mc H)$ for a $p \in [1,\infty]$ where the corresponding univariate polynomials $P_k \in \mc P_k(\Gamma;\bbR)$ satisfy 
\begin{equation}\label{equ:poly_cond}
	\| P_k \|_{C(\Gamma; \bbR)} 
	\leq 
	(1 + \widetilde{c} k)^{\widetilde{\theta}}
\end{equation}
for finite constants $\widetilde{c}, \widetilde{\theta} \geq 0$.
Further assume that
\begin{enumerate}
\item
the coefficients $u_\vk \in \mc H$, $\vk \in \F$, of the polynomial expansion \eqref{equ:Poly_Exp} satisfy
\[
	\left(\vrho^\vk \|u_\vk\|_{\mc H}\right)_{\vk \in \F} \in \ell^2(\F)
\]
for a weight vector $\vrho \in\bbR^M$ with $1 < \rho_m$ for all $m=1,\ldots,M$;
\item
there exists a constant $C < \infty$ such that for any finite and monotone $\Lambda \subset \F$
\begin{equation}\label{eq:surplus_condition}
	\|u - S_\Lambda u\|_{L^p_\mu(\vGamma;\mc H)}
	\leq
	C \sum_{\vk \in \RMarg{\Lambda}}
	\|\Delta_\vk u\|_{L^p_\mu(\vGamma;\mc H)};
\end{equation}
\item
the univariate detail operators $\Delta_k$ satisfy \eqref{equ:Delta_Lebesgue} for finite constants $0\leq c, \theta < \infty$.
\end{enumerate}
Then we have for the approximations $u_n$ constructed by Algorithm \ref{alg:GGadap} that
\[
	\lim_{n\to\infty} \|u - u_n\|_{L^p_\mu(\vGamma; \mc H)} = 0.
\]
\end{theorem}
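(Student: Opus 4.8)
The plan is to deduce Theorem~\ref{theo:GG_alg} from the abstract convergence result, Theorem~\ref{theo:abstract_adapt}, applied to Algorithm~\ref{alg:GGadap}. The crucial structural observation is that here the error indicators $\eta_n(\vk)=\|\Delta_\vk u\|_{L^p_\mu(\vGamma;\mc H)}$ do \emph{not} depend on the iterate $u_n$, so I abbreviate $e(\vk):=\|\Delta_\vk u\|_{L^p_\mu(\vGamma;\mc H)}$. Of the three hypotheses of Theorem~\ref{theo:abstract_adapt}, the first is exactly the assumed reliability~\eqref{eq:surplus_condition} at $\Lambda=\Lambda_n$ (recall $\Cand_n=\RMarg{\Lambda_n}$), and the third holds with $c=1$ because the marking $\Mark_n=\{\vk^*_n\}$ picks the maximiser of $e$ over $\Cand_n$ (cf.\ Remark~\ref{rem:abstract_theorem}). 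Hence the whole argument reduces to verifying the second hypothesis.

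Next I would show that the second hypothesis follows from the single summability statement $e=(e(\vk))_{\vk\in\F}\in\ell^1(\F)$. Set $G_n:=\Lambda_n\cup\Cand_n$. These sets are nested: if $\vk\in\RMarg{\Lambda_n}$ is not the chosen index, then $\vk\notin\Lambda_{n+1}$ while $\vk-\ve_m\in\Lambda_n\subset\Lambda_{n+1}$ for all relevant $m$, so $\vk\in\RMarg{\Lambda_{n+1}}\subset G_{n+1}$; the chosen index and the elements of $\Lambda_n$ lie in $\Lambda_{n+1}\subset G_{n+1}$ as well. With $\widehat\eta_n$ as in Theorem~\ref{theo:abstract_adapt} we have $\widehat\eta_n(\vk)=e(\vk)$ for $\vk\in G_n$ and $\widehat\eta_n(\vk)=0$ otherwise, so $\widehat\eta_n(\vk)$ increases monotonically to the limit $\eta_\infty(\vk):=e(\vk)$ on $G_\infty:=\bigcup_n G_n$ and $\eta_\infty(\vk):=0$ off $G_\infty$. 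Once $e\in\ell^1(\F)$, this $\eta_\infty$ lies in $\ell^1(\F)$ and monotone convergence gives $\|\eta_\infty-\widehat\eta_n\|_{\ell^1}=\sum_{\vk\in G_\infty\setminus G_n}e(\vk)\to0$. It is worth stressing that one need \emph{not} prove $G_\infty=\F$: since $\Cand_n\subseteq G_\infty$ always, the quantity $\sum_{\vk\in\Cand_n}e(\vk)$ is bounded by $\sum_{\vk\notin\Lambda_n}\eta_\infty(\vk)$, which Theorem~\ref{theo:abstract_adapt} drives to zero.

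It then remains to establish $e\in\ell^1(\F)$, which I regard as the heart of the matter. First, combining $(\vrho^\vj\|u_\vj\|_{\mc H})_\vj\in\ell^2(\F)$ with the algebraic bound~\eqref{equ:poly_cond}, which gives $\|P_\vj\|_{C(\vGamma;\bbR)}=\prod_m\|P_{j_m}\|_{C(\Gamma;\bbR)}\le\prod_m(1+\widetilde c j_m)^{\widetilde\theta}$, a Cauchy--Schwarz estimate (as in the final step below) shows $\sum_\vj\|u_\vj\|_{\mc H}\|P_\vj\|_{C(\vGamma;\bbR)}<\infty$. Thus the expansion~\eqref{equ:Poly_Exp} converges absolutely in $C(\vGamma;\mc H)$, its sum coincides with $u$, and the bounded finite-rank operator $\Delta_\vk$ may be applied termwise: $\Delta_\vk u=\sum_\vj u_\vj\,\Delta_\vk P_\vj$. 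By~\eqref{eq:Delta_P} the tensorised detail $\Delta_\vk P_\vj=\prod_{m=1}^M\Delta_{k_m}P_{j_m}$ vanishes unless $k_m\le\m^{-1}(j_m)$ for every $m$, and by~\eqref{equ:Delta_Lebesgue} together with~\eqref{equ:poly_cond},
\[
  \|\Delta_\vk P_\vj\|_{C(\vGamma;\bbR)}
  =\prod_{m=1}^M\|\Delta_{k_m}P_{j_m}\|_{C(\Gamma;\bbR)}
  \le\prod_{m=1}^M(1+ck_m)^{\theta}\,(1+\widetilde c j_m)^{\widetilde\theta}.
\]
Summing $e(\vk)\le\sum_\vj\|u_\vj\|_{\mc H}\|\Delta_\vk P_\vj\|_{C(\vGamma;\bbR)}$ over $\vk$ and exchanging the order of summation, the inner sum runs over the finite box $\{0\le k_m\le\m^{-1}(j_m)\le j_m\}$ and is bounded by the purely algebraic factor $W(\vj):=\prod_{m=1}^M(j_m+1)(1+cj_m)^{\theta}(1+\widetilde c j_m)^{\widetilde\theta}$, whence $\sum_\vk e(\vk)\le\sum_\vj\|u_\vj\|_{\mc H}\,W(\vj)$.

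Finally, a Cauchy--Schwarz step splits off the weights,
\[
  \sum_\vj\|u_\vj\|_{\mc H}W(\vj)
  \le\Big(\sum_\vj\vrho^{2\vj}\|u_\vj\|_{\mc H}^2\Big)^{1/2}
     \Big(\sum_\vj\vrho^{-2\vj}W(\vj)^2\Big)^{1/2},
\]
whose first factor is finite by hypothesis; the second factorises over $m$ into univariate sums $\sum_{j\ge0}\rho_m^{-2j}\big[(j+1)(1+cj)^{\theta}(1+\widetilde c j)^{\widetilde\theta}\big]^2$, each convergent because $\rho_m>1$ makes the geometric decay of $\rho_m^{-2j}$ beat the polynomial growth in $j$. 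This yields $e\in\ell^1(\F)$ and, with the first two paragraphs, all hypotheses of Theorem~\ref{theo:abstract_adapt}, giving $\|u-u_n\|_{L^p_\mu(\vGamma;\mc H)}\to0$. I expect this last summability estimate to be the only genuinely delicate point: everything hinges on the interplay between the exponential weights $\vrho$ and the algebraically growing Lebesgue and polynomial factors, while the rest is bookkeeping inherited from the abstract theorem.
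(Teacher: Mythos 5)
Your proposal is correct, and its skeleton coincides with the paper's proof: both reduce Theorem \ref{theo:GG_alg} to the abstract Theorem \ref{theo:abstract_adapt}, dispose of the first assumption via \eqref{eq:surplus_condition} with $\Cand_n=\RMarg{\Lambda_n}$ and of the third via the argmax marking (with $c=1$), and then spend all the work on the second assumption by showing $\bigl(\|\Delta_\vk u\|_{L^p_\mu(\vGamma;\mc H)}\bigr)_{\vk\in\F}\in\ell^1(\F)$, so that $\|\eta_\infty-\widehat\eta_n\|_{\ell^1}$ reduces to a vanishing tail sum over $\Lambda^+_\infty\setminus\Lambda^+_n$. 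Where you genuinely diverge is in how that summability is established. The paper's Lemma \ref{lem:GG_aux} first proves the \emph{pointwise} bound \eqref{equ:surplus_bound}, i.e.\ $\|\Delta_\vk u\|_{L^p_\mu(\vGamma;\mc H)}\le C\,g(\vk)$ with $g(\vk)=\prod_{m}(1+\widetilde c k_m)^{\widetilde\theta}(1+ck_m)^{\theta}\,\vrho^{-\vk}$, obtained by Cauchy--Schwarz in the expansion index followed by the tail estimate of Proposition \ref{propo:tail_bound}, and then invokes Lemma \ref{lem:cohen} to conclude $g\in\ell^1(\F)$. You instead sum over $\vk$ first: after swapping the order of summation, for each fixed $\vj$ the nonvanishing $\vk$ fill the finite box $k_m\le\m^{-1}(j_m)\le j_m$, which yields the weight $W(\vj)=\prod_m (j_m+1)(1+cj_m)^{\theta}(1+\widetilde c j_m)^{\widetilde\theta}$, and a single Cauchy--Schwarz plus elementary univariate geometric-beats-polynomial sums finishes the argument. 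This makes your proof independent of both auxiliary results (Proposition \ref{propo:tail_bound} and Lemma \ref{lem:cohen}) and somewhat more elementary; the price is that you do not produce the per-index decay bound \eqref{equ:surplus_bound}, which the paper records and which would be the natural starting point for extracting a convergence rate. Two smaller points where you are actually more careful than the paper: you justify the termwise application of $\Delta_\vk$ to the expansion by upgrading the assumed $L^p_\mu$-convergence to absolute convergence in $C(\vGamma;\mc H)$ using \eqref{equ:poly_cond}, whereas the paper performs this interchange silently; and you verify explicitly that the sets $\Lambda_n\cup\RMarg{\Lambda_n}$ are nested (an unmarked index of the reduced margin stays in the reduced margin), a fact the paper uses implicitly when it writes $\Lambda^+_\infty=\bigcup_n\Lambda^+_n$ and lets the tail sums vanish.
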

Note that the first item on the $u_\vk$ is satisfied for the model problem by Theorem \ref{cor:Bachmayr_finite} and that for Taylor polynomials condition \eqref{equ:poly_cond} holds with $\widetilde{c} = \widetilde{\theta} = 0$.
This theorem provides an overview of the three most important "ingredients" for convergence of adaptive collocation: exponentially decaying coefficients $u_\vk$, only algebraically growing norms of the $\Delta_\vk$ and reliability of the employed error indicators.
The proof of Theorem \ref{theo:GG_alg} is significantly easier than the proof of Theorem  \ref{theo:GN_conv}, because the error indicators do not depend on the current approximation. 
Nonetheless, proving Theorem \ref{theo:GG_alg} requires some auxiliary results stated in Section \ref{sec:proof} and is therefore postponed to Section \ref{sec:convergence proof_GG}.

\subsection{Computational Considerations} \label{sec:numerics}

Having established the convergence of \rewrite{the}{our variant of the algorithm by Guignard and Nobile, as stated in} Algorithm \ref{alg:asc},
as well as \rev{of} the Gerstner--Griebel adaptive sparse grid algorithm \rev{Algorithm \ref{alg:GGadap}} (GG algorithm for short in the following),
we comment on the computational advantages and disadvantages of both:
\begin{enumerate}
\item 
The GG algorithm considers candidate indices in the \emph{reduced margin} instead of the \emph{full margin}. 
This makes treating problems with high-dimensional parameters somewhat easier with the GG algorithm, since the size of the full margin grows substantially faster than the reduced margin.
\item 
However, as already noted, the GG algorithm is \emph{fully a posteriori}: evaluating the error indicators involves actually evaluating $u$ (i.e., solving additional PDEs) on the new collocation points $\mathcal{Y}^+_n(\vk) = \mathcal{Y}_{\vk} \setminus \mathcal{Y}_{\Lambda_n \cup \{\vk\}}$ for each $\vk \in \RMarg{\Lambda_n}$, see \eqref{equ:error_indicator} Algorithm~\ref{alg:GGadap}.
By contrast, Algorithm~\ref{alg:asc} computes its error estimator by evaluating \emph{the current sparse grid interpolant $u_n$} at the new collocation points $\mathcal{Y}^+_n(\vk)$ for $\vk \in \Marg{\Lambda_n}$.
This is a significant advantage of the error estimator-based algorithms
\rev{(both the original version by Guignard and Nobile and our variant Algorithm \ref{alg:asc})}
over the GG algorithm, in particular if solving the PDE for individual parameter values is computationally expensive
\rev{(even though these additional PDE solves are not discarded but ultimately enter the final approximation returned
  by Algorithm \ref{alg:GGadap}, as already discussed in Section \ref{sec:collocation})}.
\item 
On the other hand, because the error estimators are based on the current approximation, they have to be recomputed in each step of  Algorithm \ref{alg:asc}, i.e., in general $\eta_n(\vk) \neq \eta_{n+1}(\vk)$ for any $\vk \in \Marg{\Lambda_n} \cap \Marg{\Lambda_{n+1}}$. 
This is not required by the GG algorithm.
Thus, the evaluation of the sparse grid interpolant $u_n$ should be implemented in a very efficient way, since this operation is repeated at each iteration for an increasingly large number of multi-indices in the margin. 
In this sense, the hierarchical representation of the sparse grid interpolant via hierarchical Lagrange polynomials and hierarchical surpluses is to be preferred to the classical combination technique representation \cite{Griebel.Schneider.Zenger:1992}, 
since the former usually yields a faster evaluation---at the price of a higher offline-cost due to the computation of the surpluses.
\item 
The hierarchical sparse grid representation as well as the error estimators in \cite{GuignardNobile2018} for the diffusion problem require nested univariate node sets---for an efficient implementation and reliability, respectively.
Instead, the GG algorithm also works with non-nested nodes, see e.g. \cite{NobileEtAl2016,ErnstEtAl2018,ernst.eal:Leja+Levy}.
This might be a rather minor point, since suitable nested node families in form of Leja or Clenshaw-Curtis nodes are available.
\end{enumerate}

As an extensive numerical study of the error estimator-based adaptive scheme has been already carried out \rev{by Guignard and Nobile} in \cite{GuignardNobile2018},
we present no further numerical experiments here.
\rewrite{For instance, in [30] the authors}{In their study, they }
observed for several numerical test examples of the diffusion problem \eqref{eq:bvp} that the error estimator stated in Proposition~\ref{propo:diane} is sharp.
These test examples included different dimensions of the physical domain ($d=1,2$) as well as different numbers $M$ of parameter variables and different expansion functions $a_m$ in the definition of the diffusion coefficient.
Besides this, a second set of experiments in \cite{GuignardNobile2018} compared the performance of the error estimator-based algorithm and the GG algorithm: both showed a similar performance w.r.t.~the number of grid points in the corresponding adaptively constructed sparse grids $\mathcal Y_{\Lambda_n}$ (recall that each sparse grid point corresponds to a PDE solve); however, if all PDE solves (i.e., also those necessary for evaluating the profits on the margin) are taken into account, than the GG algorithm performed significantly less effectively.

Although \rewrite{the setting and}{} the algorithm \rev{by Guignard and Nobile} in \cite{GuignardNobile2018} slightly differs from \rewrite{the setting and}{} Algorithm~\ref{alg:asc} as considered here, these differences are negligible for the numerical performance for the following reasons:
\begin{itemize}
\item
 The version of Algorithm \ref{alg:asc} considered in \cite{GuignardNobile2018} considers normalized profit indicators $\pi_n$ for the indices $\vk$, see \eqref{equ:AA_GN_profits}. However, previous numerical evidence for the GG algorithm suggests that whether error indicators or profit indicators are used does not play a major role for the convergence, see e.g. \cite{NobileEtAl2016}.
Therefore, for the same reasons, one can expect Algorithm~\ref{alg:asc} to
exhibit similar numerical behavior as the \rev{original} adaptive algorithm \rev{by Guignard and Nobile} in \cite{GuignardNobile2018}.

\item 
Although the second set of results in \cite{GuignardNobile2018} is for Clenshaw--Curtis collocation points only, it is well-known that in practice the performance of Leja and Clenshaw--Curtis points is quite similar for adaptive sparse collocation using the GG algorithm, see e.g. \cite{nobile.etal:leja-points}.
Thus, it is again reasonable to assume that similar results to those reported in \cite{GuignardNobile2018} also hold for Algorithm~\ref{alg:asc} using Leja nodes.
  
\item 
The tests in \cite{GuignardNobile2018} are performed with $p=\infty$ only, both for the evaluation of the error and for the computation of the error indicator. 
Our theory covers any $p \in [1,\infty]$, and we expect that GG and Algorithm \ref{alg:asc} would behave similarly also for $p \neq \infty$. 
\end{itemize}

\section{Proofs \rewrite{of Convergence}{of Theorems \ref{theo:GN_conv} and \ref{theo:GG_alg}}}\label{sec:proof}
We \rewrite{collect}{begin this section by stating} four auxiliary results required for the subsequent proof of our main result\rev{s}, Theorem\rev{s} \ref{theo:GN_conv}
\rev{and \ref{theo:GG_alg}}.
First, we recall a statement on the operator norm of the tensorized detail operators $\Delta_\vi$ given in \eqref{equ:detail_opnorm}.

\begin{proposition}[{\cite[Section 3]{ChkifaEtAl2014}}]\label{propo:Delta_vi} 
For the operator norm \eqref{equ:detail_opnorm} of the tensorized detail operators
\[
	\|\Delta_\vi \|_\infty 
	= 
	\sup_{0\not\equiv f \in C(\vGamma; \bbR)} 
	\frac{\|\Delta_\vi f\|_{C(\vGamma;\bbR)}}
	     {\|f\|_{C(\vGamma;\bbR)}}, 
	\qquad \vi \in\F,
\]
there holds
\[
	\|\Delta_\vi \|_\infty = \prod_{m = 1}^M \|\Delta_{i_m} \|_\infty.
\]
\end{proposition}

\noindent Next, we provide an estimate for the sparse grid collocation operator $S_\Lambda$
applied to Taylor polynomials/multivariate monomials given an algebraically growing operator norm of the univariate detail operators.
This result is similar to~\cite[Proposition 3.1]{ErnstEtAl2018}.

\begin{proposition}
\label{propo:S_Tk}
Let there exist constants $1< c < \infty$ and $\theta<\infty$ such that
\[
	\|\Delta_i \|_\infty 
	\leq (1 + ci )^\theta, \qquad
	\forall i\in\bbN.
\]
Then for the Taylor polynomials $T_\vk(\vy) := \vy^\vk$, $\vk\in\F$, and $\vGamma = [-1,1]^M$ we have
\[
	\sup_{\Lambda \subseteq \F} \|S_\Lambda T_\vk\|_{C(\vGamma;\bbR)} 
	\leq
	\prod_{m = 1}^M (1+ c k_m)^{1+\theta},
	\qquad
	\vk \in\F.
\]
\end{proposition}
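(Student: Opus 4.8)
The plan is to exploit the tensor-product structure of everything in sight and reduce the multivariate bound to a univariate one. Since $T_\vk(\vy) = \vy^\vk = \prod_{m=1}^M y_m^{k_m}$ is a pure tensor product of univariate monomials, and since $S_\Lambda = \sum_{\vi \in \Lambda} \Delta_\vi = \sum_{\vi \in \Lambda} \bigotimes_{m=1}^M \Delta_{i_m}$, I would first aim to replace the supremum over downward-closed (or arbitrary) $\Lambda$ with a full tensor-product operator. The key observation is that for the monomial $T_\vk$, the univariate detail operators annihilate all but finitely many terms: by \eqref{eq:Delta_P}, $\Delta_{i_m}(y_m^{k_m}) = 0$ whenever $i_m > \m^{-1}(k_m)$, and since $\m(i) \geq i$ forces $\m^{-1}(k_m) \leq k_m$, only indices $i_m \leq k_m$ contribute. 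Hence for \emph{any} $\Lambda$ the sum $S_\Lambda T_\vk$ is dominated (termwise, after taking norms) by the full tensor sum over the finite box $\{\vi : i_m \leq k_m\}$.

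Next I would carry out the estimate on this finite box. Writing $S_\Lambda T_\vk = \sum_{\vi \in \Lambda} \bigotimes_m \Delta_{i_m}(y_m^{k_m})$ and bounding in $C(\vGamma;\bbR)$, the tensor structure gives
\[
	\|S_\Lambda T_\vk\|_{C(\vGamma;\bbR)}
	\leq
	\sum_{\vi : i_m \leq k_m}
	\prod_{m=1}^M \|\Delta_{i_m}(y_m^{k_m})\|_{C(\Gamma;\bbR)}
	=
	\prod_{m=1}^M
	\left(
	\sum_{i=0}^{k_m} \|\Delta_i(y^{k_m})\|_{C(\Gamma;\bbR)}
	\right),
\]
where I have used that the sum over the product box factorizes. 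It then remains to bound each univariate factor $\sum_{i=0}^{k} \|\Delta_i(y^{k})\|_{C(\Gamma;\bbR)}$. Since $y \mapsto y^k$ has $C(\Gamma;\bbR)$-norm equal to $1$ on $\Gamma = [-1,1]$, the definition of the operator norm \eqref{equ:detail_opnorm} yields $\|\Delta_i(y^k)\|_{C(\Gamma;\bbR)} \leq \|\Delta_i\|_\infty \leq (1+ci)^\theta$. Summing over $i = 0, \ldots, k$ and bounding each summand by the largest one, $(1+ck)^\theta$, gives a factor of at most $(k+1)(1+ck)^\theta$. Using $c > 1$ so that $k+1 \leq 1 + ck$, this collapses to $(1+ck)^{1+\theta}$, which reproduces exactly the claimed per-factor bound $(1 + ck_m)^{1+\theta}$.

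The main obstacle, and the only place requiring genuine care, is handling the supremum over $\Lambda$ uniformly: one must check that the bound obtained for the full box $\{\vi : i_m \leq k_m\}$ indeed dominates $\|S_\Lambda T_\vk\|$ for every $\Lambda$, not merely for downward-closed ones. This follows because passing to norms and using the triangle inequality makes the bound monotone in $\Lambda$ — enlarging $\Lambda$ can only add non-negative terms to the right-hand side — so the supremum is attained (in the upper bound) at the full box, and all vanishing terms with $i_m > k_m$ contribute nothing. A secondary technical point is the careful use of the hypothesis $c > 1$ to absorb the counting factor $k+1$ into $(1+ck)$; this is why the proposition assumes $1 < c$ rather than merely $0 \leq c$. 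Everything else is a routine factorization of a tensor-product estimate.
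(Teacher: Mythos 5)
Your proof is correct and follows essentially the same route as the paper's: both reduce, via the annihilation property \eqref{eq:Delta_P} and $\m^{-1}(k_m)\leq k_m$, to the finite box $\mc R_\vk$, apply the triangle inequality together with the operator-norm bound $\|\Delta_i\|_\infty\leq(1+ci)^\theta$, and use $c>1$ to absorb the counting factor $\prod_{m=1}^M(1+k_m)$ into $\prod_{m=1}^M(1+ck_m)$. The only cosmetic difference is that you factorize the box sum into a product of univariate sums, whereas the paper bounds it by $|\mc R_\vk|$ times the maximal term --- the resulting estimate is identical.
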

\begin{proof}
First, notice that with $\m^{-1}$ as in \eqref{eq:m_inverse} and using \eqref{eq:Delta_P} we have
\[
	\Delta_\vi T_\vk
	= \prod_{m=1}^M
	\Delta_{i_m} T_{k_m}
	\equiv 0
\]
if $i_m$ is such that $\m(i_m-1) \geq k_m$, i.e., if $i_m > \m^{-1}(k_m)$ for any $m$.
Thus, with $\mc R_\vk := \{\vj \in \F\colon j_m\leq k_m \ \forall m = 1,\ldots,M\}$, we obtain
\[
	\sup_{\Lambda \subseteq \F} \|S_\Lambda T_\vk\|_{C(\vGamma;\bbR)}
	=
	\max_{\Lambda \subseteq \mc R_{\m^{-1}(\vk)}} \|S_\Lambda T_\vk\|_{C(\vGamma;\bbR)},
\]
where $\m^{-1}(\vk) = (\m^{-1}(k_1),\ldots,\m^{-1}(k_M))\in\bbN_0^M$.
Moreover, the triangle inequality yields
\[
  \|S_\Lambda T_\vk\|_{C(\vGamma;\bbR)}
   \leq \sum_{\vi \in \Lambda} \|\Delta_\vi T_\vk\|_{C(\vGamma;\bbR)}
   \leq \sum_{\vi \in \Lambda} \|\Delta_\vi\|_\infty \ \|T_\vk\|_{C(\vGamma;\bbR)}
   \leq\sum_{\vi \in \Lambda} \prod_{m =1}^M (1 + ci_m )^\theta.
 \]
Since we are considering $\Lambda$ to be a subset of $\mc R_{\m^{-1}(\vk)}$, we can further bound the last term as follows
\[
\sum_{\vi \in \Lambda} \prod_{m =1}^M (1 + ci_m )^\theta
\leq\sum_{\vi \in \mc R_{\m^{-1}(\vk)}} \prod_{m = 1}^M (1 + ck_m )^\theta
\leq |\mc R_\vk| \prod_{m = 1}^M (1 + ck_m )^\theta
=\prod_{m = 1}^M (1 + ck_m )^{1+\theta},
\]
since $|\mc R_{\m^{-1}(\vk)}| \leq |\mc R_\vk| = \prod_{m = 1}^M (1 + k_m )$.
\end{proof}


Furthermore, we require a rather general result on the summability of sequences on $\F$.
\begin{lemma}[{\cite[Lemmas 2 and 3]{CohenMigliorati2018}}]\label{lem:cohen}
For any $0 < q < 1$, one has
\[
	\vrho \in \bbR^M \text{ and } \min_{m=1,\ldots,M}|\rho_m|  > 1
	\quad \Longleftrightarrow \quad
	\left( \vrho^{-\vk} \right)_{\vk\in\F} \in \ell^q(\F).
\]
Moreover, for any $0 < q < 1$ and any algebraic factor
\[
	\beta(\vk) := \prod_{m = 1}^M (1 + c k_m)^{\theta}, \qquad \vk\in\F,
\]
with finite $c, \theta \geq 0$, one has
\[
	\vrho \in \bbR^M \text{ and } \min_{m=1,\ldots,M}|\rho_m|  > 1
	\quad \Longleftrightarrow \quad
	\left( \beta(\vk) \ \vrho^{-\vk}\right)_{\vk\in\F} \in \ell^q(\F).
\]
\end{lemma}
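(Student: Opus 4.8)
The plan is to reduce the multi-index summability question over $\F=\bbN_0^M$ to a product of $M$ univariate series. In both parts the summand factorizes over the coordinates, since $|\vrho^{-\vk}|^q=\prod_{m=1}^M|\rho_m|^{-qk_m}$ and $\bigl(\beta(\vk)\,|\vrho^{-\vk}|\bigr)^q=\prod_{m=1}^M(1+ck_m)^{\theta q}\,|\rho_m|^{-qk_m}$, and every term is non-negative. Tonelli's theorem (for the counting measure on the countable product $\bbN_0^M$) then lets me write
\[
	\sum_{\vk\in\F} |\vrho^{-\vk}|^q
	=
	\prod_{m=1}^M \sum_{k=0}^\infty |\rho_m|^{-q k},
\]
and, for the weighted sequence,
\[
	\sum_{\vk\in\F} \bigl(\beta(\vk)\,|\vrho^{-\vk}|\bigr)^q
	=
	\prod_{m=1}^M \sum_{k=0}^\infty (1+ck)^{\theta q}\,|\rho_m|^{-q k},
\]
with the convention that a divergent factor makes the product $+\infty$. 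Thus in both cases the sequence lies in $\ell^q(\F)$ if and only if each of the $M$ univariate series converges.

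For the first statement each univariate series is geometric with ratio $|\rho_m|^{-q}$; since $q>0$, it converges (to $(1-|\rho_m|^{-q})^{-1}$) exactly when $|\rho_m|^{-q}<1$, i.e.\ when $|\rho_m|>1$, and diverges otherwise. Hence the product is finite iff $|\rho_m|>1$ for every $m$, which is the asserted equivalence with $\min_m|\rho_m|>1$. For the second statement the univariate series carries the extra algebraic weight $(1+ck)^{\theta q}$, and I would dispatch it by the ratio test: consecutive terms have ratio tending to $|\rho_m|^{-q}$ because $(1+c(k+1))^{\theta q}/(1+ck)^{\theta q}\to 1$, so the series again converges precisely when $|\rho_m|>1$; conversely, when $|\rho_m|\le 1$ the general term is bounded below by $1$ (and in fact does not tend to zero), so the term test forces divergence. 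The polynomial factor therefore leaves the convergence threshold unchanged, and the same product criterion yields the equivalence. (Nothing here uses $q<1$, so the statement in fact holds for all $q>0$.)

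The argument is elementary, so the only points deserving genuine care are the two that make each implication an equivalence. First, the factorization must be justified rigorously as Tonelli for non-negative summands, which is exactly what allows divergence of a single factor to force the whole sequence out of $\ell^q(\F)$, giving the ``only if'' direction cleanly. Second, one must confirm the term-test divergence whenever some $|\rho_m|\le 1$, which is immediate once the geometric/polynomial structure of each factor is isolated. I expect the main (still mild) obstacle to be the bookkeeping of the algebraic factor $\beta$ in the second part, namely verifying that a polynomial weight neither creates nor destroys summability because it is dominated by any exponential gain $|\rho_m|^{-q}<1$. This is precisely the property needed downstream, where $\beta$ must absorb the algebraically growing operator-norm bounds $\|\Delta_\vi\|_\infty$ while preserving $\ell^q$-summability of the weighted coefficient sequence.
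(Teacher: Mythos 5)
Your proof is correct. Note, however, that the paper itself gives no proof of this lemma: it is imported verbatim from Cohen--Migliorati \cite{CohenMigliorati2018}, with only the remark that the original statement concerns countable sequences $\vrho = (\rho_m)_{m\in\bbN}$. So your elementary tensorization argument is a genuinely self-contained alternative to the citation. The comparison is instructive: since $M$ is finite and all summands are non-negative, your Tonelli (equivalently, monotone-convergence-over-boxes) factorization $\sum_{\vk\in\F}\bigl(\beta(\vk)\,|\vrho^{-\vk}|\bigr)^q = \prod_{m=1}^M \sum_{k\ge 0}(1+ck)^{\theta q}|\rho_m|^{-qk}$ reduces everything to $M$ univariate series, each handled by the ratio test (convergence when $|\rho_m|>1$) and the term test (divergence when $|\rho_m|\le 1$, since every univariate factor is bounded below by $1$, so a single divergent factor forces the full sum to $+\infty$). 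Your parenthetical observation that nothing uses $q<1$ is also correct, and it pinpoints exactly where the finite- and infinite-dimensional settings diverge: in the countable case treated by Cohen--Migliorati, the equivalence requires in addition that $(\rho_m^{-1})_{m\in\bbN}$ itself be $\ell^q$-summable (the product over infinitely many $m$ no longer trivially converges), which is why the cited lemmas carry conditions that are vacuous for finite $M$. What your route buys is a transparent, elementary proof of precisely the finite-$M$ statement the paper needs, valid for all $q>0$; what the paper's citation buys is the stronger infinite-dimensional result without additional work, which is relevant to the extension to $M=\infty$ discussed in the paper's conclusions. One small point of rigor worth making explicit if you write this up: the implication from $\ell^q$-summability back to $\min_m|\rho_m|>1$ tacitly requires $\rho_m\neq 0$ for $\vrho^{-\vk}$ to be defined at all, which your divergence argument otherwise covers cleanly.
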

Note that the original statement in \cite[Lemmas 2 and 3]{CohenMigliorati2018} is for the case of countable sequences $\vrho = (\rho_m)_{m\in\bbN} \in \ell^q(\bbN)$.

\bigskip

The last auxiliary result provides a simple estimate for the tails of converging series of the same form
$\left( \beta(\vk) \ \vrho^{-\vk}\right)_{\vk\in\F}$ as considered in the previous lemma.

\begin{proposition}\label{propo:tail_bound}
Let $\vrho \in \bbR^M$ be a vector of numbers $\rho_m>1$, $m=1,\ldots,M$, and 
\[
	\beta(\vk) := \prod_{m = 1}^M (1 + c k_m)^{\theta}, \qquad \vk\in\F,
\]
an algebraic factor with finite $c, \theta \geq 0$.
Then, we have for any $\vk \in \F$
\begin{equation}\label{equ:tail_bound}
	\sum_{\vj \geq \vk} \beta(\vj) \vrho^{-\vj} \leq 
	C\ \beta(\vk) \vrho^{-\vk},
	\qquad
	 C:= \sum_{\vk \in\F} \beta(\vk) \vrho^{-\vk} <\infty
\end{equation}

\end{proposition}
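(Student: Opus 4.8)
The plan is to exploit the product structure of both $\beta$ and $\vrho^{-\vk}$ together with a submultiplicativity property of the algebraic weight $\beta$. First, the finiteness of the constant $C = \sum_{\vk \in \F}\beta(\vk)\vrho^{-\vk}$ is immediate from Lemma \ref{lem:cohen}: since $\rho_m > 1$ for every $m=1,\ldots,M$, the sequence $\left(\beta(\vk)\vrho^{-\vk}\right)_{\vk \in \F}$ lies in $\ell^q(\F)$ for any $0 < q < 1$, and in particular in $\ell^1(\F)$, so $C < \infty$ and the right-hand side of \eqref{equ:tail_bound} is well defined.

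Next I would reindex the tail sum by writing $\vj = \vk + \vi$ with $\vi \in \F$, which turns the index set $\{\vj \in \F : \vj \geq \vk\}$ into a sum over all $\vi \in \F$ and gives $\vrho^{-\vj} = \vrho^{-\vk}\,\vrho^{-\vi}$. The crucial ingredient is then the submultiplicativity $\beta(\vk + \vi) \leq \beta(\vk)\,\beta(\vi)$. This follows componentwise: for each $m$ one has
\[
	1 + c(k_m + i_m) \leq (1 + ck_m)(1 + ci_m) = 1 + ck_m + ci_m + c^2 k_m i_m,
\]
since $c^2 k_m i_m \geq 0$; raising to the power $\theta \geq 0$ preserves the inequality, and taking the product over $m = 1,\ldots,M$ yields $\beta(\vk+\vi) \leq \beta(\vk)\beta(\vi)$.

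Combining these two observations gives
\[
	\sum_{\vj \geq \vk} \beta(\vj)\,\vrho^{-\vj}
	= \sum_{\vi \in \F} \beta(\vk + \vi)\,\vrho^{-(\vk+\vi)}
	\leq \beta(\vk)\,\vrho^{-\vk} \sum_{\vi \in \F}\beta(\vi)\,\vrho^{-\vi}
	= C\,\beta(\vk)\,\vrho^{-\vk},
\]
which is precisely \eqref{equ:tail_bound}. The only nontrivial step is recognizing the submultiplicativity of $\beta$; everything else is bookkeeping of the product structure. I do not anticipate any genuine obstacle, as the estimate is in essence a multivariate geometric-series tail bound whose algebraically growing prefactor is tamed by the elementary inequality above.
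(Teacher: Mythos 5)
Your proof is correct and follows essentially the same route as the paper: the paper factors the multivariate tail into univariate tails, shifts each index, and bounds $\left(\frac{1+cj+ck_m}{1+ck_m}\right)^{\theta} \leq (1+cj)^{\theta}$, which is exactly your submultiplicativity inequality $1+c(k_m+i_m)\leq(1+ck_m)(1+ci_m)$ in disguise. Your direct multivariate reindexing $\vj = \vk+\vi$ merely reorganizes the same computation, and the appeal to Lemma~\ref{lem:cohen} for $C<\infty$ matches the paper as well.
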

\begin{proof}
First, note that by Lemma \ref{lem:cohen} the constant $C$ defined in \eqref{equ:tail_bound} is indeed finite.
By refactoring, we have
\[
  \sum_{\vj \geq \vk} \beta(\vj) \vrho^{-\vj} =
  \sum_{\vj \geq \vk} \prod_{m = 1}^M (1 + c j_m)^{\theta} \rho_m^{-j_m}
  =
  \prod_{m=1}^M \left( \sum_{j_m \geq k_m} (1 + c j_m)^{\theta}\rho_m^{-j_m}\right).
\]
We then obtain for each $m=1,\ldots,M$, 
\begin{align*}
	\sum_{j_m \geq k_m} (1 + c j_m)^{\theta}\rho_m^{-j_m}
	& =  (1 + c k_m)^{\theta} \ \rho_m^{-k_m} \ \sum_{j = 0}^{\infty} \left( \frac{1 + c j + c k_m}{1 + c k_m}\right)^{\theta} \rho_m^{-j}\\
	& \leq (1 + c k_m)^{\theta} \ \rho_m^{-k_m} \ \sum_{j = 0}^{\infty} \left(1 + c j\right)^{\theta} \rho_m^{- j}.
\end{align*}
Thus, the refactoring argument can be continued as
\begin{align*}
  	\sum_{\vj \geq \vk} \beta(\vj) \vrho^{-\vj} 
  	& = 
  	\sum_{\vj \geq \vk} \prod_{m = 1}^M (1 + c j_m)^{\theta} \rho_m^{-j_m}\\
	& \leq
	\prod_{m=1}^M \left((1 + c k_m)^{\theta} \ \rho_m^{-k_m} \sum_{j_m \geq 0} (1 + c j_m)^{\theta}\rho_m^{-j_m}\right)\\
	& = \beta(\vk) \vrho^{-\vk} \sum_{\vj \geq 0} \prod_{m = 1}^M (1 + c j_m)^{\theta} \rho_m^{-j_m}
	= C\ \beta(\vk) \vrho^{-\vk},
\end{align*}
with $C$ as in Equation \eqref{equ:tail_bound}.
\end{proof}


\subsection{Proof of Theorem \ref{theo:GN_conv}} \label{sec:convergence proof}

\begin{proof}
We prove Theorem \ref{theo:GN_conv} by applying Theorem \ref{theo:abstract_adapt}.
To this end, we need to verify the three assumptions of Theorem \ref{theo:abstract_adapt}.
The first holds due to Proposition \ref{propo:diane} and the third by construction, cf. Remark \ref{rem:abstract_theorem}.
Hence, it remains to verify the second assumption.
To this end, we set
\begin{equation}\label{equ:eta_hat}
	\widehat \eta_n(\vk) 
	:= 
	\begin{cases}
	\|\Delta_\vk (a \nabla S_{\Lambda_n} u) \|_{L^p_\mu(\vGamma; L^2(D))}, & \vk \in \Lambda_n \cup \Cand_n\\
	0, & \text{ otherwise},
	\end{cases}
\end{equation}
and proceed in two steps (see also Remark \ref{rem:abstract_theorem}):
\begin{enumerate}
\item
We define the (formal) limit
\begin{equation}\label{equ:u_inf}
	u_\infty 
	:=
	\sum_{\vk \in \Lambda_\infty} \Delta_\vk u,
	\qquad
	\Lambda_\infty := \bigcup_{n\in\bbN} \Lambda_n,
\end{equation}
and verify in Lemma \ref{lem:u_inf} below that $u_\infty \in C(\vGamma;H_0^1(D))$ as well as 
\[
	\lim_{n\to \infty}
	\|u_\infty - u_n\|_{C(\vGamma;H_0^1(D))}
	=
	0.
\]

\item
We then set
\begin{equation}\label{equ:eta_inf}
	\eta_\infty(\vk) 
	:= 
	\begin{cases}
	\|\Delta_\vk (a \nabla u_\infty) \|_{L^p_\mu(\vGamma; L^2(D))}, & \vk \in \Lambda_\infty \cup \Marg{\Lambda_\infty},\\
	0, & \text{ otherwise,}
	\end{cases}
\end{equation}
and show in Lemma \ref{lem:eta_inf} that
\[
	\lim_{n\to\infty} \|\eta_\infty - \widehat \eta_n \|_{\ell^1} = 0,
\]
\end{enumerate}
which concludes the proof.
\end{proof}

\begin{lemma}\label{lem:u_inf}
Given the assumptions of Theorem \ref{theo:GN_conv}, \rev{the $u_n$, $n\in\mathbb N$ form a Cauchy sequence in $C(\vGamma;H_0^1(D))$. 
In particular, $u_\infty$ given in \eqref{equ:u_inf} is its well-defined limit in $C(\vGamma;H_0^1(D))$.}
\end{lemma}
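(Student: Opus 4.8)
The plan is to reduce the Cauchy property to the \emph{absolute} summability of the detail operators applied to $u$, i.e.\ to the estimate
\[
	\sum_{\vk \in \F} \|\Delta_\vk u\|_{C(\vGamma;\mc H)} < \infty .
\]
Once this is available, the conclusion is routine: since $C(\vGamma;\mc H)$ is a Banach space and the sets $\Lambda_n$ are nested, for $m > n$ we have $\|u_m - u_n\|_{C(\vGamma;\mc H)} = \|\sum_{\vi \in \Lambda_m \setminus \Lambda_n} \Delta_\vi u\|_{C(\vGamma;\mc H)} \leq \sum_{\vi \notin \Lambda_n} \|\Delta_\vi u\|_{C(\vGamma;\mc H)}$, which is the tail of a convergent series and hence tends to $0$ as $n\to\infty$; this gives the Cauchy property. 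The same tail estimate, now summed over $\Lambda_\infty \setminus \Lambda_n$, shows that $u_n \to u_\infty$ with $u_\infty$ as in \eqref{equ:u_inf}, so that $u_\infty$ is the well-defined limit in $C(\vGamma;\mc H)$.

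It therefore remains to prove the displayed summability. First I would expand $u$ in its Taylor series $u = \sum_{\vj \in \F} t_\vj T_\vj$ (with $T_\vj(\vy) = \vy^\vj$), which converges absolutely in $C(\vGamma;\mc H)$ because $(\vrho^\vj \|t_\vj\|_{\mc H})_{\vj} \in \ell^2(\F) \subset \ell^\infty(\F)$ by Theorem \ref{cor:Bachmayr_finite}, yielding $\|t_\vj\|_{\mc H} \leq M_0 \vrho^{-\vj}$ for some finite $M_0$ and a fixed $\vrho$ with $1 < \rho_m < \alpha^{-1}$. Since $\Delta_\vk$ is continuous on $C(\vGamma;\mc H)$, I may apply it termwise, $\Delta_\vk u = \sum_{\vj} t_\vj \Delta_\vk T_\vj$. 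The crucial structural observation is that $\Delta_\vk T_\vj = \prod_{m=1}^M \Delta_{k_m} T_{j_m}$ vanishes unless $\vj \geq \vk$ componentwise: by the annihilation property \eqref{eq:Delta_P} together with the monotonicity of the growth function $\m$, a factor $\Delta_{k_m} T_{j_m}$ can be nonzero only if $j_m > \m(k_m - 1) \geq k_m - 1$, i.e.\ $j_m \geq k_m$. Combining this with Proposition \ref{propo:Delta_vi}, assumption \eqref{equ:Delta_Lebesgue}, and $\|T_\vj\|_{C(\vGamma;\bbR)} = 1$, I obtain the per-index bound
\[
	\|\Delta_\vk u\|_{C(\vGamma;\mc H)}
	\leq
	\|\Delta_\vk\|_\infty \sum_{\vj \geq \vk} \|t_\vj\|_{\mc H}
	\leq
	\beta(\vk) \sum_{\vj \geq \vk} \|t_\vj\|_{\mc H},
	\qquad
	\beta(\vk) := \prod_{m=1}^M (1 + c k_m)^\theta .
\]

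Summing this over $\vk$ then finishes the argument. Using $\|t_\vj\|_{\mc H} \leq M_0 \vrho^{-\vj}$ and the geometric tail estimate $\sum_{\vj \geq \vk} \vrho^{-\vj} \leq C_1 \vrho^{-\vk}$ gives $\|\Delta_\vk u\|_{C(\vGamma;\mc H)} \leq M_0 C_1\, \beta(\vk)\, \vrho^{-\vk}$, and then Lemma \ref{lem:cohen} (or, equivalently, Proposition \ref{propo:tail_bound} evaluated at $\vk = \boldsymbol 0$) shows that $\sum_{\vk \in \F} \beta(\vk) \vrho^{-\vk} < \infty$ because each $\rho_m > 1$. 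This establishes the desired absolute summability. I expect the main obstacle to be the per-index estimate rather than the final summation: the naive bound $\|\Delta_\vk u\| \leq \|\Delta_\vk\|_\infty \|u\|$ carries \emph{no} decay in $\vk$ (indeed $\|\Delta_\vk\|_\infty$ grows), so the whole point is to use the Taylor expansion to expose the factor $\vrho^{-\vk}$ via the support condition $\vj \geq \vk$; getting the annihilation bookkeeping exactly right (in particular the interplay between $\m$, $\m^{-1}$, and the cases $k_m = 0$ versus $k_m \geq 1$) is the delicate point, whereas the summability of $\beta(\vk)\vrho^{-\vk}$ is handed to us by the auxiliary results.
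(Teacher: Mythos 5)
Your proof is correct, but it takes a genuinely different route from the paper's. The paper never establishes per-index decay of $\|\Delta_\vk u\|_{C(\vGamma;\mc H)}$: it writes $u_m-u_n=\sum_{\vk\in\F} t_\vk\, S_{\Lambda_m\setminus\Lambda_n}T_\vk$, applies Cauchy--Schwarz against the $\ell^2$-weights $\vrho^{\vk}\|t_\vk\|_{\mc H}$ from Theorem \ref{cor:Bachmayr_finite}, bounds $\|S_{\Lambda_m\setminus\Lambda_n}T_\vk\|_{C}$ by a quantity $g_n(\vk)$ supported on the rectangle $\mc R_{\m^{-1}(\vk)}$ and dominated by $\prod_{m}(1+ck_m)^{1+\theta}$ (thus accepting a cardinality factor), observes that $g_n(\vk)\to 0$ pointwise because $(\Lambda_\infty\setminus\Lambda_n)\cap\mc R_{\m^{-1}(\vk)}$ is finite and eventually empties, and concludes via dominated convergence in the index $\vk$. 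You instead prove the stronger statement $\sum_{\vk\in\F}\|\Delta_\vk u\|_{C(\vGamma;\mc H)}<\infty$, extracting the decay factor $\vrho^{-\vk}$ directly from the support condition $\vj\geq\vk$ (which you verify correctly from \eqref{eq:Delta_P}, including the interplay of $\m$, $\m^{-1}$ and the case $k_m=0$), combined with the $\ell^\infty$ bound $\|t_\vj\|_{\mc H}\leq M_0\vrho^{-\vj}$ and the geometric tail estimate (Proposition \ref{propo:tail_bound} with $\theta=0$). This buys \emph{absolute} (unconditional) convergence of the hierarchical series, so the limit is independent of how $\Lambda_\infty$ is exhausted and no dominated-convergence step in $n$ is needed; it is in effect the Taylor-polynomial specialization ($\widetilde\theta=0$) of the estimate \eqref{equ:surplus_bound} that the paper proves later in Lemma \ref{lem:GG_aux} for the Gerstner--Griebel analysis, transplanted to the $C(\vGamma;\mc H)$-norm. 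The paper's argument, by contrast, gets decay only from the weights via Cauchy--Schwarz and never needs summability of $\|\Delta_\vk u\|_{C}$ itself.

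One small slip to repair: for the Cauchy property you bound $\|u_m-u_n\|_{C}\leq\sum_{\vi\notin\Lambda_n}\|\Delta_\vi u\|_{C}$ and call this a tail of a convergent series tending to zero. Since the complements $\F\setminus\Lambda_n$ decrease to $\F\setminus\Lambda_\infty$, which is in general nonempty, that limit equals $\sum_{\vi\notin\Lambda_\infty}\|\Delta_\vi u\|_{C}$ and need not vanish. The fix is immediate and already implicit in your next sentence: since $\Lambda_m\setminus\Lambda_n\subset\Lambda_\infty\setminus\Lambda_n$ and $\Lambda_\infty\setminus\Lambda_n\downarrow\emptyset$, the correct bound $\|u_m-u_n\|_{C}\leq\sum_{\vi\in\Lambda_\infty\setminus\Lambda_n}\|\Delta_\vi u\|_{C}$ does tend to zero by the absolute summability you established.
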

\begin{proof}
We abbreviate the norms in $C(\vGamma;H_0^1(D))$ and $C(\vGamma;\bbR)$ by $\|\cdot\|_{C}$.
\rev{Furthermore, let $\vrho \in\bbR^M$ be such that $1 < \rho_m < \alpha^{-1}$ as in equation \eqref{equ:alpha} and let $T_\vk$ and $t_\vk$, $\vk \in \F$, denote the multivariate Taylor polynomials and the corresponding Taylor coefficients of $u$, respectively. For $n,m\in\mathbb{N}$ with $n\leq m$ we obtain by the triangle and Cauchy--Schwarz inequalities
\begin{align*}
	\|u_m - u_n\|_C
	& 
	=
	\left\| S_{\Lambda_m\setminus \Lambda_n} u \right\|_C
	=
	\left\| \sum_{\vk \in \F} t_\vk S_{\Lambda_m\setminus \Lambda_n} T_\vk\right\|_C
	\leq \sum_{\vk \in \F} \|t_\vk\|_{\mc H} \left\| S_{\Lambda_m\setminus \Lambda_n} T_\vk \right\|_{C}\\
	& 
	\leq \left( \sum_{\vk \in \F} \vrho^{2\vk}\ \|t_\vk\|^2_{\mc H} \right)^{1/2} \  \left( \sum_{\vk \in \F} \vrho^{-2\vk} \ \left\| S_{\Lambda_m\setminus \Lambda_n} T_\vk \right\|_{C}^2\right)^{1/2},
\end{align*}
where by Theorem \ref{cor:Bachmayr_finite}
\begin{equation}\label{equ:C_u_rho}
	C_{u,\vrho} := \left( \sum_{\vk \in \F} \vrho^{2\vk}\ \|t_\vk\|^2_{\mc H} \right)^{1/2} < \infty.
\end{equation}
Since $\Delta_\vi T_\vk = 0$ if $i_m > \m^{-1}(k_m)$ for any $m$ we have by Proposition \ref{propo:Delta_vi} and the assumptions that
\begin{align*}
	\left\| S_{\Lambda_m\setminus \Lambda_n} T_\vk \right\|_{C}
	& \leq
	\sum_{\vi \in \Lambda_m\setminus \Lambda_n} \left\| \Delta_\vi T_\vk \right\|_{C}
	\leq
	\sum_{\vi \in \Lambda_\infty\setminus \Lambda_n} \left\| \Delta_\vi T_\vk \right\|_{C}\\
	& = \sum_{\vi \in (\Lambda_\infty\setminus \Lambda_n) \cap \mc R_{\m^{-1}(\vk)}} \left\| \Delta_\vi T_\vk \right\|_{C}\\
	& \leq 
	g_n(\vk) := \sum_{\vi \in (\Lambda_\infty\setminus \Lambda_n) \cap \mc R_{\m^{-1}(\vk)}} \prod_{m=1}^M (1+ck_m)^\theta,
\end{align*}
where $\mc R_{\m^{-1}(\vk)} = \{ \vi \in \F\colon \vi \leq \m^{-1}(\vk)\}$. 
Since for any of the finitely many $\vi \in (\Lambda_\infty\setminus \Lambda_n) \cap \mc R_{\m^{-1}(\vk)}$ there exists an $n_0\in\bbN$ such that $\vi \in \Lambda_n$ for all $n\geq n_0$, we obtain
\[
	\lim_{n\to \infty} g_n(\vk) 
	=
	\lim_{n\to \infty} g^2_n(\vk) 
	=
	0
	\qquad
	\forall \vk \in \F.
\]
Moreover, we conclude as in the proof of Proposition \ref{propo:S_Tk}
\begin{align*}
	g_n(\vk) 
	& 
	\leq \sum_{\vi \in \mc R_{\m^{-1}(\vk)}} \prod_{m=1}^M (1+ck_m)^\theta
	\leq g(\vk) := \prod_{m=1}^M (1+ck_m)^{1+\theta}.
\end{align*}
By Lemma \ref{lem:cohen} we have
\[
	\sum_{\vk \in \F} \vrho^{-2\vk} \  g(\vk)^2 < \infty,
\]
so that $g^2\colon \F \to [0,\infty)$ serves as a summable dominating mapping of the $g^2_n\colon \F \to [0,\infty)$ and we obtain by Lebesgue's dominated convergence theorem
\begin{align*}
	\lim_{n\to\infty}
	\sum_{\vk \in \F} \vrho^{-2\vk} \ g_n(\vk)^2
	=
	0.
\end{align*}
Thus, since
\[
	\|u_m - u_n\|^2_C
	\leq
	C_{u,\vrho}^2
	\sum_{\vk \in \F} \vrho^{-2\vk} \ g_n(\vk)^2
	\qquad \forall m\geq n,
\]
we conclude that the approximations $u_n = \sum_{\vi \in \Lambda_n} \Delta_\vi u$ form a Cauchy sequence in the (complete) Banach space $C(\vGamma;H_0^1(D))$ with $u_\infty = \sum_{\vi \in \Lambda_\infty} \Delta_\vi u$ as its limit, since $\Lambda_n \uparrow \Lambda_\infty$.
}
\end{proof}

For the second step of the proof of Theorem \ref{theo:GN_conv}, we first state an important lemma concerning the decay of the error estimators.

\begin{lemma}\label{lem:eta_bound}
Let the assumptions of Theorem \ref{theo:GN_conv} be satisfied and let $\Lambda\subset \F$ be an arbitrary monotone subset.
Then there exists a constant $C = C(M,\vrho,c,\theta,a)<\infty$ such that for
\[
	\eta(\vk, S_\Lambda u)
	:=
	\|\Delta_\vk (a \nabla S_\Lambda u) \|_{L^p_\mu(\vGamma; L^2(D))},
	\qquad
	\vk\in\F,
\]
we have for any  $\vk \in \F$
\[
	\eta(\vk, S_\Lambda u)
	\leq
	C\ g(\vk), \qquad
	g(\vk)
	:=
	\left( \prod_{m=1}^M (1+c k_m)^{2\theta+1} \right) \ \vrho^{-\vk}.
\]
\end{lemma}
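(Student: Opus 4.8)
The plan is to expand $S_\Lambda u$ in the Taylor basis and exploit the affine structure of $a$ together with the annihilation properties of the detail operators. Writing $u=\sum_{\vj\in\F}t_\vj T_\vj$ (the Taylor expansion \eqref{equ:Poly_Exp}, which converges in $C(\vGamma;\mc H)$ by Theorem \ref{cor:Bachmayr_finite}) and using that $S_\Lambda$ is a bounded operator on $C(\vGamma;\mc H)$, I would first write $S_\Lambda u=\sum_\vj t_\vj\,S_\Lambda T_\vj$ and, since $a=a_0+\sum_{m=1}^M a_m y_m$ is affine in $\vy$ while $a_0,a_m,\nabla t_\vj$ are independent of $\vy$, push $\Delta_\vk$ inside to obtain
\[
  \Delta_\vk(a\nabla S_\Lambda u)
  =\sum_{\vj}\Big[a_0\,\nabla t_\vj\,\Delta_\vk[S_\Lambda T_\vj]
   +\sum_{m=1}^M a_m\,\nabla t_\vj\,\Delta_\vk[y_m S_\Lambda T_\vj]\Big].
\]
Since $\mu$ is a probability measure, $\|\cdot\|_{L^p_\mu}\leq\|\cdot\|_{L^\infty_\mu}=\|\cdot\|_{C}$, so after the triangle inequality and $\|a_m\nabla t_\vj\|_{L^2(D)}\leq\|a_m\|_{L^\infty(D)}\|t_\vj\|_{\mc H}$ the problem reduces to estimating
\[
  \eta(\vk,S_\Lambda u)\leq C_a\sum_{\vj}\|t_\vj\|_{\mc H}
  \Big[\|\Delta_\vk[S_\Lambda T_\vj]\|_{C}
   +\sum_{m=1}^M\|\Delta_\vk[y_m S_\Lambda T_\vj]\|_{C}\Big],
  \qquad C_a:=\|a_0\|_{L^\infty(D)}+\textstyle\sum_m\|a_m\|_{L^\infty(D)}.
\]

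\textbf{Key step (support in $\vj$).} The crucial and most delicate point is to show that the summands vanish unless $\vj$ is large relative to $\vk$, which is what ultimately produces the decay $\vrho^{-\vk}$. Using $S_\Lambda T_\vj=\sum_{\vi\in\Lambda}\Delta_\vi T_\vj$ and that by \eqref{eq:Delta_P} the factor $\Delta_{i_m}T_{j_m}$ vanishes unless $i_m\leq\m^{-1}(j_m)$, in which case it has degree at most $\m(i_m)$, I would combine this with the fact that $\Delta_{k_m}$ annihilates every univariate polynomial of degree $\leq\m(k_m-1)$ (with no constraint arising when $k_m=0$). A short degree count then gives $\Delta_\vk[S_\Lambda T_\vj]\neq0\Rightarrow\vj\geq\vk$ and $\Delta_\vk[y_m S_\Lambda T_\vj]\neq0\Rightarrow\vj\geq\vk-\ve_m$ (the extra factor $y_m$ raises the degree in the $m$-th variable by one, relaxing the constraint by $\ve_m$). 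On this support I bound, uniformly in $\Lambda$, $\|\Delta_\vk[S_\Lambda T_\vj]\|_C\leq\|\Delta_\vk\|_\infty\|S_\Lambda T_\vj\|_C$ and likewise for $y_m S_\Lambda T_\vj$ (using $|y_m|\leq1$ on $\vGamma$), then invoke Proposition \ref{propo:Delta_vi} with \eqref{equ:Delta_Lebesgue} for $\|\Delta_\vk\|_\infty\leq\prod_m(1+ck_m)^\theta$ and Proposition \ref{propo:S_Tk} for $\sup_\Lambda\|S_\Lambda T_\vj\|_C\leq\prod_m(1+cj_m)^{1+\theta}$.

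\textbf{Summation.} It then remains to sum tails of the form $\sum_{\vj\geq\vk'}\|t_\vj\|_{\mc H}\prod_m(1+cj_m)^{1+\theta}$ with $\vk'\in\{\vk,\vk-\ve_m\}$. I would split each summand by Cauchy--Schwarz as $(\vrho^\vj\|t_\vj\|_{\mc H})(\vrho^{-\vj}\prod_m(1+cj_m)^{1+\theta})$; the first factor is controlled by $C_{u,\vrho}<\infty$ from \eqref{equ:C_u_rho} (Theorem \ref{cor:Bachmayr_finite}, valid for $1<\rho_m<\alpha^{-1}$), while the tail of the second is handled by Proposition \ref{propo:tail_bound} applied with the admissible weight $\vrho^2$ (note $\rho_m^2>1$) and algebraic factor $\prod_m(1+cj_m)^{2(1+\theta)}$, giving $\sum_{\vj\geq\vk'}\prod_m(1+cj_m)^{2(1+\theta)}\vrho^{-2\vj}\leq C'\prod_m(1+ck'_m)^{2(1+\theta)}\vrho^{-2\vk'}$. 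Since $k'_m\leq k_m$ and $\vrho^{-\vk'}\leq(\max_m\rho_m)\vrho^{-\vk}$, collecting the prefactor $\|\Delta_\vk\|_\infty\leq\prod_m(1+ck_m)^\theta$ bounds each of the $M+1$ contributions by a constant times $\prod_m(1+ck_m)^{2\theta+1}\vrho^{-\vk}$, which is exactly the asserted $g(\vk)$ with $C=C(M,\vrho,c,\theta,a)<\infty$.

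\textbf{Main obstacle.} I expect the degree/annihilation bookkeeping of the second step---precisely locating the support of $\vj\mapsto\Delta_\vk[S_\Lambda T_\vj]$ and of its shifted counterpart---to be the real crux, since it is exactly this step that converts the $\ell^2$-summability of the Taylor coefficients into the pointwise exponential decay $\vrho^{-\vk}$ of the estimator and must hold \emph{uniformly} over all monotone $\Lambda$; the remaining estimates are routine applications of Propositions \ref{propo:Delta_vi}, \ref{propo:S_Tk} and \ref{propo:tail_bound}.
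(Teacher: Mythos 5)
Your proposal is correct and takes essentially the same route as the paper's proof: Taylor-expand $u$, use the affine structure of $a$ together with the annihilation property \eqref{eq:Delta_P} to restrict the sum to multi-indices $\vj$ componentwise comparable to $\vk$, then combine Propositions \ref{propo:Delta_vi} and \ref{propo:S_Tk} with Cauchy--Schwarz (via $C_{u,\vrho}$ from Theorem \ref{cor:Bachmayr_finite}) and the tail bound of Proposition \ref{propo:tail_bound}. The only deviation is cosmetic: you split $a = a_0 + \sum_m a_m y_m$ into $M+1$ contributions with the slightly sharper support constraints $\vj \geq \vk$ and $\vj \geq \vk - \ve_m$, whereas the paper treats the multiplication by $a$ as a single global degree raise and works with the unified constraint $\vj \geq [\vk - \bs 1]_+$; both yield the same $g(\vk)$ and a constant of the same form.
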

\begin{proof}
Set $u_\Lambda := S_\Lambda u$.
By linearity $\Delta_\vk (a \nabla u_\Lambda)$ for $\vk \in \F$ can be written as
\begin{align*}
	\Delta_\vk \left[a \nabla u_\Lambda\right]
	& = 	\Delta_\vk \left[ a \sum_{\vi \in \Lambda} \Delta_\vi \nabla u\right]
	=  \sum_{\vi \in \Lambda} \Delta_\vk \left[a \Delta_\vi \nabla u\right].
\end{align*}
Moreover, using the Taylor expansion of the solution $u$ we deduce that
\begin{align}
\label{eq:taylor u}
	\Delta_\vk\left[a \Delta_\vi \nabla u\right]
	& = \Delta_\vk\left[a \Delta_\vi \sum_{\vj \in \F} (\nabla t_\vj)\  T_\vj \right]
	= \sum_{\vj \in \F} (\nabla t_\vj)\ \Delta_\vk \left[a\Delta_\vi T_\vj \right].
\end{align}
We observe that for certain combinations of $\vi$, $\vj$, and $\vk$ it holds $\Delta_\vk \left[a \Delta_\vi T_\vj \right] \equiv 0$.
First of all,
\[
	\Delta_\vi T_\vj = \prod_{m = 1}^M (\Delta_{i_m} T_{j_m}) \equiv 0  \qquad \text{ if } \ \exists m\colon j_m \leq \m(i_m-1),
\]
since then $\Delta_{i_{m}} T_{j_{m}} \equiv 0$.
Second, the function $a\Delta_\vi T_\vj$ is a polynomial in $\vy$ belonging to the space 
\[
	\mc P_{\m(\vi) + \bs 1}
	:=
	\mathrm{span}\left\{ \vy^\vp\colon p_m \leq \m(i_m) + 1 \text{ for } m=1,\ldots,M\right\},
\]
since $a$ is affine in $\vy$.
Hence,
\[
	\Delta_\vk \left[a\Delta_\vi T_\vj \right]
	\equiv 0
	\qquad
	\text{if } \ \exists m\colon  \m(i_m) + 1 \leq \m(k_m-1),
\]
We combine now both necessary conditions $\vj \geq \m(\vi-\bs 1) + \bs 1$ and $\m(\vi) + \bs 1 \geq \m(\vk - \bs 1) + \bs 1$ for $\Delta_\vk \left[a \Delta_\vi T_\vj \right] \not\equiv 0$ to 
\[
	\vj \geq \m(\vk - \bs 2)  + \bs 1
	\geq \vk - \bs 1,
\]
where the last inequality follows due to $\m(k)\geq k$.
Thus, introducing the notation $ [\vk -\bs 1]_+ := (\max\{k_m-1, 0\})_{m=1}^M$, the sum~\eqref{eq:taylor u} reduces to
\begin{align*}
	\Delta_\vk \left[a\Delta_\vi u \right]
	= 
	\sum_{\vj \geq [\vk -\bs 1]_+} (\nabla t_\vj)\ \Delta_\vk \left[a\Delta_\vi T_\vj \right].
\end{align*}
\rev{By interchanging the order of summation we obtain}
\begin{align*}
	\left\|\Delta_\vk (a \nabla u_\Lambda)\right\|_{L^p_\mu(\vGamma; L^2(D))}
	& = \left\| \sum_{\vi \in \Lambda} \Delta_\vk ( a \Delta_\vi \nabla u_\Lambda)\right\|_{L^p_\mu(\vGamma; L^2(D))}\\
	& = \left\| \sum_{\vi \in \Lambda} \sum_{\vj \geq [\vk -\bs 1]_+} (\nabla t_\vj)\ \Delta_\vk \left[a\Delta_\vi T_\vj \right] \right\|_{L^p_\mu(\vGamma; L^2(D))}\\
	& = 	\left\| \sum_{\vj \geq [\vk -\bs 1]_+} (\nabla t_\vj)\ \Delta_\vk \left[a S_{\Lambda} T_\vj \right] \right\|_{L^p_\mu(\vGamma; L^2(D))}.
\end{align*}
We now set $\beta(\vk) := \prod_{m = 1}^M (1+c k_m)^{\theta}$  as well as
\begin{equation}\label{equ:a_max}
	a_{\max} := \sup_{y\in\vGamma} \sup_{\vx\in D} |a(\vx,\vy)| < \infty.
\end{equation}
By using the \rev{triangle} inequality, Proposition \ref{propo:Delta_vi} and Proposition \ref{propo:S_Tk} we deduce 
\begin{align*}
	\left\|\Delta_\vk (a \nabla u_\Lambda)\right\|_{L^p_\mu(\vGamma; L^2(D))}
	& = 	\left\| \sum_{\vj \geq [\vk -\bs 1]_+} (\nabla t_\vj)\ \Delta_\vk \left[a S_{\Lambda} T_\vj \right] \right\|_{L^p_\mu(\vGamma; L^2(D))}\\
	& \leq \sum_{\vj \geq [\vk -\bs 1]_+} \|(\nabla t_\vj)\|_{L^2(D)}\ \left\|\Delta_\vk \left[a S_{\Lambda} T_\vj \right]\right\|_{C(\vGamma; \bbR)}\\
	& \leq \sum_{\vj \geq [\vk -\bs 1]_+} \|t_\vj\|_{\mc H}\ \beta(\vk)\ \left\| a S_{\Lambda} T_\vj \right\|_{C(\vGamma; \bbR)}\\
	& \leq \sum_{\vj \geq [\vk -\bs 1]_+} \|t_\vj\|_{\mc H}\ \beta(\vk)\ a_{\max} \left\|S_{\Lambda} T_\vj \right\|_{C(\vGamma; \bbR)}\\
	& \leq a_{\max}\ \beta(\vk) \sum_{\vj \geq [\vk -\bs 1]_+} \|t_\vj\|_{\mc H}\ \gamma(\vj),
\end{align*}
where we set $\gamma(\vj) := \prod_{m = 1}^M (1+cj_m)^{1+\theta}$.
By the Cauchy--Schwarz inequality we obtain
\[
	\sum_{\vj \geq [\vk -\bs 1]_+} \|t_\vj\|_{\mc H}\ \gamma(\vj)
	\leq
	C_{u,\vrho}
	\left(\sum_{\vj \geq [\vk -\bs 1]_+} \vrho^{-2\vj}\ \gamma(\vj)^2 \right)^{1/2},
\]
with $\vrho$ as in Theorem \ref{cor:Bachmayr_finite} and $C_{u,\vrho}$ as in \eqref{equ:C_u_rho}.
We can then apply Proposition \ref{propo:tail_bound} to bound $\sum_{\vj \geq [\vk -\bs 1]_+} \vrho^{-2\vj}\ \gamma(\vj)^2$.
More specifically, Proposition \ref{propo:tail_bound} yields the existence of a constant $C_{\vrho, c, \theta}~<~\infty$ such that
it holds
\[
	\sum_{\vj \geq [\vk -\bs 1]_+} \vrho^{-2\vj}\ \gamma(\vj)^2
	\leq C_{\vrho, c, \theta} \ \vrho^{-2 [\vk -\bs 1]_+ }\ \gamma([\vk -\bs 1]_+ )^2
	\leq C_{\vrho, c, \theta} \left(\prod_{m=1}^M \rho^2_m \right) \vrho^{-2 \vk}\ \gamma(\vk)^2,
\]
since $\gamma$ is increasing and $\rho_m>1$ for each $m$.
Thus, for any $\vk \in \F$ we get
\begin{align*}
	\left\|\Delta_\vk (a \nabla u_\Lambda)\right\|_{L^p_\mu(\vGamma; L^2(D))}
	& \leq
	a_{\max}\ C_{u,\vrho}\ \beta(\vk)\ 
	C_{\vrho, c, \theta}^{1/2} \left(\prod_{m=1}^M \rho_m \right) \gamma(\vk)\ \vrho^{-\vk}.
\end{align*}
The statement follows with
\begin{equation}\label{eq:M-dependent-C}
	C := a_{\max}\ C_{u,\vrho}\ C_{\vrho, c, \theta}^{1/2} \left(\prod_{m=1}^M \rho_m \right),  
\end{equation}
since $g(\vk)= \beta(\vk) \gamma(\vk)\ \vrho^{-\vk}$. 
\end{proof}

This bound of the error indicators is now used to proceed with the second step of the proof to verify the second assumption of Theorem \ref{theo:abstract_adapt}.

\begin{lemma}\label{lem:eta_inf}
Given the assumptions of Theorem \ref{theo:GN_conv} 
we have for $\eta_\infty$ as in \eqref{equ:eta_inf} and $\widehat \eta_n$ as in \eqref{equ:eta_hat} that
\[
	\lim_{n\to \infty}
	\|\eta_\infty - \widehat \eta_n\|_{\ell^1(\F)}
	=
	0.
\]
\end{lemma}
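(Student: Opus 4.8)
The plan is to prove the claim by the dominated convergence theorem for series on the index set $\F$ (equipped with counting measure). For this I need two ingredients: pointwise convergence $\widehat\eta_n(\vk) \to \eta_\infty(\vk)$ for each fixed $\vk \in \F$, and a single $n$-independent summable majorant for $|\eta_\infty(\vk) - \widehat\eta_n(\vk)|$.

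The majorant is supplied almost directly by Lemma \ref{lem:eta_bound}. Since each $\Lambda_n$ is monotone and the constant $C$ there does \emph{not} depend on $\Lambda$, the definition of $\widehat\eta_n$ gives $\widehat\eta_n(\vk) \leq \eta(\vk, S_{\Lambda_n}u) \leq C\,g(\vk)$ for every $n$ and every $\vk$, with $g(\vk) = \big(\prod_{m=1}^M (1+ck_m)^{2\theta+1}\big)\vrho^{-\vk}$; by Lemma \ref{lem:cohen} (using $\rho_m > 1$) this $g$ lies in $\ell^q(\F) \subset \ell^1(\F)$ for any $q \in (0,1)$. Once pointwise convergence is established, the same bound $\eta_\infty(\vk) = \lim_n \widehat\eta_n(\vk) \leq C\,g(\vk)$ is inherited, so that $|\eta_\infty(\vk) - \widehat\eta_n(\vk)| \leq 2C\,g(\vk)$ with $2C g \in \ell^1(\F)$; this neatly avoids having to apply Lemma \ref{lem:eta_bound} directly to the possibly infinite set $\Lambda_\infty$.

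For the pointwise convergence I fix $\vk$ and split into three cases according to the position of $\vk$ relative to $\Lambda_\infty$ and its margin. If $\vk \in \Lambda_\infty$, then $\vk \in \Lambda_n$ for all large $n$; if $\vk \in \Marg{\Lambda_\infty} \setminus \Lambda_\infty$, then choosing $m$ with $\vk - \ve_m \in \Lambda_\infty$ gives $\vk - \ve_m \in \Lambda_n$ and $\vk \notin \Lambda_n$ for all large $n$, i.e.\ $\vk \in \Marg{\Lambda_n} = \Cand_n$; in both cases $\widehat\eta_n(\vk) = \eta(\vk, S_{\Lambda_n}u)$ eventually. If instead $\vk \notin \Lambda_\infty \cup \Marg{\Lambda_\infty}$, then $\eta_\infty(\vk) = 0$ and, since $\vk \in \Marg{\Lambda_n}$ would force $\vk \in \Marg{\Lambda_\infty}$, one checks $\widehat\eta_n(\vk) = 0$ for every $n$, yielding the trivial limit. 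In the two nontrivial cases I invoke Lemma \ref{lem:u_inf} ($u_n \to u_\infty$ in $C(\vGamma; H_0^1(D))$) together with continuity of the functional $v \mapsto \|\Delta_\vk(a\nabla v)\|_{L^p_\mu(\vGamma; L^2(D))}$ on $C(\vGamma; H_0^1(D))$: this functional is continuous because $v \mapsto a\nabla v$ maps $C(\vGamma; H_0^1(D))$ boundedly into $C(\vGamma; L^2(D)^d)$ (using $a_{\max} < \infty$ from \eqref{equ:a_max}), $\Delta_\vk$ is bounded on $C(\vGamma; L^2(D)^d)$ with norm $\|\Delta_\vk\|_\infty$ (Proposition \ref{propo:Delta_vi}), and $C(\vGamma; \cdot)$ embeds continuously into $L^p_\mu(\vGamma; \cdot)$. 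Hence $\widehat\eta_n(\vk) \to \eta(\vk, u_\infty) = \eta_\infty(\vk)$ in all three cases.

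The main obstacle is the bookkeeping in this case analysis — pinning down, for each fixed $\vk$, the index beyond which the membership of $\vk$ (in $\Lambda_n$, in $\Cand_n$, or in neither) stabilizes and matches the status recorded by $\eta_\infty$ — rather than anything analytically deep, since the genuine analytic content (a uniform summable majorant and continuity of the error functional) is already delivered by Lemmas \ref{lem:eta_bound} and \ref{lem:u_inf} and by the uniformity of $C$ in $\Lambda$. With pointwise convergence and the majorant $2C g \in \ell^1(\F)$ in hand, the dominated convergence theorem for series gives $\lim_{n\to\infty} \sum_{\vk \in \F} |\eta_\infty(\vk) - \widehat\eta_n(\vk)| = 0$, which is exactly the assertion $\|\eta_\infty - \widehat\eta_n\|_{\ell^1(\F)} \to 0$.
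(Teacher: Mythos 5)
Your proof is correct, and it takes a genuinely different (if closely related) route from the paper's. The paper never establishes pointwise convergence of $\widehat\eta_n(\vk)$: instead it bounds $|\eta_\infty(\vk)-\widehat\eta_n(\vk)|$ casewise and splits the $\ell^1$-norm into two terms, a sum over $\Lambda_\infty^+:=\Lambda_\infty\cup\Marg{\Lambda_\infty}$ of $\|\Delta_\vk(a\nabla(u_\infty-u_n))\|_{L^p_\mu(\vGamma;L^2(D))}$, treated by dominated convergence, and a tail sum over $\Lambda_\infty^+\setminus(\Lambda_n\cup\Cand_n)$ of $\|\Delta_\vk(a\nabla u_\infty)\|_{L^p_\mu(\vGamma;L^2(D))}$, treated by summability together with the exhaustion $\Lambda_\infty^+\subseteq\bigcup_n(\Lambda_n\cup\Cand_n)$. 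You apply dominated convergence once, directly to $|\eta_\infty-\widehat\eta_n|$ over all of $\F$; this forces the membership-stabilization bookkeeping of your three cases, which the paper's split avoids, but in exchange it absorbs the paper's second term automatically, since for $\vk\in\Lambda_\infty^+$ not yet in $\Lambda_n\cup\Cand_n$ the discrepancy vanishes pointwise once $\vk$ enters the candidate set. The analytic inputs are identical in both arguments: the $\Lambda$-uniform majorant $C\,g(\vk)$ from Lemma \ref{lem:eta_bound} (summable by Lemma \ref{lem:cohen}) and the per-index estimate $\|\Delta_\vk(a\nabla v)\|_{L^p_\mu(\vGamma;L^2(D))}\leq\|\Delta_\vk\|_\infty\, a_{\max}\,\|v\|_{C(\vGamma;H_0^1(D))}$ combined with Lemma \ref{lem:u_inf}. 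One point where your version is in fact slightly more careful: the paper invokes Lemma \ref{lem:eta_bound} directly for $u_\infty=S_{\Lambda_\infty}u$, even though $\Lambda_\infty$ may be infinite and the lemma is proved for $S_\Lambda u$; you instead obtain $\eta_\infty(\vk)\leq C\,g(\vk)$ as the pointwise limit of the uniform bounds on $\widehat\eta_n(\vk)$, which is exactly the limiting argument needed to justify that step. A minor caveat you share with the paper: Proposition \ref{propo:Delta_vi} is stated for scalar-valued $f$, while both proofs apply the operator-norm bound to $L^2(D)$-valued functions; this is harmless, since $\Delta_\vk$ acts through point evaluations and scalar polynomial weights, but deserves a word of acknowledgment.
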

\begin{proof}
We introduce the short-hand notation
\[
	\Lambda^+ := \Lambda \cup \Marg{\Lambda},
	\qquad \Lambda \subseteq \F,
\]
and notice that consequently $\Lambda_\infty^+ \subseteq \bigcup_{n\in\bbN} \Lambda_n^+$. 
Moreover, we have
\[
	\left| \eta_\infty(\vk) - \widehat \eta_n(\vk) \right|
	\leq
	\begin{cases}
	\|\Delta_\vk (a \nabla (u_\infty - u_n))\|_{L^p_\mu(\vGamma; L^2(D))}, & \vk \in \Lambda^+_n \subset \Lambda^+_\infty,\\
	\|\Delta_\vk (a \nabla u_\infty)\|_{L^p_\mu(\vGamma; L^2(D))}, & \vk \in \Lambda^+_\infty \setminus \Lambda^+_n,\\
	0, & \vk \in  \mathcal{F}  \setminus \Lambda^+_\infty.
	\end{cases}
\]
Hence,
\begin{align*}
  \|\eta_\infty - \widehat \eta_n\|_{\ell^1(\F)}
  \leq \,\,
  & \underbrace{\sum_{\vk \in \Lambda_\infty^+} \|\Delta_\vk (a \nabla (u_\infty - u_n))\|_{L^p_\mu(\vGamma; L^2(D))}}_{\mbox{term I}} 
  \,\, + \,\,
  \underbrace{\sum_{\vk \in \Lambda_\infty^+\setminus \Lambda_n^+}  \|\Delta_\vk (a \nabla u_\infty )\|_{L^p_\mu(\vGamma; L^2(D))}}_{\mbox{term II}}.  
\end{align*}
We would like to take the limit on both sides, and verify that the two terms on the right-hand side tend to zero, which we analyze separately in the following.

\paragraph{Term I}
Assuming for a moment that we can apply the dominated convergence theorem to exchange the sum and the limit, we would get
\begin{align*}
  & \lim_{n\to\infty} \sum_{\vk \in \Lambda_\infty^+} \|\Delta_\vk (a \nabla (u_\infty - u_n))\|_{L^p_\mu(\Gamma; L^2(D))} \\
  &  =   \sum_{\vk \in \Lambda_\infty^+} \lim_{n\to\infty} \|\Delta_\vk (a \nabla (u_\infty - u_n))\|_{L^p_\mu(\Gamma; L^2(D))}
  && \mbox{by dominated convergence}\\
  & \leq \sum_{\vk \in \Lambda_\infty^+} \lim_{n\to\infty} \beta(\vk) \|a \nabla (u_\infty - u_n)\|_{C(\vGamma; L^2(D))}
  &&  \mbox{\rev{by Pr. \ref{propo:Delta_vi}}, $\beta(\vk) := \prod_{m = 1}^M (1+c k_m)^{\theta}$} \\
  & \leq \sum_{\vk \in \Lambda_\infty^+} \lim_{n\to\infty}\beta(\vk) a_{\max} \|u_\infty - u_n\|_{C(\vGamma; H_0^1(D))}
  && \mbox{recalling the \rev{def.} of $a_{\max}$ in \eqref{equ:a_max}} \\
  & = 0
  && \mbox{by Lemma~\ref{lem:u_inf}.}
\end{align*}
In order to apply Lebesgue's dominated convergence, we need to check that there exists a function $g\colon \F \to [0,\infty)$ such that, for all $n\in\bbN$ and $\vk\in\Lambda_\infty^+$,
\begin{equation}\label{equ:bound_inf_2}
	\|\Delta_\vk (a \nabla u_\infty )\|_{L^p_\mu(\Gamma; L^2(D))}
	+
	\|\Delta_\vk (a \nabla u_n)\|_{L^p_\mu(\Gamma; L^2(D))}
	\leq 
	g(\vk) 
	\quad
	\text{ and }
	\quad
	\sum_{\vk \in \Lambda_\infty^+}
	g(\vk)
	< \infty.
\end{equation}
The bounding function $g$ is obtained by Lemma \ref{lem:eta_bound}: there exists a constant $C<\infty$ such that
\[
	\|\Delta_\vk (a \nabla u_\infty )\|_{L^p_\mu(\Gamma; L^2(D))}
	+
	\|\Delta_\vk (a \nabla u_n)\|_{L^p_\mu(\Gamma; L^2(D))}
	\leq 
	2C\ g(\vk),
\]
with
\[
	g(\vk)
	:=
	\left( \prod_{m=1}^M (1+c k_m)^{2\theta+1} \right) \ \vrho^{-\vk}.
\] 
The required summability of $g$ is derived by Lemma \ref{lem:cohen}, i.e., 
\[
	\sum_{\vk \in \Lambda_\infty^+}
	2C\ g(\vk)
	\leq 
	2C \sum_{\vk \in \F}
	\left( \prod_{m=1}^M (1+c k_m)^{2\theta+1} \right) \ \vrho^{-\vk}	
	< \infty.
\]

\paragraph{Term II}
To verify that the limit of the second term is also zero, observe that the dominated convergence theorem in \eqref{equ:bound_inf_2} implies
\[
	\sum_{\vk \in \Lambda_\infty^+}
	\|\Delta_\vk (a \nabla u_\infty )\|_{L^p_\mu(\vGamma; L^2(D))}.
	< \infty
\]
Together with the fact that $\Lambda_\infty^+ \subseteq \bigcup_{n\in\bbN} \Lambda_n^+$, 
this implies the final result
\[
	\lim_{n\to \infty}
	\sum_{\vk \in \Lambda_\infty^+\setminus \Lambda_n^+}
	\|\Delta_\vk (a \nabla u_\infty )\|_{L^p_\mu(\Gamma; L^2(D))}
	=
	0. 
\]
\end{proof}

By Lemma~\ref{lem:eta_inf}, the three assumptions of Theorem~\ref{theo:abstract_adapt} have been verified, proving convergence of the described adaptive algorithm.

\subsection{Proof of Theorem \ref{theo:GG_alg}} \label{sec:convergence proof_GG}
\begin{proof}
Again we prove the assertion by applying Theorem \ref{theo:abstract_adapt}, i.e., verifying the three assumptions of Theorem \ref{theo:abstract_adapt}.
The first holds by assumption and the third by construction of Algorithm \ref{alg:GGadap}, cf. Remark \ref{rem:abstract_theorem}.
Thus, it remains again to verify the second assumption of Theorem \ref{theo:abstract_adapt}.
We set
\[
	\Lambda_n^+ := \Lambda_n \cup \Cand_n = \Lambda_n \cup \RMarg{\Lambda_n}
\]
as well as
\begin{equation}\label{equ:eta_hat_GG}
	\widehat \eta_n(\vk) 
	:= 
	\begin{cases}
	\|\Delta_\vk u \|_{L^p_\mu(\vGamma; \mc H)}, & \vk \in \Lambda^+_n\\
	0, & \text{ otherwise},
	\end{cases}
\end{equation}
and define
%
\begin{equation}\label{equ:eta_inf_GG}
	\eta_\infty(\vk) 
	:= 
	\begin{cases}
	\|\Delta_\vk u \|_{L^p_\mu(\vGamma; \mc H)}, & \vk \in \Lambda^+_\infty,\\
	0, & \text{ otherwise,}
	\end{cases},
	\qquad
	\Lambda^+_\infty := \bigcup_{n\in\bbN} \Lambda^+_n.
\end{equation}
We verify in Lemma \ref{lem:GG_aux} below (which is similar to Lemmas \ref{lem:eta_bound} and \ref{lem:eta_inf}) that
\[
	\lim_{n\to\infty} \|\eta_\infty - \widehat \eta_n \|_{\ell^1} = 0,
\]
which concludes the proof.
\end{proof}
\begin{lemma}\label{lem:GG_aux}
Let the assumptions of Theorem \ref{theo:GG_alg} be satisfied. Then, there exists a constant $C< \infty$ such that for any $\vk\in\F$
\begin{equation}\label{equ:surplus_bound}
	\|\Delta_\vk u \|_{L^p_\mu(\vGamma; \mc H)}
	\leq
	C\ g(\vk), \qquad
	g(\vk)
	:=
	\left( \prod_{m=1}^M (1+\widetilde c k_m)^{\widetilde \theta}\,(1+c k_m)^{\theta} \right) \ \vrho^{-\vk}.
\end{equation}
Moreover, we have $(\eta_\infty(\vk))_{\vk\in\F} \in \ell^1(\F)$ for $\eta_\infty(\vk)$ as given in \eqref{equ:eta_inf_GG} and, therefore, for $\widehat \eta_n$ as in \eqref{equ:eta_hat_GG}
\[
	\lim_{n\to\infty} \|\eta_\infty - \widehat \eta_n \|_{\ell^1} = 0.
\]
\end{lemma}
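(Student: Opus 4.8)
The plan is to apply Theorem~\ref{theo:abstract_adapt} exactly as in the proof of Theorem~\ref{theo:GN_conv}, the only outstanding task being the verification of its second assumption, which Lemma~\ref{lem:GG_aux} encapsulates. The lemma splits naturally into two parts: the pointwise surplus bound \eqref{equ:surplus_bound}, which is the analogue of Lemma~\ref{lem:eta_bound} but considerably simpler because the indicator $\|\Delta_\vk u\|_{L^p_\mu(\vGamma;\mc H)}$ refers to the fixed solution $u$ rather than the running iterate $u_n$ and carries no diffusion factor $a$; and the $\ell^1$-convergence $\|\eta_\infty - \widehat\eta_n\|_{\ell^1}\to 0$, which will follow from dominated convergence over $\F$.

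For the bound \eqref{equ:surplus_bound}, I would first observe that the weighted summability in assumption~1 of Theorem~\ref{theo:GG_alg} together with the growth bound \eqref{equ:poly_cond} forces the expansion $u=\sum_{\vj\in\F}u_\vj P_\vj$ to converge \emph{absolutely} in $C(\vGamma;\mc H)$: indeed, $\sum_\vj\|u_\vj\|_{\mc H}\|P_\vj\|_{C(\vGamma;\bbR)}\le\sum_\vj\|u_\vj\|_{\mc H}\prod_{m}(1+\widetilde c j_m)^{\widetilde\theta}$ is finite by Cauchy--Schwarz against the weight $\vrho^{\vj}$ and Lemma~\ref{lem:cohen}. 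Hence the continuous operator $\Delta_\vk$ may be applied termwise, $\Delta_\vk u=\sum_\vj u_\vj\,\Delta_\vk P_\vj$. The key structural observation is then that $P_{j_m}\in\mc P_{j_m}(\Gamma;\bbR)$, so by \eqref{eq:Delta_P} we have $\Delta_{k_m}P_{j_m}=0$ whenever $k_m>\m^{-1}(j_m)$; since $\m^{-1}(j_m)\le j_m$ by \eqref{eq:m_inverse}, this forces $\Delta_\vk P_\vj=0$ unless $\vj\ge\vk$ componentwise. This collapses the series to a tail over $\{\vj\ge\vk\}$.

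I would then estimate the tail. Using $\|\cdot\|_{L^p_\mu}\le\|\cdot\|_{C}$ (since $\mu$ is a probability measure), Proposition~\ref{propo:Delta_vi} to factor $\|\Delta_\vk\|_\infty\le\prod_m(1+ck_m)^\theta=:\beta(\vk)$, and the growth bound \eqref{equ:poly_cond} for $\|P_\vj\|_{C(\vGamma;\bbR)}\le\prod_m(1+\widetilde c j_m)^{\widetilde\theta}=:\widetilde\beta(\vj)$, I obtain $\|\Delta_\vk u\|_{L^p_\mu}\le\beta(\vk)\sum_{\vj\ge\vk}\|u_\vj\|_{\mc H}\,\widetilde\beta(\vj)$. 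Writing $\|u_\vj\|_{\mc H}\widetilde\beta(\vj)=(\vrho^\vj\|u_\vj\|_{\mc H})(\vrho^{-\vj}\widetilde\beta(\vj))$ and applying Cauchy--Schwarz, the first factor sums to the finite constant $(\sum_\vj\vrho^{2\vj}\|u_\vj\|_{\mc H}^2)^{1/2}$ supplied by assumption~1, while the tail $\sum_{\vj\ge\vk}\vrho^{-2\vj}\widetilde\beta(\vj)^2$ is controlled by Proposition~\ref{propo:tail_bound} (with algebraic factor $\widetilde\beta^2$ and weight $\vrho^2$) by a constant multiple of $\vrho^{-2\vk}\widetilde\beta(\vk)^2$. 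Collecting the factor $\beta(\vk)\widetilde\beta(\vk)\vrho^{-\vk}$ yields exactly $g(\vk)$ in \eqref{equ:surplus_bound}.

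Finally, for the $\ell^1$-convergence I would argue as follows. By \eqref{equ:surplus_bound} and Lemma~\ref{lem:cohen}, $\sum_{\vk\in\F}g(\vk)<\infty$, so $(\eta_\infty(\vk))_{\vk\in\F}\in\ell^1(\F)$. Because each index, once it enters some $\Lambda_n^+=\Lambda_n\cup\RMarg{\Lambda_n}$, remains in every subsequent $\Lambda_m^+$ (an index already placed in $\Lambda_n$ stays there, while an index of $\RMarg{\Lambda_n}$ not yet absorbed retains all its backward neighbours and hence stays in the reduced margin), the sets $\Lambda_n^+$ are nested with union $\Lambda_\infty^+$. Consequently $\widehat\eta_n\le\eta_\infty$ pointwise with $\widehat\eta_n(\vk)\to\eta_\infty(\vk)$ for every $\vk$, so $\|\eta_\infty-\widehat\eta_n\|_{\ell^1}=\sum_{\vk\in\Lambda_\infty^+\setminus\Lambda_n^+}\eta_\infty(\vk)\to0$ as the tail of a convergent non-negative series. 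The main (and essentially only) obstacle is the bookkeeping in the first part — correctly pinning down the support $\{\vj\ge\vk\}$ and checking that $\widetilde\beta^2$ qualifies as an admissible algebraic factor for Proposition~\ref{propo:tail_bound}; once these are in place the estimate is a direct chaining of the auxiliary results, which is why this proof is markedly shorter than that of Theorem~\ref{theo:GN_conv}.
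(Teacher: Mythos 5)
Your proof is correct and follows essentially the same route as the paper's: termwise application of $\Delta_\vk$ to the polynomial expansion, the support restriction $\vj \geq \vk$ via \eqref{eq:Delta_P}, Cauchy--Schwarz against the weight $\vrho^{\vj}$, the tail bound of Proposition \ref{propo:tail_bound}, summability from Lemma \ref{lem:cohen}, and the tail-of-a-convergent-series argument for the $\ell^1$-limit (your only deviation, applying the operator-norm bound before Cauchy--Schwarz rather than after, is immaterial). In fact you supply two small justifications the paper leaves implicit --- absolute convergence of the expansion in $C(\vGamma;\mc H)$ to legitimize termwise application of $\Delta_\vk$, and the nestedness of the sets $\Lambda_n^+ = \Lambda_n \cup \RMarg{\Lambda_n}$ --- both of which are correct and welcome.
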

\begin{proof}
In the following we denote the norm in $L^p_\mu(\vGamma; \mc H)$ and $C(\vGamma;\mc H)$ simply by $\|\cdot\|_{L^p}$ and $\|\cdot\|_C$, respectively.
By employing the polynomial expansion of $u$ and the Cauchy--Schwarz inequality, we obtain
\begin{align*}
	\|\Delta_\vk u\|_{L^p}
	& = \left\| \sum_{\vi \in \F} u_\vi \Delta_\vk P_\vi \right\|_{L^p}
	\leq \sum_{\vi \in \F} \|u_\vi\|_{\mc H} \left\|\Delta_\vk P_\vi\right\|_{L^p}\\
	& \leq \left( \sum_{\vi \in \F} \vrho^{2\vi}\ \|u_\vi\|^2_{\mc H} \right)^{1/2} \  \left( \sum_{\vi \in \F}
	\vrho^{-2\vi}\ \left\|\Delta_\vk P_\vi\right\|_{L^p}^2\right)^{1/2},
\end{align*}
where $\vrho \in \bbR^M$ is as assumed in Theorem \ref{theo:GG_alg}.
By assumption the first term is bounded by a constant
\[
	C_{u,\vrho} := \left( \sum_{\vi \in \F} \vrho^{2\vi}\ \|u_\vi\|^2_{\mc H} \right)^{1/2} < \infty.
\]
Concerning the second term, we first note that
\[
	\Delta_\vk P_\vi
	=
	\prod_{m=1}^M \Delta_{k_m} P_{i_m}
	\equiv 0
	\qquad
	\text{ if } \exists m\colon i_m \leq \m(k_m-1).
\]
Hence, we require $\vi \geq \m(\vk - \bs 1) + \bs 1 \geq \vk$ for $\Delta_\vk P_\vi \not\equiv 0$ and therefore obtain by Proposition \ref{propo:Delta_vi} and the assumption \eqref{equ:poly_cond}
\begin{align*}
	\sum_{\vk \in \F} \vrho^{-2\vk}\ \left\|\Delta_\vk P_\vi\right\|_{L^p}^2
	& = \sum_{\vi \geq \vk} \vrho^{-2\vi}\ \left\|\Delta_\vk P_\vi\right\|_{L^p}^2
	\leq \sum_{\vi \geq \vk} \vrho^{-2\vi}\ \left\|\Delta_\vk P_\vi\right\|_{C}^2\\
	& \leq \sum_{\vi \geq \vk} \vrho^{-2\vi}\ \left(\prod_{m=1}^M (1 + c k_m)^\theta\right) \|P_\vi\|_{C(\vGamma;\bbR)}\\
	& \leq \gamma(\vk) \sum_{\vi \geq \vk} \vrho^{-2\vi} \ \beta(\vi)^2,
\end{align*}
with
\[
	\beta(\vi)
	:=
	\prod_{m=1}^M (1 + \widetilde c i_m)^{\widetilde \theta},
	\qquad
	\gamma(\vk)
	:=
	\prod_{m=1}^M (1 + c k_m)^\theta.
\]
Hence, by Proposition \ref{propo:tail_bound} we have for a finite constant $C$
\[
	\sum_{\vi \geq \vk} \vrho^{-2\vi} \ \beta(\vi)^2
	\leq
	C \vrho^{-2\vk} \ \beta(\vk)^2
\]
and, thus,
\[
	\|\Delta_\vk u\|_{L^p}
	\leq
	C_{u,\vrho} \,
	C^{1/2} \
	\gamma(\vk) \,
	 \beta(\vk) \,
	 \vrho^{-\vk}
	 \qquad
	 \vk \in \F,
\]
which proves \eqref{equ:surplus_bound}.
Moreover, by Lemma \ref{lem:cohen} we know that $(g(\vk))_{\vk \in \mc \F} \in \ell^1(\F)$, and hence,
also $(\widehat \eta_n(\vk))_{\vk\in\F}, \ (\eta_\infty(\vk))_{\vk\in\F} \in \ell^1(\F)$, $n\in\bbN$.
Finally, we have by definition of $\eta_\infty$ and $\widehat \eta_n$ that
\begin{align*}
	\|\eta_\infty - \widehat \eta_n\|_{\ell^1}
	& 
	= \sum_{\vk \in \Lambda_\infty^+ \setminus \Lambda_n^+} \|\Delta_\vk u\|_{L^p}
	\leq 	C_{u,\vrho} C^{1/2}\sum_{\vk \in \Lambda_\infty^+ \setminus \Lambda_n^+} g(\vk).
\end{align*}
The summability $(g(\vk))_{\vk \in \mc \F} \in \ell^1(\F)$ and $\Lambda_\infty^+ = \bigcup_{n\in\bbN} \Lambda_n^+$ then yield the desired result
\[
	\lim_{n\to \infty}
	\|\eta_\infty - \widehat \eta_n\|_{\ell^1}
	\leq
	\lim_{n\to \infty}	
	\sum_{\vk \in \Lambda_\infty^+\setminus \Lambda_n^+}
	g(\vk)
	=
	0. 
\]
\end{proof}

\section{Conclusions}\label{sec:conclusions}

We have proved convergence of an adaptive sparse collocation algorithm for approximating the solution of an elliptic PDE with a high-dimensional parameter
\rev{$\vy \in [-1,1]^M$, } applying the analysis technique from \cite{BespalovEtAl2019}, developed for the stochastic Galerkin FEM, to a slight variation of the algorithm
proposed \rev{by Guignard and Nobile} in \cite{GuignardNobile2018}.
In this sense, our work can be seen as an extension of \cite{GuignardNobile2018}, where a very close variant of the algorithm considered here was presented and analyzed numerically, but without convergence proof. 

The algorithm\rev{s} we \rewrite{consider}{propose} here and \rev{that} in \cite{GuignardNobile2018} \rewrite{is}{are both} modifications of the well-known dimension-adaptive sparse grid algorithm of Gerstner and Griebel in that they replace the hierarchical surplus error indicators with a rigorous residual-based error estimator.
As a by-product of our analysis we also obtain a convergence proof for the Gerstner--Griebel algorithm applied to the same problem, under the assumption that the hierarchical surplus error indicator is also a reliable error estimator.
The convergence proof is tailored to the specific problem, i.e., an elliptic PDE with parametric diffusion coefficient depending affinely  on a finite number of parameters.
Because the algorithm is based on a residual-based error estimator, the analysis is problem-specific and
must be adapted for each new PDE as well as for different forms (e.g.\ nonlinear) of the random diffusion coefficient.
However, we expect that a large part of the machinery proves valid or at least extensible in a straightforward way.
Particularly, if reliable error estimators (for the approximation error w.r.t.~the  parameter variables) are available, \rewrite{simply}{only} a stability condition of these estimators w.r.t.~$u_n$ needs to be established in order to verify the crucial second condition of the general convergence Theorem~\ref{theo:abstract_adapt}.
Our analysis in Section~\ref{sec:convergence proof} can serve as a blueprint for doing so.

Regarding possible extensions of this work, we point out that the convergence analysis we have presented proves convergence but does not provide a rate. 
This might be achieved by a saturation assumption following again the line of proof in \cite{BespalovEtAl2019} for adaptive stochastic Galerkin FEM.
Conversely, the extension of the specific model problem to the important case of the diffusion coefficient resulting from the parametrization of a log-normal random field is deemed to be more challenging. 
Another important yet challenging addition to our work would be to extend the convergence result to
the infinite-dimensional case, i.e., to consider countably many parameters $M=\infty$ in the affine expansion of the diffusion coefficient \eqref{equ:affine-a}.
This would pose both theoretical and algorithmic challenges: on the theoretical side, our proof would need to be revisited since some constants are not bounded when $M \rightarrow \infty$ (in particular, the constant $C$ in Lemma \ref{lem:eta_bound}, cf. equation \eqref{eq:M-dependent-C}).
From the algorithmic point of view, having $M=\infty$ would lead to margin sets of infinite cardinality which is, of course, unfeasible. 
Under the assumption that $\| a_m \|_{L^\infty}$ in \eqref{equ:affine-a} are monotone decreasing (this assumption could be weakened),
  then a possible approach would be to implement a so-called ``buffering'' procedure, as discussed in \cite{GuignardNobile2018}
  (see also \cite{SchillingsSchwab2013,ChkifaEtAl2014,NobileEtAl2016,ErnstEtAl2018}): such an algorithm would start considering only the
  first $M_0 < \infty$ parameters, and any time a parameter is ``activated'' (i.e. a collocation point is added along that parameter dimension for the
  first time), the total number of considered parameters would increase by one, in such a way that there are always $M_0$ ``non-activated'' parameters.

A further interesting follow-up would be to carry out an extensive numerical study on a number of different PDEs for which finite element error estimators are available, and investigate numerically whether \rewrite{the algorithm}{Algorithm \ref{alg:asc}} consistently displays good performance (i.e., similar to the GG algorithm) for all the PDEs considered. 
Both these numerical investigations exceed the scope of this work and are left for future research.



\bibliographystyle{amsalpha}
\bibliography{literature}  
\end{document}